\newtheorem{thm}{Theorem}[section]
\newtheorem{cor}[thm]{Corollary}
\newtheorem{lemma}[thm]{Lemma}
\newtheorem{defn}[thm]{Definition}
\newtheorem{claim}[thm]{Claim}
\def\th@remark{%
  \normalfont % non-italic
  \thm@headfont{\bfseries} % bold "Remark"
  \thm@headpunct{.}%
  \thm@preskip\topsep
  \thm@postskip\thm@preskip
}
\theoremstyle{remark}
\newtheorem{rmk}{Remark}[section] 
\numberwithin{equation}{section}
\def\Pa{\mathbb{P}}
\newcommand{\db}{\boldsymbol{d}}
\newcommand{\sbb}{\boldsymbol{s}}
\newcommand{\tb}{\boldsymbol{t}}
\newcommand{\lb}{\boldsymbol{l}}
\newcommand{\hb}{\boldsymbol{h}}
\newcommand{\xb}{\boldsymbol{x}}
\newcommand{\yb}{\boldsymbol{y}}
\newcommand{\nb}{\boldsymbol{n}}
\newcommand{\ib}{\boldsymbol{i}}
\newcommand{\mb}{\boldsymbol{m}}
\newcommand{\zb}{\boldsymbol{z}}
\newcommand{\gb}{\boldsymbol{g}}
\newcommand{\zerob}{\mathbf{0}}
\newcommand{\Gs}{\mathcal{G}}
\newcommand{\Bs}{\mathcal{B}}
\newcommand{\N}{\mathbb{N}}
\newcommand{\R}{\mathbb{R}}
\newcommand{\ebar}{\overline{e}}
\newcommand{\emptygraph}{\boldsymbol{\emptyset}}
\newcommand\numberthis{\addtocounter{equation}{1}\tag{\theequation}}
\title{Subgraphs in Random Graphs with Specified Degrees and Forbidden Edges}
\author{
John Larkin\thanks{Mathematical Sciences Institute, Australian National University, Australia. Current Location: Institut f\"ur Informatik, Universit\"at Heidelberg, Germany. Email: \tt larkin@informatik.uni-heidelberg.de} \\
\small  \and 
Brendan D. McKay\thanks{School of Computing,
Australian National University. Email: \tt brendan.mckay@anu.edu.au} \\
\small \and  
Fang Tian \thanks{Department of Applied Mathematics,
Shanghai University of Finance and Economics. Email:\\
\tt tianf@mail.shufe.edu.cn} \\
\small }
\date{}
\begin{document}

\maketitle

\begin{abstract}
Let $G$ be a uniformly chosen simple (labelled) random graph with given degree sequence $\db$ and let $X,Y,L$ be edge-disjoint graphs on the same vertex set as $G$. We investigate the probability that $X \subseteq G$ and that $G \cap Y = \emptygraph$ both conditioned on the event $G \cap L = \emptygraph$. We improve upon known bounds of these probabilities and extend them to a wider range of degree sequences through a more precise edge switching argument.
Notably, a few vertices of linear degree are permitted
provided that the subgraph~$X$ does not have an edge
incident with them.
Further, the graph $L$ is permitted to contain many edges (we provide an example where $L$ is a spanning $r$-regular subgraph with $r = o(n)$).

We provide the same analysis when $G$ is a simple (labelled) bipartite random graph with a given degree sequence $(\sbb,\tb)$. Our work extends the results of Gao and Ohapkin (2023) and McKay (1981, 2010).
\end{abstract}

\section{Introduction}
Let $K_n$ denote the complete graph on the vertex set $W = \{w_1,w_2,\dots,w_n\}$. Graphs are defined with respect to their edge sets, for example $X \subseteq K_n$ denotes a graph on the vertex set $W$ sharing some (potentially all) edges of $K_n$. We call a subgraph of $K_{n}$ a \textit{generic} graph to distinguish it from the bipartite case. For vertices $u,v \in W$, we denote an edge by $uv \in K_n$. Given a graph $G \subseteq K_n$ and a vertex $w \in W$, the notation $d_w$ denotes the degree of vertex $w$ in $G$. Define $\db(G) \coloneq (d_{w_1},\dots,d_{w_n})$ as the degree sequence of $G$ on the vertex set $W$. Denote $\emptygraph\subseteq K_n$ as the graph with $n$ vertices and no edges and define the zero vector as $\zerob \coloneq (0,0,\dots,0)$, where the length will be clear from context.

Given an $n$-tuple of integers $\db = (d_{w_1},d_{w_2},\dots,d_{w_n})$ and graphs $X,Y \subseteq K_n$ such that $X \cap Y = \emptygraph$, define
\begin{equation*}
   \Gs_{\db}(X,Y) =  \{G \subseteq K_n \mid \db(G) = \db \text{, } X \subseteq G \text{ and } G \cap Y = \emptygraph\}
\end{equation*}
as the set of graphs with degree sequence $\db$ containing all the edges of $X$ and not containing any edge in $Y$. Throughout the paper we will assume $\Gs_{\db}(X,Y)$ is non-empty. We endow $\Gs_{\db}(X,Y)$ with the uniform probability measure and write $G \sim \Gs_{\db}(X,Y)$ to denote a random variable distributed according to $\Pa(G = G') = \frac{1}{|\Gs_{\db}(X,Y)|}$ for all $G' \in \Gs_{\db}(X,Y)$. In this case we call $G$ a random graph.

Let $X,Y,L \subseteq K_n$ be edge-disjoint graphs and $G \sim \Gs_{\db}(\emptygraph,L)$. The focus of this paper is to improve upon known bounds on both
\begin{equation*}
    \Pa(X \subseteq G) = \frac{|\Gs_{\db}(X,L)|}{|\Gs_{\db}(\emptygraph,L)|} \qquad \text{and} \qquad \Pa(Y \cap G = \emptygraph) = \frac{|\Gs_{\db}(\emptygraph,L \cup Y)|}{|\Gs_{\db}(\emptygraph,L)|}.
\end{equation*}
To avoid being repetitious, graphs will be denoted by uppercase Roman letters, for example $X_i,Y, L \subseteq K_n$, and their degree sequences $\xb_i,\yb,\lb$ by the corresponding lowercase bold letters $\xb_i = (x_{i,w_1},x_{i,w_2},\dots,x_{i,w_n})$, $\yb = (y_{w_1},\dots,y_{w_n})$ and $\lb = (l_{w_1},\dots,l_{w_n})$. If we want to specify the degree of a vertex $w \in W$, we use the notation $x_{i,w},y_w$ and $l_w$, respectively.

\begin{rmk}
    Let $H \subseteq X \subseteq K_n$ and $L \subseteq K_n$, where $X \cap L = \emptygraph$. For $G \sim \Gs_{\db}(H,L)$, $\Pa(X \subseteq G)$ is the probability that the edges of $X$ are present, given that the edges of $H$ are present and the edges of $L$ are not present. Observe that $|\Gs_{\db}(H,L)| = |\Gs_{\db-\hb}(\emptygraph,L\cup H)|$. Then letting $X' \coloneq X \setminus H$ and taking the random graph $G' \sim \Gs_{\db - \hb}(\emptygraph, L \cup H)$ one has $\Pa(X \subseteq G) = \Pa(X' \subseteq G')$. Therefore, we do not lose the ability to condition on a set of edges being present if we only consider random graphs of the form $G \sim \Gs_{\db}(\emptygraph,M)$ for $M \subseteq K_n$.
\end{rmk}

We repeat the above analysis in the case of random bipartite graphs. Let $K_{m,n}$ denote the complete bipartite graph on the vertex set $S \cup T$, where one part consists of vertices in $S \coloneq \{u_1,\dots,u_m\}$ and the other part contains vertices in $T = \{v_1,\dots,v_n\}$.
For $u \in S$ and $v \in T$, we denote an edge by $uv \in K_{m,n}$ where we always require the first entry (e.g. $u$) to be a vertex in $S$ and the second entry (e.g. $v$) to be a vertex in $T$. Let $\emptygraph \subseteq K_{m,n}$ denote the bipartite graph with no edges. Given a graph $G \subseteq K_{m,n}$ and vertices $u \in S$ and $v \in T$, we reserve the notation $s_u$ to denote the degree of vertex $u$, and $t_v$ to denote the degree of vertex $v$. Given $G \subseteq K_{m,n}$, let $\db(G) \coloneq (\sbb,\tb)$ denote its degree sequence, where $\sbb = (s_{u_1},\dots,s_{u_m})$ and $\tb = (t_{v_1},\dots,t_{v_n})$.

Given a degree sequence $(\sbb,\tb) = (s_{u_1},\dots,s_{u_m}, t_{v_1},\dots,t_{v_n})$ and graphs $X,Y \subseteq K_{m,n}$ such that $X \cap Y = \emptygraph$, define
\begin{equation*}
   \Bs_{(\sbb,\tb)}(X,Y) =  \{G \subseteq K_{m,n} \mid \db(G) = (\sbb,\tb) \text{, } X \subseteq G \text{ and } G \cap Y = \emptygraph\}
\end{equation*}
as the set of bipartite graphs $G$ with degree sequence $(\sbb,\tb)$, where the edges of $X$ are present and the edges of $Y$ are not present. The setup of the probability space is similar, so we omit the details. Write $G \sim \Bs_{(\sbb,\tb)}(X,Y)$ to denote a random variable uniformly taking values in $\Bs_{(\sbb,\tb)}(X,Y)$. Again, to avoid being repetitious, we will denote bipartite graphs by uppercase letters $X,H \subseteq K_{m,n}$ and their respective degree sequences by pairs $(\xb,\yb)$ and $(\hb,\ib)$, where $\xb = (x_{u_1},\dots,x_{u_m}),\hb = (h_{u_1},\dots,h_{u_m})$ are associated to the vertex set $S$\, and $\yb,\ib$ are similarly associated to the vertex set $T$.

\subsection{Outline of Paper}
We introduce more required notation and definitions in Section \ref{notation_and_defn_section} and highlight some previous work. Section \ref{section:degree_sum_function} introduces a simple function as well as some parameters appearing in the results. Section \ref{main_results_section} presents the improved probabilities associated with generic random graphs. We include probabilities of a single edge being present and multiple edges all being present or forbidden. All results can further condition on a set of edges being forbidden. Section \ref{bipartite_random_graphs_section} introduces parallel results in the bipartite case. Sections \ref{generic_case_proofs} and \ref{bipartite_proofs_section} present the edge switching proofs for the generic and bipartite cases, respectively. Section \ref{section:example_calculation} provides some applications of the results.

\section{Notation and Definitions}\label{notation_and_defn_section}

If $G \subseteq K_n$ (resp. $G \subseteq K_{m,n}$) has degree sequence $\db$ (resp. $(\sbb,\tb)$), then write $m(G) \coloneq \frac{1}{2}\sum_{w \in W} d_w$ (resp. $m(G) \coloneq \sum_{u \in S} s_u = \sum_{v\in T} t_v$). We use this notation even when $G$ is a random graph since the degree sequence is always known. 

Let $\gb = (g_{w_1},\ldots,g_{w_k})$ be a list of vertex degrees and $A = \{w_1,\dots,w_k\}$ the corresponding vertex set. $\varDelta(\gb)$ denotes the maximum degree in $\gb$. If $X \subseteq K_{n}$ or $X \subseteq K_{m,n}$, we define $\partial X$ as the set of vertices belonging to at least one edge of $X$. If $A \cap \partial X \neq \emptyset$, then define
\begin{equation*}
    \varDelta_{\partial X}(\gb) \coloneq \max\{g_u \mid u \in A \cap \partial X \}.
\end{equation*}
For example, if $X$ is a perfect matching in $K_{m,n}$, then $\partial X = S \cup T$ and $\varDelta_{\partial X}(\sbb) \coloneq \varDelta(\sbb)$, where $\sbb$ is a degree sequence defined on the vertex set $S$. Finally, $[x]_i \coloneq \prod_{j=0}^{i-1} (x - j)$ denotes the falling factorial with $[x]_0=1$.

We will make use of standard asymptotic notation.
% and use definitions directly from \cite{gao22}.
For $\{a_i\}_{i=1}^\infty ,\{b_i\}_{i=1}^\infty$ sequences of real numbers, write $a_i = O(b_i)$ if there exists a constant $C > 0$ such that for all $i$, $|a_i| \leq C\cdot  |b_i|$. Write $a_i= o(b_i)$ if eventually $b_i > 0$ and $\lim_{i \to \infty} \frac{a_i}{b_i}$ exists and equals $0$. Therefore, $a_i = b_i(1+o(1))$ means $\lim_{i\to\infty} \frac{a_i}{b_i} = 1$. Denote $a_i = \Omega(b_i)$ if eventually $a_i > 0$ and $b_i = O(a_i)$. Further, $a_i = \Theta(b_i)$ if both $a_i = O(b_i)$ and $a_i = \Omega(b_i)$. We often apply the above in the context of probabilities of events associated to a sequence of random graphs $(G_k)_{k \in \N}$, where each $G_k \sim \Gs_{\db_k}(X_k,Y_k)$ and $\db_k,X_k,Y_k$ depend on $k$. The behaviour of these parameters in relation to a function $\rho:\N \to (0,1)$ will govern the asymptotic probabilities of these events. To ease notation, we will not always write the subscript $k$ (for example in Corollary \ref{multiple_edge_cor}).

\subsection{Previous Work}
A fruitful approach to estimating $\Pa(X \subseteq G)$ for $G \sim \Gs_{\db}(\emptygraph,L)$ was discovered by McKay in \cite{mckay81} and is commonly referred to as the `method of switchings'. The idea is to define a relation between two non-empty finite sets $A$ and $B$ in order to estimate the ratio $\frac{|A|}{|B|}$. In \cite{mckay81}, a 2-switching (see Figure \ref{2_switch}) was used to estimate the ratio $\frac{|\Gs_{\db}(H + uv,L)|}{|\Gs_{\db}(H,L+uv)|}$. Further calculations then give the probability $\Pa(X \subseteq G)$.

Later, Gao and Ohapkin \cite{gao22} used a  3-switching (see Figure \ref{3_switch}) to bound the same ratio, resulting in the following theorem. Their switching argument relies on the parameter $D(\Delta)$, which is the sum of the $\Delta \coloneq \varDelta(\db)$ largest degrees in $\db$.
\begin{thm}[Gao and Ohapkin, Theorem 4 \cite{gao22}]
    Let $X,L \subseteq K_n$ with $X \cap L = \emptygraph$. Let $G \sim \Gs_{\db}(\emptygraph,L)$.
\begin{description}
    \item[\textbf{Upper Bound:}] If $R_1(\db,X,L) \leq 1$, then
    \begin{equation*}
        \Pa(X \subseteq G) \leq \frac{\prod_{i \in W} [d_i]_{x_i}}{2^{m(X)}[m(G)]_{m(X)}} \left(1 + R_1(\db,X,L) \right)^{m(X)},
    \end{equation*}
    where
    \begin{equation*} % \label{gao:upper_error_term}
        R_1(\db,X,L) \coloneq \frac{6D(\Delta)+2\Delta(8+2\Delta(\lb))}{2m(G) - 2m(X)}+ \frac{4m(L)\Delta^2}{(2m(G) - 2m(X))^2}.
    \end{equation*}
    \item[\textbf{Lower Bound:}] If $R_2(\db,X,L) \leq 1$, then
    \begin{equation*}
        \Pa(X \subseteq G) \geq \frac{\prod_{i \in W} [d_i]_{x_i}}{2^{m(X)}[m(G)]_{m(X)}} \left(1 - R_2(\db,X,L) \right)^{m(X)},
    \end{equation*}
    where 
    \begin{equation*} % \label{gao:lower_error_term}
        R_2(\db,X,L) \coloneq \frac{2D(\Delta)+6\Delta + 2\Delta(\lb)\Delta + \Delta_{\partial X}(\db)^2}{2m(G) - 2m(X)}.
    \end{equation*}   
\end{description}
\end{thm}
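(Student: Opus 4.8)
The plan is to grow $X$ one edge at a time, so that the estimate becomes a product of single-edge ratios, each controlled by a switching. Fix an ordering $e_1,\dots,e_{m(X)}$ of the edges of $X$, set $X_j \coloneq \{e_1,\dots,e_j\}$ and $X_0 \coloneq \emptygraph$, and write $x_{j,w}$ for the degree of a vertex $w$ in $X_j$, so that $m(X_j)=j$. Since $X\cap L = \emptygraph$, the family $\Gs_{\db}(X_{j-1},L)$ is the disjoint union of those $G$ containing $e_j$ (this is $\Gs_{\db}(X_j,L)$) and those avoiding $e_j$ (this is $\Gs_{\db}(X_{j-1},L+e_j)$), so
\[
   \Pa(X\subseteq G) = \prod_{j=1}^{m(X)} \frac{|\Gs_{\db}(X_j,L)|}{|\Gs_{\db}(X_{j-1},L)|}
   = \prod_{j=1}^{m(X)} (1+\beta_j)^{-1}, \qquad
   \beta_j \coloneq \frac{|\Gs_{\db}(X_{j-1},L+e_j)|}{|\Gs_{\db}(X_j,L)|}.
\]
It then suffices to trap each $\beta_j$ between upper and lower bounds of the shape $\frac{2(m(G)-j+1)}{(d_u-x_{j-1,u})(d_v-x_{j-1,v})}(1\pm\text{error})$, where $e_j=uv$ and $m(X_{j-1})=j-1$, and to multiply the resulting per-step estimates.

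To estimate $\beta_j = |B_j|/|A_j|$ with $A_j \coloneq \Gs_{\db}(X_j,L)$ and $B_j \coloneq \Gs_{\db}(X_{j-1},L+e_j)$, I would use a $3$-switching (Figure~\ref{3_switch}): starting from $G\in B_j$, which is missing the edge $uv$, delete three suitably chosen edges of $G$ and insert three new ones, one of them $uv$, keeping every degree fixed, so as to land in $A_j$. The operation is legal precisely when none of the deleted edges lies in $X_{j-1}$, none of the inserted edges lies in $L$ or is already present in $G$, and the participating vertices are appropriately distinct. The reverse operation, applied to $G'\in A_j$, deletes $uv$ together with two further edges and reinserts three edges that miss $uv$. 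Letting $f(G)$ and $r(G')$ count forward and reverse switchings, the double-count identity $\sum_{G\in B_j} f(G) = \sum_{G'\in A_j} r(G')$ delivers $\beta_j$ once $f$ and $r$ are estimated uniformly from above and below.

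The forward count from $G\in B_j$ has leading term $(d_u-x_{j-1,u})(d_v-x_{j-1,v})$ — the choices of the free neighbours deleted at $u$ and at $v$ — times $2(m(G)-j+1)$, the number of oriented edges of $G$ available for the third deleted edge; the reverse count from $G'\in A_j$ has leading term $\bigl(2(m(G)-j+1)\bigr)^2$, the choices of the two deleted edges other than $uv$. One common factor cancels, and from each count one subtracts the illegal configurations; bounding those is where the named parameters enter. Edges of $G$ forbidden because they meet a neighbour of $u$ or of $v$ number at most the sum of the degrees of those neighbours, which is at most $D(\Delta)$; edges forbidden by $L$ yield the $\Delta(\lb)$-terms; a second-order double count of $L$-edges yields the $m(L)$-term in $R_1$; and in the lower bound the additive summand $(d_u-x_{j-1,u})(d_v-x_{j-1,v})$ that one discards from the denominator is at most $\Delta_{\partial X}(\db)^2$, which is the corresponding term of $R_2$. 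Dividing, each factor $(1+\beta_j)^{-1}$ then lies between $\frac{(d_u-x_{j-1,u})(d_v-x_{j-1,v})}{2(m(G)-j+1)}(1-\text{err})$ and the same expression with $(1+\text{err})$; as $j$ runs over the edges of $X$ the denominators telescope to $2^{m(X)}[m(G)]_{m(X)}$ and the numerators to $\prod_{i\in W}[d_i]_{x_i}$, while every per-step error is bounded by $R_1$ (resp.\ $R_2$) once $2(m(G)-j+1)$ is replaced by the uniform quantity $2m(G)-2m(X)$. This produces the factors $(1\pm R)^{m(X)}$, and the hypotheses $R_1\le 1$, $R_2\le 1$ ensure the per-step corrections are genuinely below one, so the bounds hold and are nonvacuous.

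The crux is the exhaustive, uniform bookkeeping of the illegal configurations in both the forward and reverse $3$-switching counts: one must check that the switching is genuinely reversible with matching multiplicities, and that every vertex collision, every already-present inserted edge, and every conflict with $L$ or with $X_{j-1}$ is bounded by the claimed quantities uniformly over all $G$ and all initial segments $X_{j-1}$, so that the accumulated errors collapse into the two parameters $R_1$ and $R_2$. Using a $3$-switching rather than McKay's $2$-switching is precisely what shrinks these blocked configurations enough to reach the stated ranges of $\db$ and $L$.
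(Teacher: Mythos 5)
First, note that this paper does not prove the quoted statement: it is Gao and Ohapkin's Theorem 4, reproduced in the Previous Work section for comparison, so there is no in-paper proof to match your argument against. What you sketch is, in structure, the route of the cited source rather than of this paper: you telescope $\Pa(X\subseteq G)=\prod_j(1+\beta_j)^{-1}$ over an edge ordering of $X$ (exactly as in \eqref{X_prob}) and then control each $\beta_j$ by a single $3$-switching family analysed in both directions, obtaining the lower bound on $\beta_j$ (hence the upper bound on the probability) and the upper bound on $\beta_j$ (hence the lower bound) from the same switching. That is consistent with the form of $R_1$ and $R_2$: the second-order term $4m(L)\Delta^2/(2m(G)-2m(X))^2$ in $R_1$ is precisely a signature of lower-bounding the count of $3$-switchings out of the $uv$-containing class, and your identification of the $\Delta_{\partial X}(\db)^2$ term in $R_2$ as the discarded additive summand $(d_{u}-x_{j-1,u})(d_{v}-x_{j-1,v})$ is correct. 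By contrast, the present paper's own machinery (Lemma \ref{ratio_bound_lemma} and Theorem \ref{multiple_edge_thm}) deliberately splits the two directions, using a $2$-switching for the upper bound and the $3$-switching only for the lower bound, which is exactly what removes the $m(L)\Delta^2$ second-order term and lets the errors be expressed through the degree-sum function $D(\cdot)$ and the $\alpha$/$\gamma$ averages; your plan would not reproduce those sharper error terms, but it is the right plan for the statement as quoted.

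One caveat: your proposal stops at the level of naming where each term of $R_1$ and $R_2$ comes from; the actual content of the theorem lies in the uniform bad-case bookkeeping that produces the explicit constants ($6D(\Delta)+2\Delta(8+2\Delta(\lb))$, $2D(\Delta)+6\Delta+2\Delta(\lb)\Delta$, the factor $4$ on $m(L)\Delta^2$), together with the check that replacing $2(m(G)-j+1)$ by $2m(G)-2m(X)$ and converting $(1-\mathrm{err})^{-1}$-type factors into $(1+R_1)$ is legitimate under $R_1\le 1$. As written these steps are asserted, not carried out, so your argument establishes bounds of the correct shape but does not yet verify the particular $R_1,R_2$ in the statement; filling that in is routine but is where essentially all the work of the cited proof resides.
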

After setting conditions on the error functions $R_1$ and $R_2$, it is possible to obtain the probability to within $1+o(1)$ error. Our Theorem \ref{multiple_edge_thm} differs by improving the range of degree sequences where estimates on $\Pa(X \subseteq G)$ can be calculated and further tightening the error functions. The improvement arises largely from two sources; the first is using a 2-switching for the upper bound of $\Pa(X \subseteq G)$ and a 3-switching for its lower bound. The second is the introduction of a function $k \mapsto D(k)$, where $D(k)$ is the sum of the $k$ largest degrees in $\db$. 

McKay \cite[Theorem 4.6]{mcKay_01_line_sum_symmetric} obtained an asymptotic formula for the number of graphs with degree sequence $\gb$ and no edges in common with $X$. The probability that the edges of $X$ are present is then a ratio of two enumeration results, that is $\frac{|\Gs_{\gb-\xb}(\emptygraph,X)|}{|\Gs_{\gb}(\emptygraph,\emptygraph)|}$. In order to apply this theorem to obtain a probability accurate to a $(1+o(1))$ factor, one at least needs $\varDelta(\gb) = o(m(G)^{\frac{1}{4}})$.
% \bdm{This paper does not allow forbidden edges, so REMOVE: This was later improved by McKay and Wormald \cite[Theorem 5.2]{enumeration91} to $\varDelta(\gb) = o(m(G)^{\frac{1}{3}})$.}
For dense $\gb$, McKay \cite[Theorem 1.3]{subgraphs11mckay} provided enumeration results for $|\Gs_{\gb}(\emptygraph,X)|$ provided that each $g_{w_i}$ does not deviate too much from the average degree. Liebenau and Wormald \cite[Theorem 1.6]{liebenau_wormald_2024} gave a formula for $\Pa(uv \in G)$ when the degree sequence has no degrees deviating too much from the average. Their results cover dense degree sequences which our theorems do not cover, while our results allow some vertex degrees to differ substantially from the average degree.
Isaev and McKay~\cite{mckayisaev} provided a formula for the number
of factors with given degrees in a specified dense graph, allowing
a very wide degree variation.

Results regarding  subgraph probabilities have also been obtained by Kim, Sudakov and Vu \cite[Lemma 2.1]{kim07} to estimate $\Pa(X \subseteq G)$ for a graph $X$ with a fixed number of edges and $G \sim \Gs_{\db}(\emptygraph,\emptygraph)$. Here $\db = (d_{w_i})_{i=1}^n$ and each $d_{w_i} = d(1+o(1))$ for some $d = o(n)$ increasing with $n$. This result was extended by D\'{\i}az, Joos, K\"uhn, and Osthus \cite[Lemmas 2.1,2.2 and Corollary 3.1]{diaz19} to $d$-regular $r$-uniform hypergraphs also allowing one to condition on sets of edges being present or not present. 

\subsection*{Bipartite Case} 
Subgraph probabilities in the bipartite case were also derived by McKay in \cite[Theorem 3.5]{mckay81} and \cite[Theorem 2.2]{mckay2010}. After some calculations one can derive the following result.
\begin{thm}[Theorem 3.5 \cite{mckay81} - Simplification]\label{mckay81_thm_simplification}
    Let $G \sim \Bs_{(\sbb,\tb)}(\emptygraph,\emptygraph)$ and $X \subseteq K_{m,n}$ with degree sequence $(\xb,\yb)$. Let $g_{max} \coloneq \max\{\varDelta(\sbb),\varDelta(\tb)\}$ denote the maximum degree in $(\sbb,\tb)$. If $m(X) \cdot g_{max}^2 = o(m(G) - m(X))$ and $\Bs_{(\sbb,\tb)}(\emptygraph,X) \neq \emptyset$, then
    \begin{equation*}
        \Pa(X \subseteq G) = \frac{\prod_{i \in S} [s_i]_{x_i} \prod_{j \in T} [t_j]_{y_{j}}  }{ [m(G)]_{m(X)}} \cdot (1+o(1)).
    \end{equation*}
\end{thm}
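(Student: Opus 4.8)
\emph{Proof proposal.} This is the case $L=\emptygraph$ of the bipartite subgraph-probability estimates developed in Section~\ref{bipartite_random_graphs_section}, and can be deduced from them by simplifying the error term; I outline instead a direct proof by the \emph{method of switchings}, as in McKay~\cite{mckay81}. The plan is to insert the edges of $X$ one at a time. Fix an ordering $e_1,\dots,e_{m(X)}$ of $E(X)$, write $e_k=u_kv_k$ with $u_k\in S$, $v_k\in T$, set $X_0\coloneq\emptygraph$, $X_k\coloneq X_{k-1}+e_k$, and let $x^{(k)}_w$ be the degree of $w$ in $X_k$ (so $x^{(m(X))}_{u_i}=x_i$ and $x^{(m(X))}_{v_j}=y_j$). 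Since every $G\supseteq X_{k-1}$ either contains $e_k$ or not, writing $A_k\coloneq\Bs_{(\sbb,\tb)}(X_k,\emptygraph)$ and $B_k\coloneq\Bs_{(\sbb,\tb)}(X_{k-1},\{e_k\})$ gives
\begin{equation*}
\Pa(X\subseteq G)=\prod_{k=1}^{m(X)}\Bigl(1+\frac{|B_k|}{|A_k|}\Bigr)^{-1},
\end{equation*}
where we may assume each $A_k\neq\emptyset$, as otherwise $\Pa(X\subseteq G)=0$. To estimate $|B_k|/|A_k|$ I would use the $2$-switching sending $G\in B_k$ to $G-u_kv'-u'v_k+e_k+u'v'$, where $u_kv',u'v_k\in G$, $u_kv',u'v_k,u'v'\notin X_{k-1}$ and $u'v'\notin G$ (the four vertices are then distinct since $e_k\notin G$); the inverse move deletes $e_k$ and a suitable non-$X_{k-1}$ edge from a graph of $A_k$. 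A fixed $G\in B_k$ admits $(s_{u_k}-x^{(k-1)}_{u_k})(t_{v_k}-x^{(k-1)}_{v_k})$ ordered choices of $(v',u')$, minus those with $u'v'\in G$, a quantity bounded above by the number of length-three $u_k$--$v_k$ paths in $G$; a fixed $G'\in A_k$ admits $m(G)-m(X_{k-1})$ choices of the deleted edge, minus $O(g_{max}^2)$ correction terms (edges meeting $u_k$ or $v_k$, and edges $u'v'$ with $u_kv'\in G'$ or $u'v_k\in G'$). Double counting the switched pairs expresses $|B_k|/|A_k|$ as the ratio of the averaged reverse and forward counts.

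For the upper bound I would simply drop the non-negative path correction from the forward count. Since $m(G)-m(X_{k-1})\ge m(G)-m(X)\gg m(X)g_{max}^2\ge g_{max}^2$, the reverse count is $(m(G)-m(X_{k-1}))\bigl(1-O(g_{max}^2/(m(G)-m(X)))\bigr)$ uniformly, so $|B_k|/|A_k|\ge\frac{(m(G)-m(X_{k-1}))(1-O(g_{max}^2/(m(G)-m(X))))}{(s_{u_k}-x^{(k-1)}_{u_k})(t_{v_k}-x^{(k-1)}_{v_k})}$. Substituting into the product above, regrouping the numerator factors by vertex (the $x_i$ edges of $X$ at $u_i$ give the factors $s_i,s_i-1,\dots$, and likewise $t_j,t_j-1,\dots$ at $v_j$) and telescoping the reverse-count factors to $[m(G)]_{m(X)}$, then using $\prod_k\bigl(1+O(g_{max}^2/(m(G)-m(X)))\bigr)=1+O\bigl(m(X)g_{max}^2/(m(G)-m(X))\bigr)=1+o(1)$, yields $\Pa(X\subseteq G)\le\frac{\prod_{i\in S}[s_i]_{x_i}\prod_{j\in T}[t_j]_{y_j}}{[m(G)]_{m(X)}}(1+o(1))$.

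The lower bound is the crux, and controlling the path correction is the main obstacle: a per-graph bound is too weak, since an adversarial $G\in B_k$ can have $\Theta(g_{max}^2)$ length-three $u_k$--$v_k$ paths, comparable to the main term. What one actually needs is only the \emph{average} of this count over $B_k$, which equals $\sum_{u',v'}\Pa\bigl(\{u_kv',u'v_k,u'v'\}\subseteq G\mid X_{k-1}\subseteq G,\ e_k\notin G\bigr)$; each summand is bounded by a ratio of quantities of the form $\Pa(X'\subseteq G)$ for $G\sim\Bs_{(\sbb,\tb)}(\emptygraph,\emptygraph)$ with $X'$ a bounded augmentation of $X_{k-1}$, which can be controlled by feeding in the upper bound already proved (together with the matching lower bound for subgraphs with fewer edges, supplied by induction on $m(X)$, and the fact $\Pa(e_k\notin G\mid X_{k-1}\subseteq G)=1-o(1)$, itself a consequence of the upper bound). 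One then checks that, summed over the $m(X)$ steps, the resulting relative error is $o(1)$ precisely because $m(X)g_{max}^2=o(m(G)-m(X))$; combined with the trivial bound $m(G)-m(X_{k-1})$ on the reverse count, this reverses the inequalities of the previous paragraph and gives the matching lower bound. (An alternative that avoids the induction is to replace a blocked $2$-switch by an augmented three-step move, which eliminates the leading error term at the source.) Propagating this error estimate through the $m(X)$ insertions is the only genuinely delicate step; everything else is routine counting.
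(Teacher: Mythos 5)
You have the right skeleton, and for half the argument it is the paper's own: Theorem \ref{mckay81_thm_simplification} is quoted from McKay (1981), and the paper re-derives it (under the weaker hypothesis $m(X)\varDelta(\sbb)\varDelta(\tb)=o(m(G)-m(X))$) from Corollary \ref{multiple_edge_cor_bipartite}, whose proof uses exactly your edge-by-edge telescoping product and the same 2-switching upper bound (Lemma \ref{ratio_bound_lemma_bipartite}(A), via Claims \ref{forward_2_switch_lower_bipartite} and \ref{bipartite_2_backward_upper}). The genuine divergence is your lower bound. You rightly observe that the per-graph 2-switching count can be wiped out by up to $\Theta(g_{max}^2)$ three-edge $u_k$--$v_k$ paths, and you repair this by averaging that obstruction over $B_k$ and bounding the resulting conditional probabilities using the already-proved upper bound together with an induction on the number of inserted edges. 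The paper (following Gao--Ohapkin and McKay--Wormald) instead removes the obstruction at the source: its lower bound comes from the 3-switching of Section \ref{bipartite_proofs_section} (Lemma \ref{ratio_bound_lemma_bipartite}(B), Claims \ref{forward_3_switch_upper_bipartite} and \ref{back_3_switch_bipartite_lem}), whose backward count has main term $m(G)s_ut_v$, so the $O(s_ut_v\,g_{max}^2)$ corrections are relatively small for \emph{every} graph and no averaging, bootstrapping or induction is needed; this is precisely the ``augmented three-step move'' you mention only parenthetically. Your route does close: bounding each summand by a ratio of containment probabilities, summing with $\sum_{u'}s_{u'}^2\le g_{max}m(G)$ and $\sum_{v'}t_{v'}^2\le g_{max}m(G)$, and using $m(G)-m(X)=\Theta(m(G))$ (a consequence of the hypothesis when $X\neq\emptygraph$) gives a per-step relative error $O(g_{max}^2/m(G))$, hence $o(1)$ over the $m(X)$ steps; but it requires uniform, quantitative versions of the upper bound for the augmented subgraphs and careful propagation of the accumulated error through the induction, which you only gesture at. Also, you cannot simply ``assume each $A_k\neq\emptyset$, as otherwise $\Pa(X\subseteq G)=0$'' --- that would contradict the asserted asymptotic formula, so nonemptiness must itself be a by-product of the lower-bound count (as it is in both your averaging argument and the paper's 3-switching). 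Net comparison: your averaging-plus-induction route is self-contained with only 2-switchings but bookkeeping-heavy; the paper's 3-switching buys clean per-graph inequalities and a shorter path to the same $1+o(1)$ factor.
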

Our theorems increase the range of degree sequences where these probabilities can be calculated. As a simple example, one could replace the above condition by $m(X) \varDelta(\sbb)\varDelta(\tb) = o(m(G)-m(X))$ and obtain the same result, as we show in more generality in Corollary \ref{multiple_edge_cor_bipartite}.

McKay \cite[Theorem 4.6]{mcKay_01_line_sum} found an asymptotic formula for the number of 0-1 matrices with prescribed row and column sums. One can find the probability that a pairing produced using the configuration model avoids all edges in some bipartite graph $X$. The enumeration results then yield a probability that $X$ is present in a random bipartite graph $G \sim \Bs_{(\sbb,\tb)}(\emptygraph,\emptygraph)$, where it is necessary that $g_{max} = o(m(G)^{\frac{1}{4}})$ to obtain a probability to within  a $(1+o(1))$ factor.

Greenhill and McKay \cite[Theorem 2.2]{greenhillmckay09} provided probabilities that a random graph $G \sim \Bs_{(\sbb,\tb)}(\emptygraph,\emptygraph)$ avoids or contains all edges in a subgraph $H$. Their result can be applied when the degrees in $\sbb$ (resp.~$\tb$) do not differ too much from the average degree in $\sbb$ (resp.~$\tb$). This theorem can be applied to dense degree sequences where our theorems do not apply. However, with a highly skewed degree sequence a vertex degree in one component may significantly differ from the average degree in that component, in this case one can apply our Lemma \ref{single_edge_prob_lemma_bipartite}.

Liebenau and Wormald \cite[Theorem 1.4]{liebenau_wormald_bipartite} provided a formula for the probability that a single edge is present in a bipartite graph with a given degree sequence. Again, the degrees in each component cannot differ too much from the average degree in that component. However, their results also apply in the dense cases where our theorems do not.

\section{Degree Sum Function}\label{section:degree_sum_function}
The following definition can be applied to any degree sequence $\db$ defined with respect to some set $W$ of vertices. For example, we will use this definition for bipartite graphs with degree sequence $(\sbb,\tb)$, where we consider $\sbb$ a degree sequence with respect to $S$ and $\tb$ a degree sequence with respect to $T$.

Then without loss of generality, let $\db$ be a degree sequence with respect to the vertex set $W$ and $A \subsetneq W$. The below function $k \mapsto L^{A}(\db,k)$ is the sum of the $k$ largest degrees in $\db$, where the sum \textbf{excludes} degrees of vertices in $A$.

\begin{defn}[Lazy Degree Sum Function]\label{lazy_deg_sum_defn}
Let $\db = (d_{w_i})_{i=1}^n$ be a degree sequence on $W = \{w_1,\dots,w_n\}$ and $A \subsetneq W$. Let $d_{i_1} \geq d_{i_2} \geq  \dots \geq d_{i_{n-|A|}}$ be a non-increasing ordering of the degrees from vertices in $W\setminus A$. For $k \in \{1,2,\dots,n-|A|\}$, the \textit{lazy degree sum function} $L^A(\db,\cdot): \{0,1,\dots,n-|A|\} \to \N_{\geq 0}$ is given by
    $$L^{A}(\db,k) \coloneq \sum_{j=1}^{k} d_{i_j}. $$
\end{defn}
% Note there are always $k \in [n-|A|]$ terms in the summation.
% \bdm{Not sure what that means. If it is a promise that
% you won't apply the notation when $k>n-|A|$, it would
% be clearer to write ``We will only use this notation 
% when $k\le n-|A|$.''}
We will only use this notation when $k\le n-|A|$.
As a simple example, for $G \subseteq K_n$ a graph with degree sequence $\db$, $L^{\{w_3\}}(\db(G),n-1) = 2m(G) - d_{w_3}$. 

One idea to improve the probabilistic bounds involves extending the lazy degree sum function in a natural manner to all of $\R$. This is particularly useful since the extended function is non-decreasing and concave.

\begin{defn}[Degree Sum Function]
Consider the setup of Definition \ref{lazy_deg_sum_defn}. The \textit{degree sum function} $D^A(\db,\cdot): \R \to \R$ is the natural piecewise linear extension of the lazy degree sum function. It is given by
    \begin{equation*}
        D^A(\db,x) \coloneq \begin{cases}
            d_{i_1}x, & \text{if } x < 1;  \\
            d_{i_{k+1}}(x-k) +L^{A}(\db,k), & \text{if } x \in [k,k+1) \text{ for some } k \in [n-|A|-1];  \\
            L^{A}(\db,n-|A|), & \text{if } x \geq n-|A|.
        \end{cases}
    \end{equation*}
    If the degree sequence $\db$ is clear from context, as in the case of calculating probabilities of random graphs with degree sequence $\db$, then we omit it from the notation and write $D^A(\db,x) \coloneq D^A(x)$. If $A = \emptyset$, we drop it from the notation. If $H$ is a graph defined on the vertex set $W$ with $\db-\hb$ non-negative, then $D_H^A(x) \coloneq D^A(\db-\hb,x)$.
\end{defn}

The degree sum function enjoys certain properties which we will repeatedly use throughout the proofs in this paper. They are presented in the following lemma without proof.

\begin{lemma}[Properties of the Degree Sum Function]\label{deg_sum_prop}
Let $\db = (d_{w_i})_{i=1}^n$ be a degree sequence with respect to the vertex set $W$ and fix $A \subsetneq W$. The following properties hold:
\begin{enumerate}[label=(\roman*), leftmargin=2.5em]
    \item If $x \in A$ and $k \in \{0,1,\dots,n-|A|-1\}$, then $d_x + D^{A}(k) \leq D^{A \setminus \{x\}}(k+1)$.
    \item The degree sum function $x \mapsto D^A(x)$ is \textit{non-decreasing} and \textit{concave}.
    \item Let $\gb$ be a degree sequence with $\gb - \db$ non-negative. Then for all $x \in \R$, $D^A(\db,x) \leq D^A(\gb,x)$.
    Moreover, this implies that if $H_1 \subseteq H_2$, then $D_{H_2}^A(x) \leq  D_{H_1}^A(x)$.
\end{enumerate}
\end{lemma}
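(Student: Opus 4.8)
The plan is to treat the three parts in turn, and in each case to reduce first to integer arguments $k \in \{0,1,\dots,n-|A|\}$, where $D^A(\db,k)$ coincides with the lazy degree sum $L^A(\db,k)$ and is therefore an honest sum of degrees, and only then to pass to real $x$ via the piecewise-linear interpolation. Part (i) is already stated at integer arguments, so nothing needs to be extended there.

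For (i), observe that $W\setminus(A\setminus\{x\})=(W\setminus A)\cup\{x\}$, so $D^{A\setminus\{x\}}(k+1)=L^{A\setminus\{x\}}(\db,k+1)$ is the maximum, over all $(k+1)$-element subsets of $(W\setminus A)\cup\{x\}$, of the sum of the degrees of the chosen vertices; this maximum is well defined because $k+1\le n-|A|+1=|(W\setminus A)\cup\{x\}|$. I would then exhibit one admissible such subset realising the left-hand side: take $x$ together with $k$ vertices of $W\setminus A$ of largest degree, whose degree sum is exactly $d_x+L^A(\db,k)=d_x+D^A(k)$. Since the left-hand side is the value on a particular subset while the right-hand side is the maximum over all of them, the inequality follows. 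This selection/exchange step is the only genuinely combinatorial point in the lemma.

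For (ii), I would use the standard fact that a continuous piecewise-linear function on $\R$ is non-decreasing iff all its slopes are $\ge 0$, and concave iff its slopes, read from left to right, form a non-increasing sequence. Reading the slopes off the definition of $D^A$, they are $d_{i_1},d_{i_2},\dots,d_{i_{n-|A|}}$ on the bounded pieces and $0$ on $[n-|A|,\infty)$; all are non-negative since degrees are non-negative, which gives monotonicity, and they satisfy $d_{i_1}\ge d_{i_2}\ge\cdots\ge d_{i_{n-|A|}}\ge 0$ by the choice of ordering, which gives concavity. A one-line check that adjacent pieces agree at the integer breakpoints, using $L^A(\db,k+1)=L^A(\db,k)+d_{i_{k+1}}$, confirms continuity.

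For (iii), I would first show $L^A(\db,k)\le L^A(\gb,k)$ for every integer $k\le n-|A|$: if $P\subseteq W\setminus A$ is a $k$-set attaining $L^A(\db,k)$, then $L^A(\db,k)=\sum_{w\in P}d_w\le\sum_{w\in P}g_w\le L^A(\gb,k)$, the last step because $L^A(\gb,k)$ maximises over all $k$-subsets of $W\setminus A$. To lift this to real $x\ge 0$, note that for $x\in[k,k+1)$ one has $D^A(\db,x)=(1-(x-k))\,L^A(\db,k)+(x-k)\,L^A(\db,k+1)$, a convex combination whose weights depend only on $x$, and likewise for $\gb$ with the \emph{same} weights and the \emph{same} breakpoints; the inequality at the two integer endpoints therefore propagates, while on $[0,1)$ it follows from $x\ge 0$ and $d_{i_1}(\db)\le d_{i_1}(\gb)$, and on $[n-|A|,\infty)$ both sides are constant. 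For the ``moreover'' clause, $H_1\subseteq H_2$ forces $\hb_1\le\hb_2$ componentwise, hence $(\db-\hb_1)-(\db-\hb_2)=\hb_2-\hb_1$ is non-negative, and applying the just-proved inequality with the degree sequences $\db-\hb_2$ and $\db-\hb_1$ gives $D_{H_2}^A(x)=D^A(\db-\hb_2,x)\le D^A(\db-\hb_1,x)=D_{H_1}^A(x)$. None of this is deep; the part needing the most care is making the reduction from real $x$ to integer $k$ fully rigorous — matching breakpoints across different degree sequences, checking the convex-combination identity, and handling the unbounded pieces — together with pinning down the admissible subset in (i) exactly.
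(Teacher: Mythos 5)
The paper states Lemma~\ref{deg_sum_prop} explicitly \emph{without proof}, so there is no argument of the authors to compare against; your write-up simply supplies the omitted routine verification, and it is correct: the exchange/selection argument for (i), the slope check for monotonicity and concavity in (ii), and the integer-point comparison plus convex-combination interpolation for (iii) are all sound, and the ``moreover'' clause follows as you say from $\hb_2-\hb_1\geq 0$. One small point worth noting: your restriction to $x\geq 0$ in part (iii) is actually necessary, since on the piece $x<1$ the inequality $d_{i_1}(\db)\,x\leq d_{i_1}(\gb)\,x$ can reverse for negative $x$; the lemma's ``for all $x\in\R$'' should be read (and is only ever used in the paper) at non-negative arguments, so your reading is the right one.
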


We now introduce a parameter which is an \textbf{average} of the largest degrees in a given degree sequence. It will arise naturally in the 3-switching calculations (Section \ref{3_switch_section}). In the following definition it is not necessary that $W_1 = W_2$.

\begin{defn}[$\alpha$-value] % \label{alpha_defn}
Let $\nb, \lb$ be degree sequences with respect to the vertex set $W_1$ and $\gb,\hb$ degree sequences with respect to the vertex set $W_2$. For $w \in W_2$, define
\begin{equation*}
    \alpha_w(\nb,\lb,\gb,\hb) \coloneq \frac{D(\nb + \lb, \max\{g_w-h_w,1\})}{\max\{g_w-h_w,1\}}.
\end{equation*}
The $\alpha$-value behaves like an average of the $\max\{g_w-h_w,1\}$ largest  degrees in $\nb + \lb$. We take a maximum to avoid division by zero.
\end{defn}
The fact that the $\alpha$-value varies with $w \in W_2$ would complicate some theorems, therefore we choose to uniformly bound it by the $\gamma$-value below. This may be much less than the maximum degree in $\nb + \lb$.
\begin{defn}[Uniform Bound on $\alpha$-Value] % \label{gamma_def}
Let $\nb, \lb$ be degree sequences with respect to the vertex set $W_1$ and $\gb,\hb$ degree sequences with respect to the vertex set $W_2$. Let $w_{min} \in W_2$ denote a vertex of minimum degree in $\gb - \hb$. Define
    \begin{equation*}
        \gamma(\nb,\lb,\gb,\hb) \coloneq \alpha_{w_{min}}(\nb,\lb,\gb,\hb).
    \end{equation*}
    Observe $\alpha_w(\nb,\lb,\gb,\hb) \leq \gamma(\nb,\lb,\gb,\hb) \leq \varDelta(\nb + \lb)$ as $w$ ranges over all vertices in $W_2$. This follows as the function $x \mapsto \frac{D(\nb + \lb, \max\{x,1\})}{\max\{x,1\}}$ is monotonically decreasing in $x$.
\end{defn}

\section{Main Results: Generic Random Graphs}\label{main_results_section}
Before introducing the main theorems we define two functions. When appearing in the theorems, one should consider these as `error' terms, which we will impose restrictions upon to make their contribution negligible.

\begin{defn}
    Let $\db$ be a degree sequence and $H,L \subseteq K_n$ with $H \cap L = \emptygraph$. Let $uv \notin H \cup L$ be an edge. Define
    \begin{equation*}
        f(\db,H,L,uv) \coloneq 1 - \frac{D_{H}(d_{u} + l_{u}) + D_{H}(d_{v} + l_{v})}{2m(G) - 2m(H)}
    \end{equation*}
    and
    \begin{equation*}
        g(\db, H, L, uv) \coloneq  1- \frac{D_H(\alpha_{u}(\db,\lb,\db,\hb)+2) + D_H(\alpha_{v}(\db,\lb,\db,\hb)+2)}{2m(G) - 2m(H)}.
    \end{equation*}
\end{defn}

\subsection{Single Edge Theorems}
The lemma below bounds the probability that the edge $uv$ is present conditioned on the event $H$ is present and all of $L$ is not present.
\begin{lemma}[Single Edge Probability]\label{single_edge_prob_lemma}
    Let $G \sim \Gs_{\db}(H,L)$ with $uv \notin H \cup L$.
    \begin{enumerate}[label=\textbf{(\Alph*)}, leftmargin=*]
    \item If $f(\db, H, L, uv) > 0$, then 
    \begin{equation*}
        \Pa(uv \in G) \leq \left(1 + \frac{2m(G) - 2m(H)}{(d_u - h_u)(d_v-h_v)} \cdot  f(\db,H,L,uv) \right)^{-1}. 
    \end{equation*}
    \item If $g(\db,H,L,uv) > 0$, then
    \begin{equation*}
        \Pa(uv \in G) \geq \left(1+\frac{2m(G) - 2m(H)}{(d_u - h_u)(d_v-h_v)} \cdot \frac{1}{g(\db,H,L,uv)}\right)^{-1}.
    \end{equation*}
    \end{enumerate}
\end{lemma}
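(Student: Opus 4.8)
The plan is to fix the edge $uv$ and split $\Gs_{\db}(H,L)$ according to whether $uv$ is present. Writing $A \coloneq \Gs_{\db}(H+uv,L)$ and $B \coloneq \Gs_{\db}(H,L+uv)$, every $G\in\Gs_{\db}(H,L)$ lies in exactly one of $A$ and $B$, so
\begin{equation*}
\Pa(uv\in G)=\frac{|A|}{|A|+|B|}=\Bigl(1+\frac{|B|}{|A|}\Bigr)^{-1}.
\end{equation*}
Hence part \textbf{(A)} is a lower bound, and part \textbf{(B)} an upper bound, on $|B|/|A|$, and each is obtained by constructing an auxiliary bipartite multigraph between $A$ and $B$ whose edges are ``switches'' and double counting: $|B|/|A|$ is the total number of switches divided once by a lower bound on the switches leaving each $G\in A$ (or $G'\in B$) and once by an upper bound on the switches entering each member of the other class.

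For \textbf{(A)} I would use a $2$-switching (Figure~\ref{2_switch}). Given $G\in A$, delete $uv$ together with an oriented edge $(x,y)$ of $G\setminus H$ with $x,y\notin\{u,v\}$ and add $ux,vy$; this is valid precisely when $ux,vy\notin G\cup L$ and $xy\notin H$, and it produces a member of $B$. There are $2(m(G)-m(H))$ oriented edges of $G\setminus H$ to start from, and the forbidden choices are those meeting $\{u,v\}$ or with $ux\in G\cup L$ or $vy\in G\cup L$; the latter are edges incident to the $(G\cup L)$-neighbourhood of $u$, a set of at most $d_u+l_u$ vertices, so by Lemma~\ref{deg_sum_prop}(iii) they number at most $D_H(d_u+l_u)$, and symmetrically for $v$. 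This gives at least $(2m(G)-2m(H))\,f(\db,H,L,uv)$ switches out of each $G\in A$. Conversely, recovering a fixed $G'\in B$ forces the choice of the two created edges $ux,vy\in G'\setminus H$, so at most $(d_u-h_u)(d_v-h_v)$ switches enter $G'$. Double counting yields $|B|/|A|\ge (2m(G)-2m(H))\,f(\db,H,L,uv)\big/\bigl((d_u-h_u)(d_v-h_v)\bigr)$, which is \textbf{(A)}.

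For \textbf{(B)} a $2$-switch is too rigid, since the created edge $xy$ may be blocked, so I would use a $3$-switching (Figure~\ref{3_switch}): from $G'\in B$ choose an edge $au\in G'\setminus H$ ($d_u-h_u$ ways), an edge $bv\in G'\setminus H$ ($d_v-h_v$ ways) and an oriented edge $(c,d)\in G'\setminus H$ ($2(m(G)-m(H))$ ways), delete all three and add $uv,ac,db$; this is valid when $uv,ac,db\notin G'\cup L$ and the six vertices are suitably distinct, and it lands in $A$. The choices blocked by $ac\in G'\cup L$ are, for fixed $a$, at most $D_H(d_a+l_a)$; summing over the $d_u-h_u$ neighbours $a$ of $u$ and using concavity of $D_H$ together with $\tfrac1{d_u-h_u}\sum_a(d_a+l_a)\le\alpha_u(\db,\lb,\db,\hb)$ (the average of the $d_u-h_u$ largest degrees of $\db+\lb$) converts this into $(d_u-h_u)\,D_H(\alpha_u(\db,\lb,\db,\hb)+2)$, the extra $2$ absorbing the finitely many excluded vertices; likewise for $v$. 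Thus at least $(d_u-h_u)(d_v-h_v)(2m(G)-2m(H))\,g(\db,H,L,uv)$ switches leave each $G'\in B$, while recovering a fixed $G\in A$ requires only choosing the two oriented edges $ac,db$ of $G\setminus H$, so at most $(2m(G)-2m(H))^2$ switches enter $G$. Double counting gives $|B|/|A|\le (2m(G)-2m(H))\big/\bigl((d_u-h_u)(d_v-h_v)\,g(\db,H,L,uv)\bigr)$, which is \textbf{(B)}.

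The main obstacle is the precise bookkeeping of the correction terms: one must show that every defective switch (a coincidence among the chosen vertices, a created edge already in $G$ or in $L$, or a deleted edge lying in $H$) can be charged to a vertex of a named neighbourhood, and then collapse the resulting sums of degrees into the closed forms $D_H(d_\bullet+l_\bullet)$ and $D_H(\alpha_\bullet+2)$ using monotonicity, concavity and Lemma~\ref{deg_sum_prop}(iii). Keeping these estimates sharp enough that exactly $f$ and $g$ (rather than cruder expressions) emerge is the delicate point, and the hypotheses $f>0$, $g>0$ are what guarantee the relevant switch counts are strictly positive, so that the double counting is legitimate.
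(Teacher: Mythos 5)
Your proposal is correct and follows essentially the same route as the paper: the partition identity $\Pa(uv\in G)=\bigl(1+|\Gs_{\db}(H,L+uv)|/|\Gs_{\db}(H+uv,L)|\bigr)^{-1}$ combined with a 2-switching count for the upper bound and a 3-switching count for the lower bound, with exactly the paper's estimates (forward 2-switchings at least $(2m(G)-2m(H))f$, backward at most $(d_u-h_u)(d_v-h_v)$, and the $D_H(\alpha_\bullet+2)$ bound via Lemma~\ref{deg_sum_prop} and Jensen giving $g$). The bookkeeping you defer (absorbing the $x=u$, $y=u$ type coincidences into the lazy sums, and the ``$+2$'' absorbing the terrible cases $\{a,b\}\cap\{u,v\}\neq\emptyset$) is precisely what the paper's Claims~\ref{forward_2_switch_lower}--\ref{backward_switch_3_lower_general} carry out, and your stated final constants agree with them.
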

\begin{rmk}
Let $\varDelta = \varDelta(\db)$. After setting conditions on $f$ and $g$, we can sandwich the probability $\Pa(uv \in G) = \left(1 + \frac{2m(G) - 2m(H)}{(d_u - h_u)(d_v-h_v)} \right)^{-1}\cdot (1+o(1))$. Indeed, Gao and Ohapkin \cite[Corollary 2]{gao22} obtained this probability up to a factor of $1+o(1)$ under two assumptions: (1) $D(\varDelta) + \varDelta \cdot \varDelta(\lb) = o(m(G) - m(H))$ and (2) $m(L)\cdot \varDelta^2 = o((m(G) - m(H))^2)$. Lemma \ref{single_edge_prob_lemma} shows the same asymptotic probability holds under the weaker assumption $D_H(\varDelta + \varDelta(\lb)) = o(m(G) - m(H))$, which is implied solely by (1). 
\end{rmk}

\subsection{Multiple Edge Theorems}
The following theorem and corollary estimates the probability that the subgraph $X$ is present given the edges of $L$ are all not present.
\begin{thm}[Multiple Edges]\label{multiple_edge_thm}
 Let $G \sim \Gs_{\db}(\emptygraph,L)$ and $X \cap L = \emptygraph$. Let $\{e_i\}_{i=1}^{m(X)}$ be an enumeration of the edges of $X$. Define $X_0 \coloneq \emptygraph$ and $X_i \coloneq X_0 + e_1 + e_2 + \dots + e_i$ for $1 \leq i \leq m(X)$.
    \begin{enumerate}[label=\textbf{(\Alph*)}, leftmargin=*]
    \item 
    If $f(\db, X_{i-1}, L, e_i) > 0$ for all $i$, then
    \begin{equation*}
       \hspace{-1cm} \Pa(X \subseteq G) \leq \frac{\prod_{j \in W} [d_j]_{x_{j}}}{2^{m(X)} [m(G)]_{m(X)}} \cdot \prod_{i=1}^{m(X)} \frac{1}{f(\db, X_{i-1}, L, e_i)}.
    \end{equation*}
    \item 
    If $g(\db, X_{i-1}, L, e_i) > 0$ for all $i$, then
    \begin{equation*}
        \Pa(X \subseteq G) \geq \frac{\prod_{j \in W} [d_j]_{x_{j}} }{ 2^{m(X)}[m(G)]_{m(X)}} \cdot \prod_{i=1}^{m(X)} \frac{g(\db, X_{i-1}, L, e_i)}{\phi(\db,X)},
    \end{equation*}
    where $\phi(\db,X) \coloneq 1 + \frac{\frac{1}{m(X)} \cdot \sum_{uv \in X}d_ud_v }{2m(G) - 2m(X)}.$
\end{enumerate}
\end{thm}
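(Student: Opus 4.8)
The plan is to telescope $\Pa(X\subseteq G)$ into a product of single-edge probabilities and feed each factor into Lemma~\ref{single_edge_prob_lemma}. Write $e_i=u_iv_i$. Since $e_1,\dots,e_{m(X)}$ are distinct edges of $X$ and $X\cap L=\emptygraph$, one has $e_i\notin X_{i-1}\cup L$, and since $X_i\subseteq X$ each $\Gs_{\db}(X_i,L)\supseteq\Gs_{\db}(X,L)$ is non-empty; so, with $G_{i-1}\sim\Gs_{\db}(X_{i-1},L)$ (a uniform element of which lies in $\Gs_{\db}(X_i,L)$ exactly when it contains $e_i$),
\begin{equation*}
  \Pa(X\subseteq G)=\frac{|\Gs_{\db}(X,L)|}{|\Gs_{\db}(\emptygraph,L)|}=\prod_{i=1}^{m(X)}\frac{|\Gs_{\db}(X_i,L)|}{|\Gs_{\db}(X_{i-1},L)|}=\prod_{i=1}^{m(X)}\Pa(e_i\in G_{i-1}).
\end{equation*}
I would then set $a_i\coloneq(d_{u_i}-x_{i-1,u_i})(d_{v_i}-x_{i-1,v_i})$ and $b_i\coloneq 2m(G)-2m(X_{i-1})=2m(G)-2(i-1)$, noting $a_i\ge1$ (non-emptiness of $\Gs_{\db}(X_i,L)$) and $b_i\ge2$ (since $i-1<m(X)\le m(G)$), and record the two bookkeeping identities
\begin{equation*}
  \prod_{i=1}^{m(X)}a_i=\prod_{j\in W}[d_j]_{x_j},\qquad\prod_{i=1}^{m(X)}b_i=2^{m(X)}[m(G)]_{m(X)}.
\end{equation*}
The second is the telescoping product $\prod_{i}2(m(G)-i+1)$; the first holds because, for each vertex $w$, processing in order the $x_w$ edges of $X$ at $w$ contributes to the $a_i$ precisely the factors $d_w,d_w-1,\dots,d_w-x_w+1$.

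For Part~(A), Lemma~\ref{single_edge_prob_lemma}(A) applied with $H=X_{i-1}$ and $uv=e_i$ (its hypothesis is exactly our standing assumption $f_i\coloneq f(\db,X_{i-1},L,e_i)>0$) gives
\begin{equation*}
  \Pa(e_i\in G_{i-1})\le\Bigl(1+\tfrac{b_i}{a_i}f_i\Bigr)^{-1}=\frac{a_i}{a_i+b_if_i}\le\frac{a_i}{b_if_i};
\end{equation*}
multiplying over $i$ and inserting the two identities produces the stated upper bound. For Part~(B), Lemma~\ref{single_edge_prob_lemma}(B) gives, with $g_i\coloneq g(\db,X_{i-1},L,e_i)>0$, that $\Pa(e_i\in G_{i-1})\ge\bigl(1+\tfrac{b_i}{a_ig_i}\bigr)^{-1}=\tfrac{a_ig_i}{a_ig_i+b_i}$, so
\begin{equation*}
  \Pa(X\subseteq G)\ge\prod_{i=1}^{m(X)}\frac{a_ig_i}{a_ig_i+b_i}=\Bigl(\prod_{i}\tfrac{a_i}{b_i}\Bigr)\Bigl(\prod_{i}g_i\Bigr)\prod_{i}\Bigl(1+\tfrac{a_ig_i}{b_i}\Bigr)^{-1},
\end{equation*}
and it then remains to show $\prod_{i}\bigl(1+\tfrac{a_ig_i}{b_i}\bigr)\le\phi(\db,X)^{m(X)}$. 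Since $g_i\le1$ (the term subtracted in the definition of $g$ is non-negative) and $b_i>2m(G)-2m(X)>0$ (the latter because $\phi(\db,X)$ is defined, forcing $m(X)<m(G)$), one gets $\tfrac{a_ig_i}{b_i}\le\tfrac{d_{u_i}d_{v_i}}{2m(G)-2m(X)}$; then AM--GM applied to the $m(X)$ numbers $1+\tfrac{d_{u_i}d_{v_i}}{2m(G)-2m(X)}$, whose arithmetic mean equals $\phi(\db,X)$, gives the bound. Substituting $\prod_{i}\tfrac{a_i}{b_i}=\prod_{j}[d_j]_{x_j}\big/\bigl(2^{m(X)}[m(G)]_{m(X)}\bigr)$ finishes Part~(B).

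I do not expect a genuine obstacle here: the argument is the telescoping reduction to single edges, the two elementary product identities, and the observation that $\phi$ in Part~(B) is precisely what AM--GM returns after averaging the per-edge quantities $d_ud_v/(2m(G)-2m(X))$ over the edges of $X$. The only points that need care are checking that Lemma~\ref{single_edge_prob_lemma} applies at each step---which is immediate, since its hypotheses are verbatim the hypotheses assumed in the theorem---and the positivity bookkeeping $a_i\ge1$, $b_i\ge2$, both consequences of $\Gs_{\db}(X,L)\ne\emptyset$ and $m(X)\le m(G)$.
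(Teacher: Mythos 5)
Your proof is correct and follows essentially the same route as the paper: telescoping $\Pa(X\subseteq G)$ into per-edge factors controlled by the switching bounds (the paper invokes Lemma~\ref{ratio_bound_lemma} directly, of which Lemma~\ref{single_edge_prob_lemma} is just the single-edge repackaging), then the two product identities, the relaxation $g\le 1$, $a_i\le d_{u_i}d_{v_i}$, $b_i\ge 2m(G)-2m(X)$, and AM--GM to produce $\phi(\db,X)^{m(X)}$. The only cosmetic difference is in Part~(A), where you drop $a_i$ from the denominator termwise instead of the paper's detour through $\prod_i(1+c_i)^{-1}\le(1+\prod_i c_i)^{-1}$; the resulting bound is identical.
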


\begin{rmk}
    This result is similar to \cite[Theorem 4]{gao22} but with smaller error terms. This allows us to obtain stronger asymptotic results in the next corollary, which is essentially a simplification of these error terms. However, there is a trade-off between simplicity and accuracy. Indeed, there are degree sequences and graphs $X\subseteq K_n$, where the upper bound in Corollary \ref{multiple_edge_cor} fails to give the result up to a $1+o(1)$ factor. However, by using a better term (labelled $\kappa$ in the proof of the corollary) in the calculation one can still obtain the result to a factor of $1+o(1)$. We provide an example of such a calculation in Section \ref{section:example_calculation}.
\end{rmk}

\begin{cor}[Multiple Edges - Simplified]\label{multiple_edge_cor} Let $G \sim \Gs_{\db}(\emptygraph,L)$ with $X \cap L = \emptygraph$. Let $\rho : \N \to (0,1)$ be any function bounded away from $1$. Let
\begin{equation*}
    \Pi(X) \coloneq \frac{\prod_{j \in W} [d_j]_{x_{j}} }{ 2^{m(X)}[m(G)]_{m(X)}} .
\end{equation*}

\begin{enumerate}[label=\textbf{(\Alph*)}, leftmargin=*]
\item If as $k\to\infty$, $D(\varDelta_{\partial X}(\db + \lb)) \leq \rho(k) \cdot  \left( m(G) - m(X) \right)$, then
\begin{equation*}
\Pa(X \subseteq G) \leq \Pi(X)  \cdot
\begin{cases}
 1 + o(1), & \text{if } m(X)\cdot \rho(k) = o(1); \\
O(1), & \text{if } m(X) \cdot \rho(k) = O(1).
\end{cases}
\end{equation*}
\item If as $k \to \infty$, $D(\gamma(\db,\lb,\db,\xb)+2) \leq \rho(k) \cdot  \left(m(G) - m(X)\right)$, then
\begin{equation*}
\Pa(X \subseteq G) \geq \Pi(X) \cdot 
\begin{cases}
 1 + o(1), & \text{if } m(X) \cdot \rho(k) = o(1) \text{ and } \varLambda(k) = o(1);  \\
\Omega(1), & \text{if } m(X) \cdot \rho(k) = O(1) \text{ and }  \varLambda(k) = O(1),
\end{cases}
\end{equation*}
where $\varLambda(k) \coloneq \frac{m(X)\cdot \varDelta_{\partial X}(\db)^2}{m(G) - m(X)}$. 
\end{enumerate}
\end{cor}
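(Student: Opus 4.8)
The plan is to obtain both parts as consequences of Theorem~\ref{multiple_edge_thm}, by replacing each of the many factors appearing in its error products by a single clean quantity. The only tools needed are that the degree sum function is non-decreasing and graph-monotone (Lemma~\ref{deg_sum_prop}(ii)--(iii)), the chain $\alpha_w \le \gamma \le \varDelta(\cdot)$ recorded after the definition of the $\gamma$-value, and the elementary estimates $\ln(1+t)\le t$ for $t\ge 0$ and $-\ln(1-t)\le t/(1-\rho_0)$ for $0\le t\le \rho_0<1$, where $\rho_0<1$ is any constant witnessing that $\rho$ is bounded away from $1$.

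For part~\textbf{(A)}, fix any enumeration $e_1,\dots,e_{m(X)}$ of the edges of $X$ and the nested graphs $X_0\subseteq\dots\subseteq X_{m(X)}=X$ of Theorem~\ref{multiple_edge_thm}, and for $e_i=uv$ bound $f(\db,X_{i-1},L,e_i)$ from below. Since $X_{i-1}\subseteq X$, Lemma~\ref{deg_sum_prop}(iii) gives $D_{X_{i-1}}(\cdot)\le D(\cdot)$; since $u,v\in\partial X$ we have $d_u+l_u,\,d_v+l_v\le \varDelta_{\partial X}(\db+\lb)$; and $m(X_{i-1})\le m(X)$. Combining these with the non-negativity of $D$ and the hypothesis yields $f(\db,X_{i-1},L,e_i)\ge 1-\tfrac{D(\varDelta_{\partial X}(\db+\lb))}{m(G)-m(X)}\ge 1-\rho(k)>0$, so Theorem~\ref{multiple_edge_thm}\textbf{(A)} applies and gives $\Pa(X\subseteq G)\le \Pi(X)(1-\rho(k))^{-m(X)}$. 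Since $(1-\rho(k))^{-m(X)}\le\exp\bigl(\tfrac{m(X)\rho(k)}{1-\rho_0}\bigr)$, this is $\Pi(X)(1+o(1))$ when $m(X)\rho(k)=o(1)$ and $\Pi(X)\cdot O(1)$ when $m(X)\rho(k)=O(1)$.

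For part~\textbf{(B)} the extra work is to control the $\alpha$-values inside $g$. For $e_i=uv$, because $X_{i-1}\subseteq X$ we have $x_{i-1,u}\le x_u$, hence $\max\{d_u-x_{i-1,u},1\}\ge\max\{d_u-x_u,1\}$; the monotonicity of $x\mapsto D(\db+\lb,\max\{x,1\})/\max\{x,1\}$ then gives $\alpha_u(\db,\lb,\db,\xb_{i-1})\le\alpha_u(\db,\lb,\db,\xb)\le\gamma(\db,\lb,\db,\xb)$, and similarly for $v$. Lemma~\ref{deg_sum_prop}(ii)--(iii) therefore yields $D_{X_{i-1}}(\alpha_u(\db,\lb,\db,\xb_{i-1})+2)\le D(\gamma(\db,\lb,\db,\xb)+2)$, and exactly as in part~\textbf{(A)} we get $g(\db,X_{i-1},L,e_i)\ge 1-\rho(k)>0$. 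For the factor $\phi(\db,X)$, use $\sum_{uv\in X}d_ud_v\le m(X)\varDelta_{\partial X}(\db)^2$ to get $\phi(\db,X)\le 1+\tfrac{\varLambda(k)}{2m(X)}$, whence $\phi(\db,X)^{-m(X)}\ge e^{-\varLambda(k)/2}$. Feeding these bounds into Theorem~\ref{multiple_edge_thm}\textbf{(B)} gives $\Pa(X\subseteq G)\ge \Pi(X)\,e^{-\varLambda(k)/2}(1-\rho(k))^{m(X)}$, and using $(1-\rho(k))^{m(X)}\ge\exp\bigl(-\tfrac{m(X)\rho(k)}{1-\rho_0}\bigr)$ together with the matching dichotomy for $e^{-\varLambda(k)/2}$ produces the two stated cases.

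The computations are routine; the one step that needs genuine care is the chain $\alpha_u(\db,\lb,\db,\xb_{i-1})\le\gamma(\db,\lb,\db,\xb)$, where one must keep in mind that the $\alpha$-value inside $g(\db,X_{i-1},L,e_i)$ is evaluated at the partial degree sequence $\xb_{i-1}$ rather than $\xb$ and must apply the monotonicity of the normalized degree sum in the correct direction before invoking the uniform bound by $\gamma$. A secondary, purely bookkeeping, point is to propagate the $o(1)$-versus-$O(1)$ dichotomy consistently through the product of the three multiplicative error factors.
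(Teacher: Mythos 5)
Your proposal is correct and follows essentially the same route as the paper: verify $f(\db,X_{i-1},L,e_i)\ge 1-\rho(k)$ and $g(\db,X_{i-1},L,e_i)\ge 1-\rho(k)$ via Lemma~\ref{deg_sum_prop} and the chain $\alpha_{w}(\db,\lb,\db,\xb_{i-1})\le\gamma(\db,\lb,\db,\xb)$, apply Theorem~\ref{multiple_edge_thm}, and control $\phi(\db,X)^{m(X)}$ through $\varLambda(k)$. The only difference is that you bound each factor by $1-\rho(k)$ directly instead of the paper's slightly sharper route through Jensen's inequality and the averaged quantities $\kappa$ and $\mu$ (which the paper keeps only to enable refinements such as Example~3); for the corollary as stated your cruder bound suffices.
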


\begin{rmk}
    \textbf{Upper Bound Improvement:} 
    Let $\varDelta = \varDelta(\db)$. An asymptotic upper bound on $\Pa(X \subseteq G)$ to a factor of $1+o(1)$ can be obtained from \cite[Theorem 4]{gao22} under the two assumptions: (1) $m(X) \left( D(\varDelta) + \varDelta + \varDelta \cdot \varDelta(\lb) \right) = o(m(G) - m(X))$ and (2) $m(X) m(L)\varDelta^2 = o((m(G)-m(X))^2)$. Corollary \ref{multiple_edge_cor} obtains the same upper bound under a condition implied solely by (1). Consequently, we can take $m(L)$ to be substantially larger. \textbf{Lower Bound Improvement:} A lower bound on $\Pa(X \subseteq G)$ to a factor of $1+o(1)$ can be obtained from \cite[Theorem 4]{gao22} under the assumptions: (1) $m(X) \left(D(\varDelta) + \varDelta + \varDelta \cdot \varDelta(\lb)\right) = o(m(G) - m(X))$ and (2) $m(X) \varDelta_{\partial X}(\db)^2 = o(m(G)-m(X))$. Corollary \ref{multiple_edge_cor} obtains the same lower bound assuming (2) holds along with the weaker assumption $m(X) D(\gamma(\db,\lb,\db,\xb)+2) = o(m(G)-m(X))$, which is implied by (1) as $\gamma(\db,\lb,\db,\xb) \leq \varDelta(\db + \lb)$.
\end{rmk}

\subsection*{Forbidden Edge Theorems}
The following theorem bounds the probability that the edges of $Y$ are forbidden given the edges of $L_0$ are forbidden.
\begin{thm}[Multiple Forbidden Edges]\label{multiple_forbidden_theorem}
    Let $L_0,Y \subseteq K_n$, where $L_0 \cap Y = \emptygraph$. Choose vertices $p_1,\dots,p_{m(Y)}, q_1,\dots,q_{m(Y)} \in W$ such that $\ebar_j = p_jq_j$ and $\{\ebar_j\}_{j=1}^{m(Y)}$ enumerates the edges of $Y$. For $1 \leq j \leq m(Y)$, define $L_j \coloneq L_0 + \ebar_1 + \dots + \ebar_j$. Let $\lb$ and $\yb$ be the degree sequences of $L_0$ and $Y$, respectively. Let $G \sim \Gs_{\db}(\emptygraph,L_0)$.
\begin{enumerate}[label=\textbf{(\Alph*)}, leftmargin=*]
\item  If $g(\db,\emptygraph,L_{j-1},\ebar_j)>0$ for all $j$,  then
    \begin{equation*}
       \hspace{-4ex} \Pa(Y \cap G = \emptygraph) \leq \prod_{j=1}^{m(Y)} \left(1 + \frac{d_{p_j}d_{q_j}}{2m(G)}\right)^{-1} \hspace{0.07cm} \cdot \prod_{j=1}^{m(Y)} \frac{1}{g(\db, \emptygraph, L_{j-1}, \ebar_j)}.
    \end{equation*}
\item If $f(\db,\emptygraph,L_{j-1},\ebar_j) > 0$ for all $j$, then
    \begin{equation*}
       \hspace{-4ex}  \Pa(Y \cap G = \emptygraph) \geq \prod_{j=1}^{m(Y)} \left(1+\frac{d_{p_j} d_{q_j}}{2m(G)}\right)^{-1} \hspace{0.07cm} \cdot  \prod_{j=1}^{m(Y)} f(\db,\emptygraph,L_{j-1},\ebar_j).
    \end{equation*}
\end{enumerate}
\end{thm}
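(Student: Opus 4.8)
\textbf{Proof plan for Theorem \ref{multiple_forbidden_theorem}.}

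The strategy is to reduce the forbidden-edge statement to the already-proved multiple-edges statement (Theorem \ref{multiple_edge_thm}) by a telescoping argument over the edges $\ebar_1,\dots,\ebar_{m(Y)}$. The key observation is the identity
\begin{equation*}
\Pa(Y\cap G=\emptygraph)=\frac{|\Gs_\db(\emptygraph,L_0\cup Y)|}{|\Gs_\db(\emptygraph,L_0)|}=\prod_{j=1}^{m(Y)}\frac{|\Gs_\db(\emptygraph,L_j)|}{|\Gs_\db(\emptygraph,L_{j-1})|},
\end{equation*}
so it suffices to bound each factor $|\Gs_\db(\emptygraph,L_j)|/|\Gs_\db(\emptygraph,L_{j-1})|$, i.e.\ the probability that $\ebar_j=p_jq_j$ is absent in a random graph $G_{j-1}\sim\Gs_\db(\emptygraph,L_{j-1})$. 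Since $L_{j-1}=L_j\setminus\{\ebar_j\}$ and $\ebar_j\notin L_{j-1}$, this is exactly $1-\Pa(\ebar_j\in G_{j-1})$, and $\Pa(\ebar_j\in G_{j-1})$ is controlled by Lemma \ref{single_edge_prob_lemma} with $H=\emptygraph$, $L=L_{j-1}$, $uv=\ebar_j$. Here $m(H)=0$ so $2m(G)-2m(H)=2m(G)$ and $(d_u-h_u)(d_v-h_v)=d_{p_j}d_{q_j}$, which is why the stated product features the factor $(1+d_{p_j}d_{q_j}/(2m(G)))^{-1}$.

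For part (A), I would use the lower bound (B) of Lemma \ref{single_edge_prob_lemma}: if $g(\db,\emptygraph,L_{j-1},\ebar_j)>0$ then
\begin{equation*}
\Pa(\ebar_j\in G_{j-1})\geq\left(1+\frac{2m(G)}{d_{p_j}d_{q_j}}\cdot\frac{1}{g(\db,\emptygraph,L_{j-1},\ebar_j)}\right)^{-1},
\end{equation*}
so that $\Pa(\ebar_j\notin G_{j-1})=1-\Pa(\ebar_j\in G_{j-1})$ is at most $1-(1+\tfrac{2m(G)}{d_{p_j}d_{q_j}g_j})^{-1}=(1+\tfrac{d_{p_j}d_{q_j}g_j}{2m(G)})^{-1}\le (1+\tfrac{d_{p_j}d_{q_j}}{2m(G)})^{-1}\cdot\tfrac{1}{g_j}$ after an elementary manipulation (writing $g_j=g(\db,\emptygraph,L_{j-1},\ebar_j)$ and using $g_j\le 1$, which holds because $D_H\ge0$). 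Taking the product over $j$ gives (A). For part (B), I would symmetrically use the upper bound (A) of Lemma \ref{single_edge_prob_lemma}: $\Pa(\ebar_j\in G_{j-1})\le(1+\tfrac{2m(G)}{d_{p_j}d_{q_j}}f_j)^{-1}$ with $f_j=f(\db,\emptygraph,L_{j-1},\ebar_j)>0$, hence $\Pa(\ebar_j\notin G_{j-1})\ge(1+\tfrac{d_{p_j}d_{q_j}}{2m(G)f_j})^{-1}\ge(1+\tfrac{d_{p_j}d_{q_j}}{2m(G)})^{-1}\cdot f_j$, again using $f_j\le1$; the product over $j$ yields (B).

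The only genuinely delicate point is the elementary inequality converting a bound on $\Pa(\ebar_j\in G_{j-1})$ of the form $(1+A/B)^{-1}$ into the claimed bound on the complementary probability $1-\Pa(\ebar_j\in G_{j-1})$, where one must be careful about the direction of inequalities and about the roles of $f_j$ and $g_j$ relative to $1$; I expect this to be the main (though still routine) obstacle, and it is where the hypotheses $g_j>0$ respectively $f_j>0$ are used to keep all denominators positive. Everything else is the telescoping identity and bookkeeping. A minor subtlety worth flagging is that the enumeration order of the edges of $Y$ matters for the intermediate sets $L_{j-1}$ (and hence for the $f_j,g_j$), but the final product is over all of $Y$, so the stated bounds hold for any fixed choice of ordering $\{\ebar_j\}$.
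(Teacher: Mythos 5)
Your proposal is correct and takes essentially the same route as the paper: the paper telescopes over the edges of $Y$ and bounds each factor $\bigl(1+|\Gs_{\db}(\ebar_j,L_{j-1})|/|\Gs_{\db}(\emptygraph,L_{j-1}+\ebar_j)|\bigr)^{-1}$ directly via Lemma \ref{ratio_bound_lemma} together with $f_j,g_j\le 1$, which is exactly your argument once you note that your use of Lemma \ref{single_edge_prob_lemma} is just that ratio bound repackaged through the same partition identity. Your elementary manipulations converting the single-edge bounds into the stated per-edge factors are the same inequalities the paper uses, so no gap remains.
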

We can impose conditions on the products in Theorem \ref{multiple_forbidden_theorem} to obtain the following corollary.

\begin{cor}[Multiple Forbidden Edges - Simplified]\label{multiple_forbidden_cor}
  Consider the same setup as Theorem \ref{multiple_forbidden_theorem}. Let $\rho : \N \to (0,1)$ be any function bounded away from $1$ and define
% Let $\lb$ and $\yb$ be the degree sequences of $L_0$ and $Y$, respectively. Let $\rho : \N \to (0,1)$ be any function bounded away from $1$. Define
  \begin{equation*}
      \Phi(Y) \coloneq \prod_{j=1}^{m(Y)} \left(1+\frac{d_{p_j} d_{q_j}}{2m(G)}\right)^{-1} .
  \end{equation*}
\begin{enumerate}[label=\textbf{(\Alph*)}, leftmargin=*]
\item If as $k\to\infty$, $D(\gamma(\db,\lb+\yb,\db,\zerob)+2) \leq \rho(k) \cdot m(G)$, then
\begin{equation*}
\Pa(Y \cap G = \emptygraph) \leq  \Phi(Y) \cdot \begin{cases}
1 + o(1), & \text{if } m(Y)\cdot \rho(k) = o(1); \\
O(1), & \text{if } m(Y) \cdot \rho(k) = O(1).
\end{cases}
\end{equation*}
\item If as $k \to\infty$, $D(\varDelta_{\partial Y}(\db +\lb + \yb)) \leq \rho(k) \cdot m(G)$, then
\begin{equation*}
\Pa(Y \cap G = \emptygraph) \geq \Phi(Y) \cdot 
\begin{cases}
1 + o(1), & \text{if } m(Y)\cdot \rho(k) = o(1); \\
\Omega(1), & \text{if } m(Y) \cdot \rho(k) = O(1).
\end{cases}
\end{equation*}
\end{enumerate}
\end{cor}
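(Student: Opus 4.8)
The plan is to deduce the corollary directly from Theorem~\ref{multiple_forbidden_theorem} by showing that, under the stated hypotheses, the correction products $\prod_{j=1}^{m(Y)} g(\db,\emptygraph,L_{j-1},\ebar_j)^{-1}$ and $\prod_{j=1}^{m(Y)} f(\db,\emptygraph,L_{j-1},\ebar_j)$ are each $1+o(1)$ (respectively $O(1)$ and $\Omega(1)$) in the two regimes, and in particular that the positivity conditions of that theorem hold. Fix $\delta\in(0,1)$ with $\rho(k)\le 1-\delta$ for all $k$; this exists because $\rho$ is bounded away from~$1$. The structural fact used throughout is that $L_{j-1}=L_0+\ebar_1+\dots+\ebar_{j-1}\subseteq L_0\cup Y$, so if $\lb_{j-1}$ denotes the degree sequence of $L_{j-1}$ then $\lb_{j-1}\le\lb+\yb$ coordinatewise, and moreover the degree of $p_j$ (and of $q_j$) in $\db+\lb_{j-1}$ is at most the corresponding entry of $\db+\lb+\yb$. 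Since every $H$ appearing here is $\emptygraph$, we have $D_{\emptygraph}=D(\db,\cdot)=D(\cdot)$ and $2m(G)-2m(\emptygraph)=2m(G)$.

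For part~(A), I would first bound each $g$-term from below. By the uniform bound $\alpha_w\le\gamma$, then Lemma~\ref{deg_sum_prop}(iii) applied with $\db+\lb_{j-1}\le\db+\lb+\yb$, and then the monotonicity of $D$ in Lemma~\ref{deg_sum_prop}(ii),
$$
\alpha_{p_j}(\db,\lb_{j-1},\db,\zerob)\le\gamma(\db,\lb_{j-1},\db,\zerob)\le\gamma(\db,\lb+\yb,\db,\zerob),
$$
so $D(\alpha_{p_j}(\db,\lb_{j-1},\db,\zerob)+2)\le D(\gamma(\db,\lb+\yb,\db,\zerob)+2)$, and likewise for $q_j$. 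Hence $g(\db,\emptygraph,L_{j-1},\ebar_j)\ge 1-\tfrac{D(\gamma(\db,\lb+\yb,\db,\zerob)+2)}{m(G)}\ge 1-\rho(k)\ge\delta>0$, which verifies the hypothesis of Theorem~\ref{multiple_forbidden_theorem}(A) and exhibits each $g$-term as $1-\epsilon_j$ with $0\le\epsilon_j\le\rho(k)$ and $\sum_j\epsilon_j\le m(Y)\rho(k)$. Then $(1-\epsilon_j)^{-1}=e^{-\log(1-\epsilon_j)}\le e^{\epsilon_j/(1-\epsilon_j)}\le e^{\epsilon_j/\delta}$, so $\prod_j g(\db,\emptygraph,L_{j-1},\ebar_j)^{-1}\le e^{m(Y)\rho(k)/\delta}$, which is $1+o(1)$ when $m(Y)\rho(k)=o(1)$ and $O(1)$ when $m(Y)\rho(k)=O(1)$; multiplying by $\Phi(Y)$ gives the claimed upper bound.

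Part~(B) is the symmetric argument using $f$ in place of $g$ and the maximum-degree parameter in place of $\gamma$. Since $p_j,q_j\in\partial Y$, the degree of $p_j$ in $\db+\lb_{j-1}$ is at most $\varDelta_{\partial Y}(\db+\lb+\yb)$, so monotonicity of $D$ gives $f(\db,\emptygraph,L_{j-1},\ebar_j)\ge 1-\tfrac{D(\varDelta_{\partial Y}(\db+\lb+\yb))}{m(G)}\ge 1-\rho(k)\ge\delta>0$, verifying the hypothesis of Theorem~\ref{multiple_forbidden_theorem}(B). Writing each $f$-term as $1-\eta_j$ with $0\le\eta_j\le\rho(k)$ and $\sum_j\eta_j\le m(Y)\rho(k)$, the inequality $\log(1-\eta_j)\ge-\eta_j/(1-\eta_j)\ge-\eta_j/\delta$ yields $\prod_j f(\db,\emptygraph,L_{j-1},\ebar_j)\ge e^{-m(Y)\rho(k)/\delta}$; since each factor is also at most $1$, this product is $1+o(1)$ when $m(Y)\rho(k)=o(1)$ and $\Omega(1)$ when $m(Y)\rho(k)=O(1)$. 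Multiplying by $\Phi(Y)$ finishes the proof.

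I expect the only real obstacle to be the monotonicity bookkeeping in the two reductions: one must carefully track that the per-edge quantities $\alpha_{p_j}(\db,\lb_{j-1},\db,\zerob)$ and the degrees occurring in $\db+\lb_{j-1}$ are dominated by the global parameters $\gamma(\db,\lb+\yb,\db,\zerob)$ and $\varDelta_{\partial Y}(\db+\lb+\yb)$ from the hypotheses, using $L_{j-1}\subseteq L_0\cup Y$, the membership $p_j,q_j\in\partial Y$, and parts (ii)--(iii) of Lemma~\ref{deg_sum_prop}. Once these uniform bounds are in place, passing from the products to the $o(1)/O(1)/\Omega(1)$ conclusions is a routine exponential estimate.
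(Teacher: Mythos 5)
Your proposal is correct and follows essentially the same route as the paper: reduce to Theorem \ref{multiple_forbidden_theorem} by checking positivity of the $f$- and $g$-terms via the monotonicity bounds $\alpha_{p_j}(\db,\lb_{j-1},\db,\zerob)\le\gamma(\db,\lb+\yb,\db,\zerob)$ and $d_{p_j}+l_{j-1,p_j}\le\varDelta_{\partial Y}(\db+\lb+\yb)$, then control the products by an exponential estimate that is uniform because $\rho$ is bounded away from $1$. The only difference is cosmetic: the paper passes through an intermediate Jensen-average step (the quantities $\lambda$ and $\eta$) before relaxing to the same global parameters, whereas you bound each factor directly, which suffices for the stated conclusion.
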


\section{Main Results: Bipartite Random Graphs}\label{bipartite_random_graphs_section}
\begin{defn}[Functions $S$ and $T$]
Let $(\sbb,\tb)$ be a given bipartite degree sequence. Let $H \subseteq K_{m,n}$ be a bipartite graph with degree sequence $(\hb,\ib)$, where $(\sbb,\tb) - (\hb,\ib) \geq 0$. Define
\begin{align*}
    & S_H^A(x) \coloneq D^A(\sbb-\hb,x), && \text{where } A \subsetneq S; \\
    & T_H^B(x) \coloneq D^B(\tb - \ib, x), && \text{where } B \subsetneq T.
\end{align*}
Both functions inherit all properties from Lemma \ref{deg_sum_prop}. If $H = \emptygraph$ and/or $A,B = \emptyset$, we remove it from the notation. For $x \in [m]$, $S(x)$ is the sum of the $x$ largest degrees in $\sbb$. Similarly for $x \in [n]$, $T(x)$ is the sum of the $x$ largest degrees in $\tb$.
\end{defn}

\begin{defn}
    Let $(\sbb,\tb)$ be a bipartite degree sequence and $H,L \subseteq K_{m,n}$ with $H \cap L = \emptygraph$. Let $(\hb,\ib)$ be the degree sequence of $H$ and $(\lb,\mb)$ the degree sequence of $L$. Let $uv \notin H \cup L$ be an edge in $K_{m,n}$. Define 
 \begin{equation*}
        p(\sbb,\tb,H,L,uv) \coloneq 1 - \frac{T_H(s_u + l_u) + S_H(t_v + m_v)}{m(G) - m(H)}
    \end{equation*}
and
\begin{equation*}
    q(\sbb,\tb,H,L,uv) \coloneq 1-\frac{S_H(\alpha_u(\tb,\mb,\sbb,\hb))+T_H(\alpha_v(\sbb,\lb,\tb,\ib))}{m(G) - m(H)}.
\end{equation*}
\end{defn}

\subsection{Single Edge Theorems}
\begin{lemma}[Single Edge Probability - Bipartite Case] \label{single_edge_prob_lemma_bipartite}
    Let $G \sim \Bs_{(\sbb,\tb)}(H,L)$ with\\ $uv \notin H \cup L$. 
    Let $(\hb,\ib)$ be the degree sequence of $H \subseteq K_{m,n}$.
    \begin{enumerate}[label=\textbf{(\Alph*)}, leftmargin=*]
    \item If $p(\sbb,\tb,H,L,uv) > 0$, then
    \begin{equation*}
        \Pa(uv \in G) \leq \left(1 + \frac{m(G) - m(H)}{(s_u - h_u)(t_v-i_v)} \cdot  p(\sbb,\tb,H,L,uv) \right)^{-1}. 
    \end{equation*}
    \item If $q(\sbb,\tb,H,L,uv) > 0$, then
    \begin{equation*}
        \Pa(uv \in G) \geq \left(1+\frac{m(G) - m(H)}{(s_u - h_u)(t_v-i_v)} \cdot \frac{1}{q(\sbb,\tb,H,L,uv)}\right)^{-1}.
    \end{equation*}
    \end{enumerate}
\end{lemma}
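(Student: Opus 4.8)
The plan is to rewrite the statement as a ratio estimate and attack it by edge switchings, in the same way as the generic case (Lemma \ref{single_edge_prob_lemma}). Put $N_1 := |\Bs_{(\sbb,\tb)}(H+uv,L)|$ and $N_0 := |\Bs_{(\sbb,\tb)}(H,L+uv)|$; since every member of $\Bs_{(\sbb,\tb)}(H,L)$ either contains the edge $uv$ or avoids it, $\Pa(uv\in G)=N_1/(N_0+N_1)=(1+N_0/N_1)^{-1}$, so the lemma is equivalent to the two--sided bound
\begin{equation*}
\frac{(m(G)-m(H))\,p}{(s_u-h_u)(t_v-i_v)}\ \le\ \frac{N_0}{N_1}\ \le\ \frac{m(G)-m(H)}{(s_u-h_u)(t_v-i_v)\,q},
\end{equation*}
with $p=p(\sbb,\tb,H,L,uv)$ and $q=q(\sbb,\tb,H,L,uv)$: the left bound gives (A), the right bound gives (B). I will use throughout that, because $uv\notin H\cup L$ and $uv$ is absent from every graph counted by $N_0$, the numbers $s_u-h_u$ and $t_v-i_v$ are the \emph{exact} degrees of $u$ and $v$ in $G''\setminus H$ for $G''\in\Bs_{(\sbb,\tb)}(H,L+uv)$.

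For the left bound I would run a $2$-switching: from $G'\in\Bs_{(\sbb,\tb)}(H+uv,L)$ choose an edge $xy'\in G'\setminus H$ with $uy'\notin G'\cup L$ and $xv\notin G'\cup L$ (these two conditions already force $x\neq u$ and $y'\neq v$), delete $uv$ and $xy'$, and insert $uy'$ and $xv$; the output lies in $\Bs_{(\sbb,\tb)}(H,L+uv)$. The forward count is at least the number $m(G)-m(H)$ of edges of $G'\setminus H$, minus the bad ones: those $xy'$ with $xv\in G'\cup L$ number at most $\sum_{x:\,xv\in G'\cup L}(s_x-h_x)\le S_H(t_v+m_v)$ (at most $t_v+m_v$ admissible vertices $x$, each contributing at most $s_x-h_x$ choices of $y'$, summed against the $t_v+m_v$ largest entries of $\sbb-\hb$), and symmetrically those with $uy'\in G'\cup L$ number at most $T_H(s_u+l_u)$; hence the forward count is at least $(m(G)-m(H))\,p$. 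A reverse switch from $G''\in\Bs_{(\sbb,\tb)}(H,L+uv)$ is fixed by the choice of the two edges $uy'\in G''\setminus H$ and $xv\in G''\setminus H$ to be deleted, so there are at most $(s_u-h_u)(t_v-i_v)$ of them. Double counting switch pairs gives $N_1(m(G)-m(H))\,p\le N_0(s_u-h_u)(t_v-i_v)$, which is the left bound; the monotonicity and subgraph-comparison facts for $S_H,T_H$ that are invoked are exactly Lemma \ref{deg_sum_prop}.

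For the right bound the $2$-switching is too wasteful, since a reverse switch from $G''$ may have far fewer than $(s_u-h_u)(t_v-i_v)$ preimages, so I would use a $3$-switching: from $G'\in\Bs_{(\sbb,\tb)}(H+uv,L)$ pick two edges $x_1y_1,x_2y_2\in G'\setminus H$, delete $uv,x_1y_1,x_2y_2$, and insert $x_1v,\,x_2y_1,\,uy_2$, requiring the three inserted pairs to miss $G'\cup L$ together with the obvious distinctness of the six vertices and the edges involved. The forward count is at most $(m(G)-m(H))^2$. A reverse switch from $G''\in\Bs_{(\sbb,\tb)}(H,L+uv)$ is specified by edges $x_1v\in G''\setminus H$ ($t_v-i_v$ choices of $x_1$), $uy_2\in G''\setminus H$ ($s_u-h_u$ choices of $y_2$) and $x_2y_1\in G''\setminus H$ ($\le m(G)-m(H)$ choices), from which one subtracts the degenerate triples with $x_1y_1\in G''\cup L$ or $x_2y_2\in G''\cup L$ (and the lower-order vertex/edge coincidences). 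Grouping the degenerate triples by their shared vertex, those with $x_2y_2\in G''\cup L$ number at most $(t_v-i_v)\sum_{y_2}S_H(t_{y_2}+m_{y_2})$, the sum running over the $s_u-h_u$ neighbours $y_2$ of $u$ in $G''\setminus H$; by concavity of $S_H$ (Lemma \ref{deg_sum_prop}(ii)) and monotonicity of $x\mapsto D(\cdot,x)/x$ this is at most $(t_v-i_v)(s_u-h_u)\,S_H(\alpha_u(\tb,\mb,\sbb,\hb))$, and symmetrically the triples with $x_1y_1\in G''\cup L$ number at most $(t_v-i_v)(s_u-h_u)\,T_H(\alpha_v(\sbb,\lb,\tb,\ib))$. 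Hence the reverse count is at least $(s_u-h_u)(t_v-i_v)(m(G)-m(H))\,q$, and double counting gives $N_0(s_u-h_u)(t_v-i_v)(m(G)-m(H))\,q\le N_1(m(G)-m(H))^2$, which rearranges to the right bound.

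The step I expect to be the real obstacle is the degenerate-triple estimate in the $3$-switching: one must (i) check that the switching never deletes an edge of $H$, never meets $L$, and always lands in $\Bs_{(\sbb,\tb)}(H,L+uv)$; (ii) enumerate and discard the lower-order coincidences among $u,v,x_1,x_2,y_1,y_2$ and among the three deleted and three inserted edges, verifying they do not spoil the stated inequality; and, above all, (iii) carry out the rearrangement that turns a sum $\sum_w S_H(a_w)$ into $(\#\text{terms})\cdot S_H(\overline a)$ and recognises $\overline a$ as exactly the $\alpha$-value occurring in $q$, which is what makes the bound collapse to the clean form stated instead of to a cruder maximum-degree bound. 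Everything on the $2$-switching side, and the observation that $x\neq u$, $y'\neq v$ come for free, is routine by comparison.
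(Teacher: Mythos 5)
Your proposal is correct and follows essentially the same route as the paper: the partition identity reduces the lemma to the two-sided ratio bound of Lemma \ref{ratio_bound_lemma_bipartite}, which you prove with the same 2-switching (upper bound) and 3-switching (lower bound) and the same Jensen/$\alpha$-value bookkeeping, the only differences being cosmetic (you keep $H$ explicit instead of substituting $(\sbb,\tb)\mapsto(\sbb,\tb)-(\hb,\ib)$, $M\mapsto H\cup L$, and you absorb the paper's ``terrible'' cases $x_2=u$, $y_1=v$ into the bad-edge counts, which is legitimate since, e.g., $x_2=u$ forces $x_2y_2\in G''$).
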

\subsection{Multiple Edge Theorems}

\begin{thm}[Multiple Edges - Bipartite Case]\label{multiple_edges_bipartite}
    Let $G \sim \Bs_{(\sbb,\tb)}(\emptygraph,L)$ and $X \cap L = \emptygraph$. Let $\{e_i\}_{i=1}^{m(X)}$ be an enumeration of the edges of $X$. Let $X_0 \coloneq \emptygraph$ and $X_i \coloneq X_0 + e_1 + e_2 + \dots + e_i$ for $1\leq i\leq m(X)$. Let $(\xb,\yb)$ be the degree sequence of $X$.
\begin{enumerate}[label=\textbf{(\Alph*)}, leftmargin=*]
    \item If $p(\sbb,\tb,X_{i-1},L,e_i) > 0$ for all $i$, then
    \begin{equation*}
       \hspace{-1cm} \Pa(X \subseteq G) \leq \frac{\prod_{i \in S} [s_i]_{x_i} \prod_{j \in T} [t_j]_{y_{j}}  }{ [m(G)]_{m(X)}} \cdot \prod_{i=1}^{m(X)} \frac{1}{p(\sbb,\tb, X_{i-1}, L, e_i)}.
    \end{equation*}
    \item If $q(\sbb,\tb,X_{i-1},L,e_i)>0$ for all $i$,  then
   \begin{equation*}
        \Pa(X \subseteq G) \geq \frac{\prod_{i \in S} [s_i]_{x_i} \prod_{j \in T} [t_j]_{y_{j}}}{[m(G)]_{m(X)}} \cdot \prod_{i=1}^{m(X)} \frac{q(\sbb,\tb, X_{i-1}, L, e_i)}{\phi'(\db,X)},
    \end{equation*}
    where $\phi' (\db,X) \coloneq 1 + \frac{\frac{1}{m(X)} \cdot \sum_{uv \in X}s_ut_v }{m(G) - m(X)}.$
\end{enumerate}
\end{thm}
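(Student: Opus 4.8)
The plan is to derive both bounds by a chain-rule (telescoping) argument that reduces everything to repeated applications of the single-edge bipartite estimate, Lemma~\ref{single_edge_prob_lemma_bipartite}. Write $e_i = u_iv_i$ with $u_i\in S$ and $v_i\in T$, and let $x_{i-1,w}$ denote the degree of a vertex $w$ in $X_{i-1}$. Since $G\sim\Bs_{(\sbb,\tb)}(\emptygraph,L)$ is uniform, conditioning it on the event $\{X_{i-1}\subseteq G\}$ produces exactly the uniform measure on $\Bs_{(\sbb,\tb)}(X_{i-1},L)$, and $e_i\notin X_{i-1}\cup L$ because $X\cap L=\emptygraph$ and the $e_i$ are distinct. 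Assuming $\Bs_{(\sbb,\tb)}(X,L)\neq\emptyset$ (so that every intermediate conditioning has positive probability; this also gives $x_{i-1,u_i}<x_{u_i}\le s_{u_i}$ and $x_{i-1,v_i}<y_{v_i}\le t_{v_i}$), the multiplication rule yields
\[
\Pa(X\subseteq G)=\prod_{i=1}^{m(X)}\Pa\bigl(e_i\in G\mid X_{i-1}\subseteq G\bigr)=\prod_{i=1}^{m(X)}\Pa(e_i\in G_i),\qquad G_i\sim\Bs_{(\sbb,\tb)}(X_{i-1},L),
\]
where moreover $m(X_{i-1})=i-1$.

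For part~(A), apply Lemma~\ref{single_edge_prob_lemma_bipartite}(A) with $H=X_{i-1}$ to each factor; the hypothesis $p(\sbb,\tb,X_{i-1},L,e_i)>0$ is exactly what is required. Using $(1+a)^{-1}\le a^{-1}$ for $a>0$ gives
\[
\Pa(e_i\in G_i)\le\frac{(s_{u_i}-x_{i-1,u_i})(t_{v_i}-x_{i-1,v_i})}{(m(G)-i+1)\,p(\sbb,\tb,X_{i-1},L,e_i)}.
\]
Multiplying over $i$, the factors $m(G)-i+1$ combine into $[m(G)]_{m(X)}$, while the degree factors telescope vertex by vertex: for a fixed $u\in S$, as $i$ ranges over the indices with $u_i=u$ the quantities $x_{i-1,u}$ run through $0,1,\dots,x_u-1$, so $\prod_{i:\,u_i=u}(s_u-x_{i-1,u})=[s_u]_{x_u}$, and likewise on the $T$-side. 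This produces the claimed upper bound.

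For part~(B), apply Lemma~\ref{single_edge_prob_lemma_bipartite}(B) to each factor. Setting $q_i\coloneq q(\sbb,\tb,X_{i-1},L,e_i)$ and $a_i\coloneq\frac{(s_{u_i}-x_{i-1,u_i})(t_{v_i}-x_{i-1,v_i})\,q_i}{m(G)-i+1}$, the lemma gives $\Pa(e_i\in G_i)\ge(1+a_i^{-1})^{-1}=a_i/(1+a_i)$. Since $q_i\le 1$, $s_{u_i}-x_{i-1,u_i}\le s_{u_i}$, $t_{v_i}-x_{i-1,v_i}\le t_{v_i}$ and $m(G)-i+1\ge m(G)-m(X)$, we have $a_i\le a_i'\coloneq s_{u_i}t_{v_i}/(m(G)-m(X))$, hence $\Pa(e_i\in G_i)\ge a_i(1+a_i')^{-1}$. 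Taking the product, the $a_i$ contribute — after the same telescoping as in part~(A) — the factor $\frac{\prod_{u\in S}[s_u]_{x_u}\prod_{v\in T}[t_v]_{y_v}}{[m(G)]_{m(X)}}\cdot\prod_i q_i$, and what remains is $\prod_{i}(1+a_i')^{-1}$. By the AM--GM inequality, $\prod_{i=1}^{m(X)}(1+a_i')\le\bigl(1+\tfrac1{m(X)}\sum_i a_i'\bigr)^{m(X)}=\phi'(\db,X)^{m(X)}$, because $\tfrac1{m(X)}\sum_i a_i'=\tfrac{1}{m(G)-m(X)}\cdot\tfrac1{m(X)}\sum_{uv\in X}s_ut_v$. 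Combining the pieces gives the stated lower bound.

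The only step that is not pure bookkeeping is the lower-bound manipulation in part~(B): one must discard the $i$-dependence of $m(G)-i+1$ and of the reduced degrees — so that the per-edge factors telescope cleanly — while keeping the genuinely useful factor $q_i$ intact, and then use concavity through AM--GM to replace $\prod_i(1+a_i')$ by a single power of the averaged quantity $\phi'(\db,X)$. Everything else is the same telescoping identity and vertex-wise falling-factorial collapse used for the generic statement, Theorem~\ref{multiple_edge_thm}.
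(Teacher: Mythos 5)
Your proposal is correct and follows essentially the same route as the paper: the paper telescopes $\Pa(X\subseteq G)$ as a product of cardinality ratios $|\Bs_{(\sbb,\tb)}(X_i,L)|/|\Bs_{(\sbb,\tb)}(X_{i-1},L)|$ and applies the switching bounds of Lemma~\ref{ratio_bound_lemma_bipartite}, which is exactly your chain-rule decomposition with Lemma~\ref{single_edge_prob_lemma_bipartite} applied to each conditional factor, followed by the same falling-factorial collapse, the same relaxations ($q_i\le 1$, degrees and $m(X_{i-1})$ relaxed only in the denominator), and the same AM--GM step producing $\phi'(\db,X)^{m(X)}$. The only cosmetic difference is that in Part~(A) the paper uses $\prod_i(1+a_i)^{-1}\le(1+\prod_i a_i)^{-1}$ and then $\frac{1}{1+1/x}\le x$, whereas you bound each factor by $a_i^{-1}$ directly; the resulting bound is identical.
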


\begin{cor}[Multiple Edges Bipartite - Simplified]\label{multiple_edge_cor_bipartite} Let $G \sim \Bs_{(\sbb,\tb)}(\emptygraph,L)$ with $X \cap L = \emptygraph$. Let $(\xb,\yb)$ be the degree sequence of $X$ and $(\lb,\mb)$ the degree sequence of $L$. Let $\rho : \N \to (0,1)$ be any function bounded away from $1$. Define
\begin{equation*}
    \Pi'(X) \coloneq \frac{\prod_{i \in S} [s_i]_{x_i} \prod_{j \in T} [t_j]_{y_{j}}  }{ [m(G)]_{m(X)}}.
\end{equation*}

\begin{enumerate}[label=\textbf{(\Alph*)}, leftmargin=*]
\item If as $k\to\infty$, $S(\varDelta_{\partial X}(\tb+\mb)) + T(\varDelta_{\partial X}(\sbb+\lb))  \leq \rho(k) \cdot  \left( m(G) - m(X) \right)$, then
\begin{equation*}
\Pa(X \subseteq G) \leq \Pi'(X) \cdot 
\begin{cases}
1 + o(1), & \text{if } m(X)\cdot \rho(k) = o(1); \\
O(1), & \text{if } m(X) \cdot \rho(k) = O(1).
\end{cases}
\end{equation*}
\item If as $k \to \infty$, $S(\gamma(\tb,\mb,\sbb,\xb)) + T(\gamma(\sbb,\lb,\tb,\yb)) \leq \rho(k) \cdot  \left(m(G) - m(X)\right)$, then
\begin{equation*}
\Pa(X \subseteq G) \geq \Pi'(X) \cdot  
\begin{cases}
1 + o(1), & \text{if } m(X) \cdot \rho(k) = o(1) \text{ and } \varLambda'(k) = o(1);  \\
\Omega(1), & \text{if } m(X) \cdot \rho(k) = O(1) \text{ and }  \varLambda'(k) = O(1),
\end{cases}
\end{equation*}
where $\varLambda'(k) \coloneq \frac{m(X)\cdot \varDelta_{\partial X}(\sbb) \varDelta_{\partial X}(\tb)}{m(G) - m(X)}$.
\end{enumerate}
\end{cor}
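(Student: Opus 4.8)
The plan is to deduce Corollary~\ref{multiple_edge_cor_bipartite} from Theorem~\ref{multiple_edges_bipartite} by controlling the two product correction factors $\prod_i p(\sbb,\tb,X_{i-1},L,e_i)^{-1}$ and $\prod_i q(\sbb,\tb,X_{i-1},L,e_i)/\phi'(\db,X)$ under the stated hypotheses, exactly paralleling the derivation of Corollary~\ref{multiple_edge_cor} in the generic case. The key observation is that $\Pi'(X)$ is the main term in both bounds of Theorem~\ref{multiple_edges_bipartite}, so it suffices to show that under hypothesis~(A) the upper-bound product is $1+o(1)$ (resp.\ $O(1)$), and under hypothesis~(B) the lower-bound product is $1+o(1)$ (resp.\ $\Omega(1)$).

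For part~(A): first I would verify that each factor $p(\sbb,\tb,X_{i-1},L,e_i)$ is positive and bounded below by $1-\rho(k)$ for all $i$, so that Theorem~\ref{multiple_edges_bipartite}(A) applies. This uses the monotonicity and concavity of the degree sum functions from Lemma~\ref{deg_sum_prop}: since $X_{i-1}\subseteq X$, property~(iii) gives $T_{X_{i-1}}(s_u+l_u)\le T(s_u+l_u)\le T(\varDelta_{\partial X}(\sbb+\lb))$ once one checks $s_u+l_u\le \varDelta_{\partial X}(\sbb+\lb)$ for $u\in\partial X$ (and similarly with $S$ and $\tb+\mb$), and $m(G)-m(X_{i-1})\ge m(G)-m(X)$. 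Then $\sum_{u\in\partial X}(s_u+l_u)$ needs to be re-expressed; note that $D^A$ applied at the relevant arguments is dominated by the stated quantity $S(\varDelta_{\partial X}(\tb+\mb))+T(\varDelta_{\partial X}(\sbb+\lb))$. With each factor at least $1-\rho(k)$, the product of reciprocals is at most $(1-\rho(k))^{-m(X)}=\exp(O(m(X)\rho(k)))$ using $-\log(1-x)\le x/(1-x)$ and $\rho$ bounded away from $1$; this is $1+o(1)$ when $m(X)\rho(k)=o(1)$ and $O(1)$ when $m(X)\rho(k)=O(1)$, giving the two cases.

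For part~(B): I would first bound $q(\sbb,\tb,X_{i-1},L,e_i)\ge 1-\rho(k)$ by the same mechanism, now using $\alpha_u\le\gamma$ (the uniform bound on the $\alpha$-value) together with the monotonicity of $S_{X_{i-1}},T_{X_{i-1}}$ and the hypothesis $S(\gamma(\tb,\mb,\sbb,\xb))+T(\gamma(\sbb,\lb,\tb,\yb))\le\rho(k)(m(G)-m(X))$; one has to be slightly careful that the $\alpha$-values in $q$ are taken with respect to $X_{i-1}$ rather than $X$, but since $X_{i-1}\subseteq X$ the degree sequence $\sbb-\xb_{i-1}$ dominates $\sbb-\xb$ and Lemma~\ref{deg_sum_prop}(iii) pushes the bound the right way. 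Then $\prod_i q(\cdots)\ge(1-\rho(k))^{m(X)}=1-O(m(X)\rho(k))$. Separately, $\phi'(\db,X)=1+\frac{1}{m(X)}\sum_{uv\in X}s_ut_v/(m(G)-m(X))\le 1+\varDelta_{\partial X}(\sbb)\varDelta_{\partial X}(\tb)\cdot m(X)/(m(X)(m(G)-m(X)))$; wait—more carefully, $\frac{1}{m(X)}\sum_{uv\in X}s_ut_v\le\varDelta_{\partial X}(\sbb)\varDelta_{\partial X}(\tb)$, so $\phi'(\db,X)^{-m(X)}\ge(1+\varLambda'(k)/m(X))^{-m(X)}\ge\exp(-\varLambda'(k))\cdot(1+o(1))$ via $(1+a/N)^{-N}\to e^{-a}$. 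Hence the whole product is $(1-o(1))$ when $m(X)\rho(k)=o(1)$ and $\varLambda'(k)=o(1)$, and $\Omega(1)$ when both are $O(1)$, which is exactly the claim.

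The main obstacle I anticipate is purely bookkeeping: matching the arguments of the degree sum functions $S_{X_{i-1}},T_{X_{i-1}}$ appearing inside $p$ and $q$ (which involve the \emph{residual} degree sequences $\sbb-\xb_{i-1}$, $\tb-\yb_{i-1}$ and vertex degrees $s_u+l_u$ etc.) to the clean quantities $S(\varDelta_{\partial X}(\tb+\mb))$, $T(\gamma(\sbb,\lb,\tb,\yb))$ in the hypotheses, uniformly over all $i$ and all choices of edge $e_i=uv$. This requires repeated, careful application of Lemma~\ref{deg_sum_prop}(ii)--(iii) and the inequality $\alpha_w\le\gamma\le\varDelta$; none of it is deep, but getting every inequality to point in the correct direction (upper bounds for part~(A), lower bounds for part~(B)) is where the care is needed. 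A secondary point is making the asymptotic regime precise: the statement quantifies over $k\to\infty$ with all of $\sbb,\tb,X,L$ depending on $k$, so each "$o(1)$" and "$O(1)$" must be justified from the two scalar hypotheses $m(X)\rho(k)=o(1)$ or $O(1)$ and (for (B)) $\varLambda'(k)=o(1)$ or $O(1)$ alone, with no hidden dependence on $k$ smuggled in.
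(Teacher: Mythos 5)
Your proposal is correct and follows essentially the same route as the paper: deduce the corollary from Theorem \ref{multiple_edges_bipartite} by checking, via Lemma \ref{deg_sum_prop} and the bound $\alpha \le \gamma$, that each factor $p,q$ is at least $1-\rho(k)>0$, then control the products using that $\rho$ is bounded away from $1$, and absorb $\phi'(\db,X)^{m(X)}$ with $1+x\le e^x$ and the hypothesis on $\varLambda'(k)$. The only (cosmetic) difference is that the paper keeps an intermediate Jensen-inequality bound through averaged quantities ($\kappa_S,\kappa_T,\mu_S,\mu_T$) before relaxing to the stated hypotheses, whereas you bound every factor uniformly by $1-\rho(k)$ and use $-\ln(1-\rho(k))=O(\rho(k))$, which loses nothing for the corollary as stated.
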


\begin{rmk}
   We are able to obtain the same probability as McKay in Theorem \ref{mckay81_thm_simplification} under a weaker condition, namely the assumptions of Parts A and B. Indeed, $m(X) \varDelta(\sbb)\varDelta(\tb) = o(m(G) - m(X))$ is enough to obtain the probability to a $(1+o(1))$ error. This follows using $S(\varDelta_{\partial X}(\tb)) + T(\varDelta_{\partial X}(\sbb)) \leq 2 \varDelta(\sbb)\varDelta(\tb)$, $\gamma(\tb,\zerob,\sbb,\xb) \leq \varDelta(\tb)$ and $\gamma(\sbb, \zerob, \tb,\yb) \leq \varDelta(\sbb)$.
\end{rmk}

\subsection{Forbidden Edge Theorems}
The following theorem bounds the probability that the edges of $Y$ are forbidden given that  the edges of $L_0$ are forbidden.
\begin{thm}[Multiple Forbidden Edges - Bipartite Case]\label{multiple_forbidden_theorem_bipartite}
Let $L_0,Y \subseteq K_{m,n}$, where $L_0 \cap Y = \emptygraph$. Let $\{\ebar_{j}\}_{j=1}^{m(Y)}$ be an enumeration of the edges in $Y$, where each $\ebar_j = p_j q_j$ with $p_j \in S$ and $q_j \in T$. For $1 \leq j \leq m(Y)$, define  $L_j \coloneq L_0 + \ebar_1 + \dots + \ebar_j$.  Let $G \sim \Bs_{(\sbb,\tb)}(\emptygraph,L_0)$.
\begin{enumerate}[label=\textbf{(\Alph*)}, leftmargin=*]
\item  If $q(\sbb,\tb,\emptygraph,L_{j-1},\ebar_j)>0$ for all $j$, then
    \begin{equation*}
       \hspace{-4ex} \Pa(Y \cap G = \emptygraph) \leq \prod_{j=1}^{m(Y)} \left(1 + \frac{s_{p_j}t_{q_j}}{m(G)}\right)^{-1} \hspace{0.07cm} \cdot \prod_{j=1}^{m(Y)} \frac{1}{q(\sbb,\tb, \emptygraph, L_{j-1}, \ebar_j)}.
    \end{equation*}
\item If $p(\sbb,\tb,\emptygraph,L_{j-1},\ebar_j) > 0$ for all $j$,  then
    \begin{equation*}
       \hspace{-4ex}  \Pa(Y \cap G = \emptygraph) \geq \prod_{j=1}^{m(Y)} \left(1+\frac{s_{p_j} t_{q_j}}{m(G)}\right)^{-1} \hspace{0.07cm} \cdot  \prod_{j=1}^{m(Y)} p(\sbb,\tb,\emptygraph,L_{j-1},\ebar_j).
    \end{equation*}
\end{enumerate}
\end{thm}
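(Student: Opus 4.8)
The plan is to prove this theorem by the same telescoping argument used for Theorem \ref{multiple_forbidden_theorem} in the generic case, now applied to bipartite graphs. First I would observe that
\begin{equation*}
  \Pa(Y \cap G = \emptygraph) = \prod_{j=1}^{m(Y)} \Pa\bigl(\ebar_j \notin G \mid \ebar_1,\dots,\ebar_{j-1} \notin G\bigr) = \prod_{j=1}^{m(Y)}\bigl(1 - \Pa(\ebar_j \in G')\bigr),
\end{equation*}
where for each $j$ the random graph $G' \sim \Bs_{(\sbb,\tb)}(\emptygraph, L_{j-1})$, since conditioning on $\ebar_1,\dots,\ebar_{j-1}\notin G$ is exactly conditioning on $G \cap L_{j-1} = \emptygraph$ (recall $L_{j-1} = L_0 + \ebar_1 + \dots + \ebar_{j-1}$). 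Thus the whole theorem reduces to plugging the single-edge bounds of Lemma \ref{single_edge_prob_lemma_bipartite} into this product.

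The second step is the algebra of converting each single-edge estimate into the desired form. For Part A (upper bound on $\Pa(Y\cap G = \emptygraph)$) we need a \emph{lower} bound on each $\Pa(\ebar_j \in G')$, so we apply Part B of Lemma \ref{single_edge_prob_lemma_bipartite} with $H = \emptygraph$, $L = L_{j-1}$, $u = p_j$, $v = q_j$: assuming $q(\sbb,\tb,\emptygraph,L_{j-1},\ebar_j) > 0$,
\begin{equation*}
  \Pa(\ebar_j \in G') \geq \left(1 + \frac{m(G)}{s_{p_j} t_{q_j}} \cdot \frac{1}{q(\sbb,\tb,\emptygraph,L_{j-1},\ebar_j)}\right)^{-1},
\end{equation*}
using that $m(H) = 0$, $h_{p_j} = i_{q_j} = 0$. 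Then $1 - \Pa(\ebar_j\in G') \leq 1 - \bigl(1 + \tfrac{m(G)}{s_{p_j}t_{q_j}}\cdot\tfrac{1}{q_j}\bigr)^{-1}$, and a short manipulation (writing $a = \tfrac{s_{p_j}t_{q_j}}{m(G)}$ and $c = q(\sbb,\tb,\emptygraph,L_{j-1},\ebar_j)$, the bound equals $\tfrac{1}{1 + ac} = \tfrac{1}{1+a}\cdot\tfrac{1+a}{1+ac} \le \tfrac{1}{1+a}\cdot\tfrac{1}{c}$ since $c \le 1$ and hence $1 + a \le 1 + ac \cdot \tfrac{1}{c} \cdot \tfrac{1+a}{1+a}$... more cleanly: $\tfrac{1+a}{1+ac} \le \tfrac{1}{c}$ iff $c(1+a) \le 1 + ac$ iff $c \le 1$) gives the factor $\bigl(1+\tfrac{s_{p_j}t_{q_j}}{m(G)}\bigr)^{-1}\cdot\tfrac{1}{q(\sbb,\tb,\emptygraph,L_{j-1},\ebar_j)}$. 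Taking the product over $j$ yields Part A. For Part B (lower bound) we instead need an \emph{upper} bound on each $\Pa(\ebar_j\in G')$, so we apply Part A of Lemma \ref{single_edge_prob_lemma_bipartite}: assuming $p(\sbb,\tb,\emptygraph,L_{j-1},\ebar_j) > 0$,
\begin{equation*}
  \Pa(\ebar_j \in G') \leq \left(1 + \frac{m(G)}{s_{p_j}t_{q_j}}\cdot p(\sbb,\tb,\emptygraph,L_{j-1},\ebar_j)\right)^{-1},
\end{equation*}
so $1 - \Pa(\ebar_j \in G') \geq \bigl(1 + \tfrac{s_{p_j}t_{q_j}}{m(G)}\bigr)^{-1}\cdot p(\sbb,\tb,\emptygraph,L_{j-1},\ebar_j)$, using $\tfrac{a}{1+a/c'} = \tfrac{ac'}{c'+a} \ge \tfrac{a}{1+a}\cdot c'$ iff $c' + a \le (1+a)$... again reducing to $c' \le 1$, where $c' = p(\sbb,\tb,\emptygraph,L_{j-1},\ebar_j)$; note $p \le 1$ always holds since $T_H, S_H \ge 0$. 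Multiplying over $j$ gives Part B.

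I expect the main (minor) obstacle to be the elementary inequality manipulations turning $1 - (1+x)^{-1}$ expressions into clean products, together with checking that the hypotheses needed to invoke Lemma \ref{single_edge_prob_lemma_bipartite} at step $j$ are exactly the stated ones — in particular that $p,q \in (0,1]$ so the $c \le 1$ / $c' \le 1$ step is legitimate, and that the conditioning $G \cap L_{j-1} = \emptygraph$ is well-posed (i.e.\ $\Bs_{(\sbb,\tb)}(\emptygraph, L_{j-1}) \neq \emptyset$), which follows since $\Pa(Y \cap G = \emptygraph) > 0$ is implicitly assumed by our blanket non-emptiness convention. There is no genuinely hard analytic content here beyond Lemma \ref{single_edge_prob_lemma_bipartite}; the theorem is a packaging of the single-edge bounds via a telescoping product, exactly parallel to the generic Theorem \ref{multiple_forbidden_theorem}.
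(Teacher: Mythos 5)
Your proposal is correct and follows essentially the same route as the paper: the paper writes $\Pa(Y \cap G = \emptygraph)$ as the telescoping product $\prod_{j}\bigl(1 + |\Bs_{(\sbb,\tb)}(\ebar_j,L_{j-1})|/|\Bs_{(\sbb,\tb)}(\emptygraph,L_{j-1}+\ebar_j)|\bigr)^{-1}$ and applies Lemma \ref{ratio_bound_lemma_bipartite} together with $p,q\le 1$, which is exactly your decomposition since Lemma \ref{single_edge_prob_lemma_bipartite} is that ratio bound combined with the same partition identity. (Your displayed intermediate expression $\tfrac{a}{1+a/c'}$ in Part B is garbled — the quantity is $\tfrac{c'}{a+c'}$ — but your stated conclusion and its justification via $c'\le 1$ are correct.)
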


\begin{cor}[Multiple Forbidden Edges Bipartite -  Simplified]\label{multiple_forbidden_cor_bipartite} 
Consider the setup of Theorem \ref{multiple_forbidden_theorem_bipartite}. Let $(\lb,\mb)$ be the degree sequence of $L_0$ and $(\yb,\zb)$ the degree sequence of $Y$. Let $\rho : \N \to (0,1)$ be any function bounded away from $1$. Define
\begin{equation*}
    \Phi'(Y) \coloneq \prod_{j=1}^{m(Y)} \left(1+\frac{s_{p_j} t_{q_j}}{m(G)}\right)^{-1}. 
\end{equation*}
\begin{enumerate}[label=\textbf{(\Alph*)}, leftmargin=*]
\item If as $ k \to\infty$, $ S(\gamma(\tb,\mb+\zb,\sbb,\zerob)) + T(\gamma(\sbb,\lb + \yb, \tb, \zerob))\leq \rho(k) \cdot m(G)$, then
\begin{equation*}
\Pa(Y \cap G) \leq \Phi'(Y)  \cdot
\begin{cases}
 1 + o(1), & \text{if } m(Y)\cdot \rho(k) = o(1); \\
O(1), & \text{if } m(Y) \cdot \rho(k) = O(1).
\end{cases}
\end{equation*}
\item If as $k \to\infty$, $ S(\varDelta_{\partial Y}(\tb + \mb + \zb)) + T(\varDelta_{\partial Y}(\sbb+ \lb + \yb)) \leq \rho(k) \cdot m(G)$, then
\begin{equation*}
\Pa(Y \cap G) \geq \Phi'(Y)  \cdot
\begin{cases}
1 + o(1), & \text{if } m(Y)\cdot \rho(k) = o(1); \\
\Omega(1), & \text{if } m(Y) \cdot \rho(k) = O(1).
\end{cases}
\end{equation*}
\end{enumerate}
\end{cor}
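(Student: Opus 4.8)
The plan is to obtain both parts of the corollary from Theorem~\ref{multiple_forbidden_theorem_bipartite} by controlling the two products
\begin{equation*}
\prod_{j=1}^{m(Y)}\frac{1}{q(\sbb,\tb,\emptygraph,L_{j-1},\ebar_j)}\qquad\text{and}\qquad\prod_{j=1}^{m(Y)}p(\sbb,\tb,\emptygraph,L_{j-1},\ebar_j)
\end{equation*}
appearing in its statement. The core of the argument is a single uniform estimate: for all $k$ large enough that the relevant hypothesis holds, every factor $q(\sbb,\tb,\emptygraph,L_{j-1},\ebar_j)$ (Part~A) and every factor $p(\sbb,\tb,\emptygraph,L_{j-1},\ebar_j)$ (Part~B) lies in the interval $[1-\rho(k),1]$. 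Since $\rho$ is bounded away from $1$, this in particular makes each factor positive, so the hypotheses of Theorem~\ref{multiple_forbidden_theorem_bipartite} are met for large~$k$ and its conclusions apply.

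For Part~A I would argue as follows. Write $(\lb^{(j-1)},\mb^{(j-1)})$ for the degree sequence of $L_{j-1}=L_0+\ebar_1+\dots+\ebar_{j-1}$. Because $\ebar_1,\dots,\ebar_{j-1}\in Y$ and $L_0\cap Y=\emptygraph$, we have $\lb^{(j-1)}\le\lb+\yb$ and $\mb^{(j-1)}\le\mb+\zb$ componentwise, hence $\tb+\mb^{(j-1)}\le\tb+\mb+\zb$. Lemma~\ref{deg_sum_prop}(iii) then gives $D(\tb+\mb^{(j-1)},x)\le D(\tb+\mb+\zb,x)$ for every $x$, and dividing by $\max\{s_{p_j},1\}$ turns this into $\alpha_{p_j}(\tb,\mb^{(j-1)},\sbb,\zerob)\le\alpha_{p_j}(\tb,\mb+\zb,\sbb,\zerob)$. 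The uniform bound on the $\alpha$-value upgrades the right-hand side further to $\gamma(\tb,\mb+\zb,\sbb,\zerob)$, and running the same chain at $q_j$ bounds the $T$-term by $\gamma(\sbb,\lb+\yb,\tb,\zerob)$. Since $S$ and $T$ are non-decreasing (Lemma~\ref{deg_sum_prop}(ii)), substituting into the definition of $q$ and invoking the Part~A hypothesis yields $q(\sbb,\tb,\emptygraph,L_{j-1},\ebar_j)\ge 1-\rho(k)$.

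Part~B is similar but more direct: in the definition of $p$ one has, using $p_j,q_j\in\partial Y$ and the same componentwise domination, $s_{p_j}+l^{(j-1)}_{p_j}\le(\sbb+\lb+\yb)_{p_j}\le\varDelta_{\partial Y}(\sbb+\lb+\yb)$ and $t_{q_j}+m^{(j-1)}_{q_j}\le\varDelta_{\partial Y}(\tb+\mb+\zb)$, so monotonicity of $S$, $T$ and the Part~B hypothesis give $p(\sbb,\tb,\emptygraph,L_{j-1},\ebar_j)\ge 1-\rho(k)$. To conclude, I would use the elementary inequality $-\log(1-x)\le x/(1-x)$, valid on $[0,1)$, together with a fixed $c<1$ for which $\rho(k)\le c$ eventually:
\begin{equation*}
\prod_{j=1}^{m(Y)}\frac{1}{q(\sbb,\tb,\emptygraph,L_{j-1},\ebar_j)}\le(1-\rho(k))^{-m(Y)}\le\exp\!\left(\frac{m(Y)\rho(k)}{1-c}\right),
\end{equation*}
and symmetrically $\prod_{j}p(\sbb,\tb,\emptygraph,L_{j-1},\ebar_j)\ge(1-\rho(k))^{m(Y)}\ge\exp\!\left(-\frac{m(Y)\rho(k)}{1-c}\right)$; moreover $\prod_{j}p(\sbb,\tb,\emptygraph,L_{j-1},\ebar_j)\le1$ since each factor is at most~$1$. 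Splitting into the cases $m(Y)\rho(k)=o(1)$ and $m(Y)\rho(k)=O(1)$ turns the right-hand sides into $1+o(1)$/$O(1)$ and $1+o(1)$/$\Omega(1)$ respectively, and multiplying by $\Phi'(Y)$ and applying Theorem~\ref{multiple_forbidden_theorem_bipartite} gives the stated bounds. I expect the only genuinely delicate point to be the two-stage monotonicity reduction in Part~A — first replacing $L_{j-1}$ by $L_0\cup Y$ via Lemma~\ref{deg_sum_prop}(iii), then replacing the $\alpha_{\cdot}$ values by $\gamma$ — where one must keep careful track that the degree sequences and vertex sets are exactly those appearing in the definitions of $\alpha_w$ and $\gamma$; the remaining steps are routine.
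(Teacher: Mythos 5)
Your proposal is correct and follows essentially the same route as the paper's proof: reduce to Theorem \ref{multiple_forbidden_theorem_bipartite}, verify its positivity hypotheses via the monotonicity chain (componentwise domination of the degree sequence of $L_{j-1}$ by that of $L_0\cup Y$, then $\alpha\le\gamma$ resp.\ the $\varDelta_{\partial Y}$ bound, then monotone $S,T$), and control the two products by $(1-\rho(k))^{\pm m(Y)}=\exp\bigl(O(m(Y)\rho(k))\bigr)$. The only difference is cosmetic: the paper applies $1-x\ge e^{C(k)x}$ termwise and a Jensen/concavity averaging step (the quantities $\lambda_S,\lambda_T,\eta_S,\eta_T$) before relaxing to $\gamma$ and $\varDelta_{\partial Y}$, whereas you use a uniform per-factor bound together with $-\log(1-x)\le x/(1-x)$, which gives the same final estimate and suffices for the stated conclusion.
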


\section{Proof of Lemma \ref{single_edge_prob_lemma} and Theorems \ref{multiple_edge_thm}, \ref{multiple_forbidden_theorem}}\label{generic_case_proofs}
We will use the method of switchings to obtain bounds on the following ratio. For an edge $uv \in K_n$, write $H + uv$ to mean the graph consisting of the edges of $H$ and the edge $uv$. 
\begin{lemma}\label{ratio_bound_lemma}
    Let $H,L \subseteq K_n$ with $H \cap L = \emptygraph$. Let $uv \notin H \cup L$.
    \begin{enumerate}[label=\textbf{(\Alph*)}, leftmargin=*]
    \item   If $f(\db,H,L,uv) >0$, then
    \begin{equation*}
        \frac{|\Gs_{\db}(H+uv,L)|}{|\Gs_{\db}(H,L+uv)|} \leq \frac{(d_u-h_u)(d_v-h_v)}{2m(G) - 2m(H)} \cdot \frac{1}{f(\db,H,L,uv) }.
    \end{equation*}
    \item It always holds that
    \begin{equation*}
         \frac{|\Gs_{\db}(H+uv,L)|}{|\Gs_{\db}(H,L+uv)|}  \geq \frac{(d_u-h_u)(d_v-h_v)}{2m(G) - 2m(H)} \cdot g(\db, H, L, uv).   
    \end{equation*}
\end{enumerate}
The inequality of Part (B) always holds, but is only useful for probability estimates if we assume $g(\db,H,L,uv) > 0$, since this yields valid probabilities in $[0,1]$.
\end{lemma}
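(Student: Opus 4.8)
The plan is to prove Lemma~\ref{ratio_bound_lemma} by a direct \emph{2-switching} argument relating the sets $\mathcal{A} \coloneq \Gs_{\db}(H+uv,L)$ and $\mathcal{B} \coloneq \Gs_{\db}(H,L+uv)$. Given $G \in \mathcal{A}$, a forward switching picks an edge $uv$ (which is present and lies in $X\coloneq G\setminus H$-minus-the-rest) together with another edge $xy$ of $G$ disjoint from $\{u,v\}$ and not meeting $H$ or $L$ in a way that would block the operation, deletes $uv$ and $xy$, and adds $ux$ and $vy$ (choosing the pairing so that the result still avoids $L$, avoids creating a multi-edge, and still contains all of $H$). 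The resulting graph has the same degree sequence, contains $H$, does not contain $uv$, and avoids $L$, so it lies in $\mathcal{B}$. Conversely the reverse switching on $G'\in\mathcal{B}$ picks a suitable \emph{path} $x\,u\,?$ — more precisely a pair of edges $ux, vy$ with the right disjointness/forbiddenness constraints — deletes them and adds $uv$ and $xy$. I would count, for each $G\in\mathcal{A}$, the number $f_{\mathrm{out}}(G)$ of forward switchings, and for each $G'\in\mathcal{B}$ the number $f_{\mathrm{in}}(G')$ of reverse switchings, so that $|\mathcal{A}|\cdot(\text{min }f_{\mathrm{out}}) \le \sum_{G} f_{\mathrm{out}}(G) = \sum_{G'} f_{\mathrm{in}}(G') \le |\mathcal{B}|\cdot (\text{max }f_{\mathrm{in}})$ and the reverse, yielding two-sided bounds on $|\mathcal{A}|/|\mathcal{B}|$.

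For the \textbf{upper bound (Part A)}, I would bound $f_{\mathrm{out}}(G)$ from below: after removing $uv$, the number of ordered pairs $(x,y)$ with $xy\in G$ is essentially $2(m(G)-m(H))$ minus corrections, and the number of choices destroyed by the constraints (hitting $u$ or $v$, creating an edge already in $H$, or landing inside $L$) is controlled by degree sums. This is exactly where the function $f(\db,H,L,uv) = 1 - \frac{D_H(d_u+l_u)+D_H(d_v+l_v)}{2m(G)-2m(H)}$ appears: the bad pairs incident to $u$ (respectively $v$) number at most roughly $(d_u-h_u)+l_u$ choices for one endpoint times a degree-sum bound $D_H(\cdot)$ for the other, and the concavity/monotonicity of $D_H$ (Lemma~\ref{deg_sum_prop}) lets one package the whole correction as the subtracted term. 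Meanwhile $f_{\mathrm{in}}(G')\le (d_u-h_u)(d_v-h_v)$ since a reverse switching is determined by choosing a neighbour $x$ of $u$ and a neighbour $y$ of $v$ in $G'$ (with $x,y\notin\{u,v\}$ and $xy\notin H$), and $d_u-h_u$, $d_v-h_v$ count the available neighbours outside $H$. Dividing gives the claimed bound.

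For the \textbf{lower bound (Part B)}, I would run the inequalities the other way: bound $f_{\mathrm{out}}(G)$ from above by $(d_u-h_u)(d_v-h_v)$-type considerations is not quite what is wanted; rather, I bound $f_{\mathrm{in}}(G')$ from below and $f_{\mathrm{out}}(G)$ from above. A forward switching is determined by the ordered edge pair, so $f_{\mathrm{out}}(G)\le 2m(G)-2m(H)$ trivially (actually one wants a cleaner count tied to $(d_u-h_u)(d_v-h_v)$ via how $uv$ sits), while the reverse count $f_{\mathrm{in}}(G')$ is at least $(d_u-h_u)(d_v-h_v)$ minus the pairs $(x,y)$ that fail the constraints — and here the \emph{averaged} largest-degree parameter $\alpha_u,\alpha_v$ and hence $g(\db,H,L,uv)$ enters, because the obstruction "the edge $xy$ to be added already lies in $G'$ or $L$" is bounded not by a single maximum degree but by a sum over the $\max\{d_u-h_u,1\}$ (resp.\ $v$) relevant vertices, which is precisely $D_H(\alpha_u+2)$ after accounting for the two forbidden endpoints. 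The statement that Part~(B) holds \emph{unconditionally} is because the right-hand side is a genuine lower bound on a ratio of nonnegative integers even when $g\le 0$ (the bound is then vacuous/negative).

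The main obstacle I expect is the careful bookkeeping of the reverse-switching count for Part~(B): one must show that the number of "blocked" reverse switchings on $G'$ is at most $\big((d_u-h_u)+(d_v-h_v)\big)$ times an average-of-largest-degrees quantity rather than times $\varDelta$, and organize the double-counting so the $\alpha$-values (with the $+2$ shift absorbing the excluded vertices $u,v$) emerge cleanly; getting the constant shifts right and invoking concavity of $D_H$ at the correct arguments is the delicate part. The forward count and the trivial neighbour-counting bounds are routine by comparison, as is the final algebra passing from the ratio bound to the probability statement via $\Pa(uv\in G) = \big(1 + |\mathcal{B}|/|\mathcal{A}|\big)^{-1}$.
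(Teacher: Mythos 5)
Your Part~A is essentially the paper's argument: a forward 2-switching count bounded below by $(2m(G)-2m(H))\,f(\db,H,L,uv)$ via inclusion--exclusion, a backward count bounded above by $(d_u-h_u)(d_v-h_v)$, and double counting. That half is fine.

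Part~B, however, has a genuine gap: the 2-switching cannot produce the claimed lower bound. With your scheme you would use $f_{\mathrm{out}}(G)\leq 2m(G)-2m(H)$ and a uniform lower bound on the reverse count $f_{\mathrm{in}}(G')$, so the ratio bound you obtain has the form $\frac{(d_u-h_u)(d_v-h_v)-B}{2m(G)-2m(H)}$, where $B$ is your bound on the number of blocked reverse pairs $(x,y)$ with $ux,vy$ edges but $x=y$ or $xy\in G'\cup H\cup L$. The lemma, by contrast, asserts a deficit of only $\frac{(d_u-h_u)(d_v-h_v)}{2m(G)-2m(H)}\bigl(D_H(\alpha_u+2)+D_H(\alpha_v+2)\bigr)$, i.e.\ the error in $g$ is normalised by $2m(G)-2m(H)$, not by $(d_u-h_u)(d_v-h_v)$. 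A 2-switching cannot achieve this: the blocked reverse pairs are inherently measured against $(d_u-h_u)(d_v-h_v)$, and their number need not be small in that scale. Concretely, if $d_u-h_u=d_v-h_v=1$ and the unique available neighbours of $u$ and $v$ happen to be adjacent in some $G'\in\Gs_{\db}(H,L+uv)$, then that $G'$ admits no reverse 2-switching at all, so the uniform minimum of $f_{\mathrm{in}}$ is $0$ and your argument yields only the trivial bound, even though $g(\db,H,L,uv)$ can be close to $1$ (e.g.\ when $D_H(\varDelta(\db+\lb)+2)=o(m(G)-m(H))$). This is exactly why the paper abandons the 2-switching for the lower bound and uses a 3-switching: deleting $uv,xa,yb$ and inserting $ux,vy,ab$ makes the backward count roughly $(2m(G)-2m(H))(d_u-h_u)(d_v-h_v)$, while the ``terrible'' and ``bad'' reverse configurations number at most $(d_u-h_u)(d_v-h_v)\bigl(D_H(\alpha_u+2)+D_H(\alpha_v+2)\bigr)$ (this is where Lemma~\ref{deg_sum_prop} and Jensen's inequality produce the averaged parameters $\alpha_u,\alpha_v$), and the forward count is at most $(2m(G)-2m(H))^2$; dividing gives precisely $\frac{(d_u-h_u)(d_v-h_v)}{2m(G)-2m(H)}\,g(\db,H,L,uv)$. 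To repair your proposal you would need either to adopt this 3-switching, or to replace the uniform min/max counting by a genuine averaging over $G'$, which you do not provide and which would amount to a different argument.
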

The proof of Lemma \ref{ratio_bound_lemma} is left to the end of this section. Firstly, we use it to prove Theorems \ref{multiple_edge_thm} and \ref{multiple_forbidden_theorem}.

\begin{proof}[Proof: Lemma \ref{single_edge_prob_lemma} and Theorem \ref{multiple_edge_thm}]
We have that
\begin{align*}
    \Pa(X \subseteq G) & = \prod_{i=1}^{m(X)} \frac{|\Gs_{\db}(X_{i},L)|}{|\Gs_{\db}(X_{i-1},L)|} \\
    & = \prod_{i=1}^{m(X)} \left(1+  \frac{|\Gs_{\db}(X_{i-1},L+e_i)|}{|\Gs_{\db}(X_{i-1}+e_i,L)|} \right)^{-1}, \numberthis \label{X_prob} 
\end{align*}
where the last equality follows as $\Gs_{\db}(X_{i-1},L)$ can be partitioned into $\Gs_{\db}(X_{i-1}+e_i,L) \cup \Gs_{\db}(X_{i-1},L+e_i)$. To prove Lemma \ref{single_edge_prob_lemma}, we use the same partitioning argument to obtain $\Pa(uv\subseteq G) = \left(1 + \frac{|\Gs_{\db}(H,L+uv)|}{|\Gs_{\db}(H+uv,L)|}\right)^{-1}$, and then apply Lemma \ref{ratio_bound_lemma}. 

Now we proceed to prove Theorem \ref{multiple_edge_thm}. Let $u_i$ and $v_i$ be vertices such that $e_i = u_iv_i$. Recall that $x_{i-1,u_i}$ is the degree of vertex $u_i$ in the graph $X_{i-1}$.

\textbf{Part A: Upper Bound.}
The bound in Lemma \ref{ratio_bound_lemma}(A) applied to \eqref{X_prob} yields
\begin{align*}
    \Pa(X \subseteq G) & \leq \prod_{i=1}^{m(X)} \left(1 + \frac{2m(G) - 2m(X_{i-1})}{(d_{u_i}-x_{i-1,u_i})(d_{v_i}-x_{i-1,v_i})} \cdot f(\db, X_{i-1}, L, e_i)\right)^{-1} .
\end{align*}
For $a_1,a_2,\dots,a_k \geq 0$, we have $\prod_{i=1}^k \frac{1}{1+a_i} \leq \frac{1}{1+\prod_{i=1}^k a_i}$. Further, observe that $\prod_{i=1}^{m(X)} (2m(G) - 2m(X_{i-1})) = 2^{m(X)} [m(G)]_{m(X)}$ and $\prod_{i=1}^{m(X)} (d_{u_i} - x_{i-1, u_i})(d_{v_i} - x_{i-1, v_i}) = \prod_{j \in W} [d_j]_{x_{j}}$.  This information gives
\begin{align*}
    \Pa(X \subseteq G) & \leq  \left( 1 + \frac{2^{m(X)} [m(G)]_{m(X)}}{\prod_{j \in W} [d_j]_{x_{j}}} \cdot \prod_{i=1}^{m(X)}  f(\db, X_{i-1}, L, e_i) \right)^{-1}. 
\end{align*}
The inequality $\frac{1}{1+\frac{1}{x}} \leq x$ for $x > 0$ concludes the proof.

\textbf{Part B: Lower Bound.}
The bound in Lemma \ref{ratio_bound_lemma}(B) applied to Equation \eqref{X_prob} yields
\begin{align*}
    \Pa(X \subseteq G) & \geq \prod_{i=1}^{m(X)} 
 \left(1 + \frac{2m(G) - 2m(X_{i-1})}{(d_{u_i}-x_{i-1,u_i})(d_{v_i}-x_{i-1,v_i}) \cdot g(\db, X_{i-1}, L, e_i)} \right)^{-1} \\
    & \geq \prod_{i=1}^{m(X)} \frac{(d_{u_i}-x_{i-1,u_i})(d_{v_i}-x_{i-1,v_i}) \cdot g(\db, X_{i-1}, L, e_i)}{(d_{u_i}-x_{i-1,u_i})(d_{v_i}-x_{i-1,v_i}) + 2m(G) - 2m(X_{i-1})} \numberthis \label{prob_lower_bound}.
\end{align*}
For the last inequality, we use $g(\db,X_{i-1},L,e_i) \leq 1$. The numerator is given by $\prod_{j \in W} [d_j]_{x_{j}} \cdot \prod_{i=1}^{m(X)} g(\db, X_{i-1}, L, e_i)$. Consider the term in the denominator. By relaxing $d_w - x_{i-1,w} \leq 
 d_w$ and $m(X_{i-1}) \leq m(X)$, we obtain
{\allowdisplaybreaks
\begin{align*}
    & \prod_{i=1}^{m(X)} \Bigl((d_{u_i}-x_{i-1,u_i})(d_{v_i}-x_{i-1,v_i}) + 2m(G) - 2m(X_{i-1})\Bigr) \\
    & \leq 2^{m(X)}[m(G)]_{m(X)}  \prod_{i=1}^{m(X)} \left(\frac{d_{u_i} d_{v_i}}{2m(G) - 2m(X)} + 1 \right) \\
    & \leq 2^{m(X)}[m(G)]_{m(X)} \cdot \phi(\db,X)^{m(X)}.  % \numberthis \label{bound_lower}
\end{align*}}
The last inequality follows by the AM-GM inequality. This information in Equation \eqref{prob_lower_bound} gives the result.
\end{proof}

\begin{proof}[Proof of Theorem \ref{multiple_forbidden_theorem}]
    We have that
\begin{align*}
    \Pa(Y \cap G = \emptygraph ) & = \prod_{j=1}^{m(Y)} \frac{|\Gs_{\db}(\emptygraph,L_j)|}{|\Gs_{\db}(\emptygraph,L_{j-1})|}  \\
    & = \prod_{j=1}^{m(Y)} \left(1 + \frac{|\Gs_{\db}(\ebar_j,L_{j-1})|}{|\Gs_{\db}(\emptygraph,L_{j-1}+\ebar_j)|}\right)^{-1} \numberthis \label{Y_prob}, 
\end{align*}
where the last equality follows as $\Gs_{\db}(\emptygraph,L_{j-1})$ can be partitioned into $\Gs_{\db}(\emptygraph,L_{j-1}+\ebar_j) \cup \Gs_{\db}(\ebar_j,L_{j-1})$. Both Part A and Part B immediately follow from Lemma \ref{ratio_bound_lemma} applied to \eqref{Y_prob} along with the observation $g(\db,,\emptygraph,L_{j-1},\ebar_j) \leq 1$ and $f(\db,\emptygraph,L_{j-1},\ebar_j) \leq 1$.
\end{proof}

\subsection{Proof of Lemma \ref{ratio_bound_lemma}}
\begin{proof}[Proof of Lemma \ref{ratio_bound_lemma}]
We will use the method of switchings to estimate the ratio $\frac{|\Gs_{\db}(uv,M)|}{|\Gs_{\db}(\emptygraph,M+uv)|} $, where $M \subseteq K_n$ and $uv \notin M$. This suffices as one can make the substitutions $\db \mapsto \db - \hb$ and $M \mapsto H \cup L$ to obtain the estimate on $\frac{|\Gs_{\db-\hb}(uv,H \cup L)|}{|\Gs_{\db-\hb}(\emptygraph,H \cup L + uv)|}  = \frac{|\Gs_{\db}(uv+H,L)|}{|\Gs_{\db}(H,L + uv)|} $ presented in Lemma~\ref{ratio_bound_lemma}.

Our calculations deviate from previous results since we use a different switching for the upper and lower bounds.
Indeed, for the upper bound in Lemma \ref{ratio_bound_lemma}, we adopt a 2-switching that was used 
for both upper and lower bounds in \cite{mckay81}, while  for the lower bound we will use a 3-switching introduced in \cite{enumeration91} and
also used by \cite{gao22}. In the diagrams, solid black lines are edges in $G$ and dashed black lines are non-edges. Red edges are known to be in $G \cup M$.

\subsection*{Part A Proof: 2-Switchings}
\subsubsection*{Forward 2-Switchings}
\begin{defn}[Forward 2-Switching]
    Given $G \in \Gs_{\db}(uv,M)$, we call an ordered $2$-tuple of vertices $(x,y) \in W^2$ a \textit{forward 2-switching} on $G$ if the following conditions hold:
    \begin{enumerate}
        \item $xy$ is an edge in $G$ and $xy \neq uv$.
        \item $\{x,y\} \cap \{u,v\} = \emptyset$ and $ux,vy$ are not edges in $G$ or $M$.
    \end{enumerate}
\end{defn}
\begin{figure}[htbp]
\centering
   \begin{tikzpicture}[
every edge/.style = {draw=black,very thick},
 vrtx/.style args = {#1/#2}{% 
      circle, draw, thick, fill=white,
      minimum size=4mm, label=#1:#2}
                    ]
\node(u) [vrtx= left/$u$] at (0-5, 2) {};
\node(v) [vrtx= left/$v$] at (0-5, 0) {};
\node(y) [vrtx= right/$y$] at (2-5, 0) {};
\node(x) [vrtx= right/$x$] at (2-5, 2) {};

\path   (u) edge (v)
        (x) edge (y)
        (u) edge[dashed]   (x)
        (v) edge[dashed]   (y);

\node(u') [vrtx= left/$u$] at (0+3, 2) {};
\node(v') [vrtx= left/$v$] at (0+3, 0) {};
\node(y') [vrtx= right/$y$] at (2+3, 0) {};
\node(x') [vrtx= right/$x$] at (2+3, 2) {};

\path   (u') edge[dashed] (v')
        (x') edge[dashed] (y')
        (u') edge  (x')
        (v') edge  (y');

\node(forward) at (-0,0.3) (<name>) {Forward 2 - Switching};
\draw[->, above, thick] (-1.5,0) -- (1.5,0);

\node(back) at (0,0.3+2) (<name>) {Backward 2 - Switching};
\draw[<-, above, thick] (-1.5,0+2) -- (1.5,0+2);

\end{tikzpicture} 
\caption{A 2-switching between $\Gs_{\db}(uv,M)$ and $\Gs_{\db}(\emptygraph,M+uv)$.}
\label{2_switch}
\end{figure} 

\def\remove#1{}
With reference to Figure \ref{2_switch}, given a forward 2-switching $(x,y)$, we can delete the edges $uv,xy$ and add the edges $ux$ and $vy$ to obtain a new graph $G' \in \Gs_{\db}(\emptygraph, M+uv)$. For this reason, we say $(x,y)$ is a forward 2-switching \textit{from} $G$ to $G'$.

Condition 1 ensures $x \neq y$ and $xy \in G$ is not the edge $uv$. Condition 2 (with Condition 1) ensures $u,v,x,y$ are distinct. Further it imposes that $ux$ and $vy$ are not edges in $G$ and are not forbidden by $M$, since we add these edges during the switching. After performing a switching the degree sequence has not changed. Let $f_2(G)$ denote the number of forward 2-switchings from $G$. We have the following uniform lower bound on $f_2(G)$.
\begin{claim}\label{forward_2_switch_lower}
For all $G \in \Gs_{\db}(uv,M)$,
    \begin{equation*}
        f_2(G) \geq 2m(G) \cdot f(\db,\emptygraph,M,uv). 
    \end{equation*}
\end{claim}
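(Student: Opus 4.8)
The plan is to bound $f_2(G)$ from below by counting forward 2-switchings $(x,y)$ directly. Fix $G \in \Gs_{\db}(uv,M)$. A forward 2-switching is an ordered pair $(x,y)$ with $xy \in G$, $xy \neq uv$, $\{x,y\} \cap \{u,v\} = \emptyset$, and neither $ux$ nor $vy$ lying in $G \cup M$. The natural approach is to start from the set of all ordered pairs $(x,y)$ with $xy$ an edge of $G$ distinct from $uv$ — there are at least $2m(G) - 2$ of these (each edge of $G$ other than $uv$ contributes two ordered pairs; writing it as $2m(G)$ and absorbing the $-2$ and the handling of $uv$ into the error term is cleaner) — and then subtract off, via a union bound, the ordered pairs that violate one of the remaining conditions.

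The key steps, in order: First, count ordered edge-pairs $(x,y)$, $xy \in G$, as the starting pool. Second, for each forbidden reason, bound the number of bad pairs. A pair is bad if $x \in \{u,v\}$ or $y \in \{u,v\}$ (at most $O(d_u + d_v)$ such pairs, since $x$ or $y$ is then pinned to a neighbour-degree count), if $ux \in G \cup M$ (the number of such $x$ is at most $d_u + l_u$, and each such $x$ has at most $d_x$ choices of neighbour $y$, giving $\sum_{x : ux \in G \cup M} d_x \le D_{\emptyset}(d_u + l_u)$ — here is where the degree sum function $D$ enters, bounding the sum of the $d_u + l_u$ largest degrees), and symmetrically if $vy \in G \cup M$ (at most $D(d_v + l_v)$ pairs). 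Third, assemble: $f_2(G) \ge 2m(G) - D(d_u + l_u) - D(d_v + l_v) - (\text{lower-order terms})$, and recognize the right-hand side as $2m(G)\bigl(1 - \frac{D(d_u+l_u)+D(d_v+l_v)}{2m(G)}\bigr)$ up to the lower-order slack, which is exactly $2m(G) \cdot f(\db,\emptygraph,M,uv)$ since here $H = \emptygraph$ so $m(H) = 0$, $h_u = h_v = 0$, and $D_H = D$. One must check that the lower-order terms ($O(d_u + d_v)$ and any double-counting corrections) are genuinely absorbed; since $D(d_u + l_u) \ge d_u$ and $D(d_v + l_v) \ge d_v$ pointwise (the degree sum function of a single term is at least that term when the count is $\ge 1$), and the definition of $f$ presumably has enough room, this should go through — though I would want to track constants carefully, or note that the claim as stated may implicitly use the convention that $uv$'s removal and the $\{x,y\}\cap\{u,v\}$ exclusions are what the "$1$" slack in nearby definitions accounts for.

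The main obstacle I anticipate is the bookkeeping of which pairs get double-subtracted and making sure the $-2$ from excluding the edge $uv$ itself, together with the $O(d_u+d_v)$ pairs meeting $\{u,v\}$, are dominated so that the bound comes out cleanly as $2m(G)\cdot f$ rather than $2m(G)\cdot f$ minus extra junk. The clean way is to observe that removing the edge $uv$ from the pool and removing pairs incident to $u$ or $v$ overlaps heavily with removing pairs where $ux$ or $vy \in G$ (since $uv \in G$), so a careful union bound that counts "$x$ such that $ux \in G\cup M$ \emph{or} $x \in \{u,v\}$" in one shot — bounding it by $d_u + l_u + 1$ neighbours-or-self, hence $\sum d_x \le D(d_u + l_u + 1)$ — may be what is actually needed; if the stated bound uses $D(d_u+l_u)$ without the $+1$, then the argument must exploit that $uv \in G$ contributes to the count $d_u$ already, so $u$'s own exclusion is not extra. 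I would resolve this by writing the union bound to absorb the self-vertex cases into the edge-incidence cases using $uv \in G$, verify no term is left over, and only then collapse to the $f$-expression.
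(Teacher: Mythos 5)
Your plan is the same as the paper's: enumerate the ordered pairs $(x,y)$ with $xy\in G$, $xy\neq uv$, and subtract a union bound on the pairs violating Condition 2, bounding the edge-incidence cases through the degree sum function. However, the obstacle you flag at the end is the actual content of the claim, and your first assembled bound does not meet it: the claim has no error term at all, since $2m(G)\cdot f(\db,\emptygraph,M,uv)=2m(G)-D(d_u+m_u)-D(d_v+m_v)$ exactly (your $l_u$ is the paper's $m_u$, the degree of $u$ in $M$). So neither the $-2$ from discarding the edge $uv$ nor the $O(d_u+d_v)$ pairs meeting $\{u,v\}$ can be ``absorbed into slack''; a bound of the form $2m(G)-2-D(d_u+m_u)-D(d_v+m_v)-O(d_u+d_v)$ is strictly weaker than what is claimed.

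The paper closes the gap exactly along the lines of your second, sketched fix, and the details do work out. It starts from the exact count $2m(G)-2$ and splits the bad pairs into a $u$-side group ($x=u$, $ux\in G\cup M$, or $y=u$) and a symmetric $v$-side group. For the $u$-side: the case $x=u$ contributes only $d_u-1$ pairs (since $y\neq v$); the case $y=u$ is already contained in the $d_x$ choices when $ux\in G$; and the sum $\sum d_x$ runs over the at most $d_u+m_u-1$ vertices $x\neq v$ with $ux\in G\cup M$ (excluding $x=v$ is legitimate because $uv\in G$ puts $v$ among those neighbours and the $x=v$ pairs are charged to the $v$-side group), none of which equal $u$. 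Hence the group is at most $(d_u-1)+D^{\{u\}}(d_u+m_u-1)\leq D(d_u+m_u)-1$ by Lemma~\ref{deg_sum_prop}(i) --- precisely the ``absorb the self-vertex case using $uv\in G$'' step you anticipated. Likewise the $v$-side is at most $D(d_v+m_v)-1$, and the two $-1$'s cancel the $-2$ in the starting count, yielding the claim with no leftover terms. So your approach is the right one, but as written it only proves the weaker bound; the missing step is this exact cancellation, which you conjectured but did not verify.
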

\begin{figure}[ht]
\centering

% === Row 1 ===
\begin{subfigure}[b]{0.3\textwidth}
\centering
\begin{tikzpicture}[
  every edge/.style = {draw=black,very thick},
  vrtx/.style args = {#1/#2}{circle, draw, thick, fill=white, minimum size=4mm, label=#1:#2}
]
\node(u) [vrtx=above/\phantom{h}, label = above : {$u=x$}] at (0, 2) {};
\node(v) [vrtx=left/$v$] at (0, 0) {};
\node(x) [vrtx=right/$y$] at (2, 2) {};
\path (u) edge (v);
\path (u) edge (x);
\end{tikzpicture}
\caption*{(A): $u = x$}
\end{subfigure}
\hfill
\begin{subfigure}[b]{0.3\textwidth}
\centering
\begin{tikzpicture}[
  every edge/.style = {draw=black,very thick},
  vrtx/.style args = {#1/#2}{circle, draw, thick, fill=white, minimum size=4mm, label=#1:#2}
]
\node(u) [vrtx=left/$u$] at (0, 2){};
\node(v) [vrtx=left/{$v$}] at (0, 0){};
\node(y) [vrtx=right/{$y$}] at (2, 0){};
\node(x) [vrtx=right/{$x$}] at (2, 2){};
\path (u) edge (v);
\path (x) edge (y);
\path (u) edge[red] (x);
\end{tikzpicture}
\caption*{(B): $ux \in G \cup M$}
\end{subfigure}
\hfill
\begin{subfigure}[b]{0.3\textwidth}
\centering
\begin{tikzpicture}[
  every edge/.style = {draw=black,very thick},
  vrtx/.style args = {#1/#2}{circle, draw, thick, fill=white, minimum size=4mm, label=#1:#2}
]
\node(u) [vrtx=above/\phantom{h}, label = above : {$u=y$}] at (0, 2) {};
\node(v) [vrtx=left/$v$] at (0, 0) {};
\node(x) [vrtx=right/$x$] at (2, 2) {};
\path (u) edge (v);
\path (u) edge (x);
\end{tikzpicture}
\caption*{(C): $u = y$}
\end{subfigure}

\vspace{1em}

% === Row 2 ===
\begin{subfigure}[b]{0.3\textwidth}
\centering
\begin{tikzpicture}[
  every edge/.style = {draw=black,very thick},
  vrtx/.style args = {#1/#2}{circle, draw, thick, fill=white, minimum size=4mm, label=#1:#2}
]
\node(u) [vrtx=left/$u$] at (0, 2) {};
\node(v) [vrtx=left/\phantom{h}, label = below : {$v=y$}] at (0, 0) {};
\node(y) [vrtx=right/$x$] at (2, 0) {};
\path (u) edge (v);
\path (v) edge (y);
\end{tikzpicture}
\caption*{(D): $v = y$}
\end{subfigure}
\hfill
\begin{subfigure}[b]{0.3\textwidth}
\centering
\begin{tikzpicture}[
  every edge/.style = {draw=black,very thick},
  vrtx/.style args = {#1/#2}{circle, draw, thick, fill=white, minimum size=4mm, label=#1:#2}
]
\node(u) [vrtx=left/$u$] at (0, 2) {};
\node(v) [vrtx=left/$v$] at (0, 0) {};
\node(y) [vrtx=right/$y$] at (2, 0) {};
\node(x) [vrtx=right/$x$] at (2, 2) {};
\node(fill) [vrtx=below/\phantom{h}, label = below : {\phantom{$v=y$}}] at (0, 0) {};
\path (u) edge (v);
\path (x) edge (y);
\path (v) edge[red] (y);
\end{tikzpicture}
\caption*{(E): $vy \in G \cup M$}
\end{subfigure}
\hfill
\begin{subfigure}[b]{0.3\textwidth}
\centering
\begin{tikzpicture}[
  every edge/.style = {draw=black,very thick},
  vrtx/.style args = {#1/#2}{circle, draw, thick, fill=white, minimum size=4mm, label=#1:#2}
]
\node(u) [vrtx=left/$u$] at (0, 2) {};
\node(v) [vrtx=left/\phantom{h}, label = below : {$v=x$}] at (0, 0) {};
\node(y) [vrtx=right/$y$] at (2, 0) {};
\path (u) edge (v);
\path (v) edge (y);
\end{tikzpicture}
\caption*{(F): $v = x$}
\end{subfigure}

\caption{Bad cases where Condition 2 is not satisfied}
\label{bad_con_2_2_switch}
\end{figure}

\begin{proof}[Proof of Claim \ref{forward_2_switch_lower}]
    We use inclusion-exclusion. There are exactly $2m(G) - 2$ ordered pairs $(x,y)$ such that Condition 1 is satisfied. A $2$-tuple is called \textit{bad} if it satisfies Condition 1 but \textbf{not} Condition 2. Starting with the exact count on $2$-tuples satisfying Condition 1, we remove an upper bound on the number of bad $(x,y)$ to obtain a lower bound for $f_2(G)$.

\textbf{Bad Cases: Upper Bound. }
The bad cases satisfying Condition 1 but not Condition~2 are shown in Figure \ref{bad_con_2_2_switch}. Cases (A), (B) and (C) correspond to $u = x$, $ux \in G \cup M$ and $u = y$. Cases (D), (E) and (F) correspond to $v = y$, $vy \in G \cup M$ and $v = x$. The number of bad $(x,y)$ satisfying (A) is $d_u - 1$. Since we set $u = x$ and have $d_u - 1$ choices for $y$, where $y \neq v$. The number of bad $(x,y)$ satisfying (B) and (C) is
\begin{equation*}
    \sum_{\substack{x \in W \\ ux \in G \cup M \\ x \neq v}}d_x.
\end{equation*}
This arises since for each choice of $x$ such that $ux \in G \cup M$ and $x \neq v$, there are $d_x$ choices for $y$ that result in case (B) or (C). Case (C) only occurs for $x$ such that $ux \in G$ and is counted in the $d_x$ term. Therefore, the number of bad (A), (B) and (C) cases is 
\begin{equation*}
    (d_u - 1) + \sum_{\substack{x \in W \\ ux \in G \cup M \\ x \neq v}} d_x \leq (d_u - 1) + D^{\{u\}}(d_u + m_u - 1).
\end{equation*}
This follows since there are $d_u + m_u - 1$ terms in the summation (the $-1$ as $x \neq v$) and $x \neq u$ since $ux \in G \cup M$. Using $d_u  + D^{\{u\}}(d_u + m_u - 1) \leq D(d_u + m_u)$ gives the number of bad $(x,y)$ in cases (A), (B) and (C) being at most $D(d_u + m_u) - 1$. A similar count shows that the number of bad $(x,y)$ in cases (D), (E) and (F) is at most $D(d_v + m_v) - 1$. Subtracting from the upper bound then gives
\begin{align*}
   f_2(G) &  \geq 2m(G) - 2 - (D(d_u + m_u) + D(d_v + m_v) - 2) \\
    & = 2m(G) \cdot f(\db,\emptygraph,M,uv).\qedhere
\end{align*}
\end{proof}

\subsubsection*{Backward 2-Switchings}
We now define a backward 2-switching.
\begin{defn}[Backward 2-Switching]
    Please refer to Figure \ref{2_switch}. Given $G' \in \Gs_{\db}(\emptygraph,M+uv)$, we call an ordered $2$-tuple $(x,y) \in W^2$ a \textit{backward 2-switching} on $G$ if the following conditions hold:
    \begin{enumerate}
        \setcounter{enumi}{2}
        \item $ux,vy$ are edges in $G'$.
        \item $x \neq y$ and $xy$ is not an edge in $G'$ or $M$.
    \end{enumerate}
\end{defn}

Again, given such a $2$-tuple, we can modify $G'$ by deleting the edges $ux$ and $vy$ and adding the edges $uv$ and $xy$ to obtain $G \in \Gs_{\db}(uv,M)$. Condition 4 ensures $x$ and $y$ are distinct and $xy$ is not an edge in $G'$ or $M$, since we add it to the graph during the switching. Let $b_2(G')$ denote the number of backward 2-switchings from $G'$. We have the following uniform bound on $b_2(G')$.
\begin{claim}\label{backward_2_switch_upper}
    For all $G' \in \Gs_{\db}(\emptygraph,M+uv)$,
    \begin{equation*}
        b_2(G') \leq d_u d_v.
    \end{equation*}
\end{claim}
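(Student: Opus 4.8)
The plan is to bound the number of backward 2-switchings from a fixed graph $G' \in \Gs_{\db}(\emptygraph, M+uv)$ by simply counting the number of ways to choose the ordered pair $(x,y)$ subject to Condition 3. First I would recall that a backward 2-switching $(x,y)$ requires $ux$ to be an edge of $G'$ and $vy$ to be an edge of $G'$. Since the vertex $u$ has degree $d_u$ in $G'$ (the degree sequence is $\db$, and $u$ is not incident to any edge we have removed in this setup because $uv \notin G'$), there are at most $d_u$ choices for $x$ with $ux \in G'$. Likewise there are at most $d_v$ choices for $y$ with $vy \in G'$. Multiplying these two independent counts gives $b_2(G') \leq d_u d_v$, since every backward 2-switching must in particular satisfy Condition 3, and we are free to discard the additional restrictions imposed by Condition 4 (which can only decrease the count).

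The key steps, in order, are: (i) observe that by Condition 3 the first coordinate $x$ ranges over the neighbourhood of $u$ in $G'$, of which there are exactly $d_u$; (ii) observe that by Condition 3 the second coordinate $y$ ranges over the neighbourhood of $v$ in $G'$, of which there are exactly $d_v$; (iii) conclude that the number of pairs $(x,y)$ satisfying Condition 3 is exactly $d_u d_v$, hence the number also satisfying Condition 4 is at most $d_u d_v$. There is essentially no obstacle here — this is a one-line counting argument — the only point worth a sentence of care is confirming that the degree of $u$ (resp.\ $v$) in $G'$ really is $d_u$ (resp.\ $d_v$): this holds because $G' \in \Gs_{\db}(\emptygraph, M+uv)$ has degree sequence $\db$ by definition, so no adjustment is needed. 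Note we do not even need $x \neq y$ or the non-edge condition on $xy$ for the upper bound; those only matter for the matching lower bound on forward switchings and for the reversibility of the operation.

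The main (and only mild) subtlety is that, unlike the forward direction in Claim \ref{forward_2_switch_lower}, here we are content with a clean upper bound and therefore do not invoke inclusion-exclusion or the degree sum function at all; the crude product bound $d_u d_v$ is exactly what is needed to feed into the ratio estimate of Lemma \ref{ratio_bound_lemma}, together with the lower bound $f_2(G) \geq 2m(G)\cdot f(\db,\emptygraph,M,uv)$ from Claim \ref{forward_2_switch_lower}, via the standard double-counting identity $\sum_{G} f_2(G) = \sum_{G'} b_2(G')$ over the switching bipartite graph. So the proof of the claim is immediate, and I anticipate no real difficulty.
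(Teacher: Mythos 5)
Your proposal is correct and matches the paper's proof exactly: count the ordered pairs $(x,y)$ satisfying Condition 3 alone, giving $d_u$ choices for $x$ and $d_v$ choices for $y$, hence $b_2(G') \leq d_u d_v$. The paper's argument is the same one-line product count, discarding Condition 4.
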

\begin{proof}
    We count the number of $(x,y)$ satisfying Condition 3. There are $d_u$ choices for $x$ and for each of these there are $d_v$ choices for $y$.
\end{proof}

Let $T$ denote the number of pairs $(G,G')$, where $G \in \Gs_{\db}(uv,M)$ and $G' \in \Gs_{\db}(\emptygraph,M+uv)$ and there exists a forward switching from $G$ to $G'$. Observe that this implies there is a backward switching from $G'$ to $G$, so we obtain the equality
\begin{equation*}
 \sum_{G \in \Gs_{\db}(uv,M)} f_2(G) = T = \sum_{G' \in \Gs_{\db}(\emptygraph,M+uv)} b_2(G'). 
\end{equation*}
Claim \ref{forward_2_switch_lower} and Claim \ref{backward_2_switch_upper} then give
\begin{equation*}
    |\Gs_{\db}(uv,M)| \cdot 2m(G) \cdot f(\db,\emptygraph,M,uv)  \leq |\Gs_{\db}(\emptygraph,M+uv)| \cdot d_u d_v . 
\end{equation*}
The assumption $f(\db,\emptygraph,M,uv) > 0$ allows one to rearrange the above to an upper bound on $\frac{|\Gs_{\db}(uv,M)| }{|\Gs_{\db}(\emptygraph,M+uv)| }$. Observe that the substitution $\db \mapsto \db - \hb$ and $M \mapsto H \cup L$ gives the result as shown in Lemma \ref{ratio_bound_lemma}(A). Indeed the substitution yields, $2m(G) \mapsto 2m(G) - 2m(H)$ and $f(\db,\emptygraph,M,uv) \mapsto f(\db,H,L,uv) $.

\subsection*{Part B Proof: 3-Switchings}\label{3_switch_section}
\begin{figure}[htbp]
\centering
   \begin{tikzpicture}[
every edge/.style = {draw=black,very thick},
 vrtx/.style args = {#1/#2}{% 
      circle, draw, thick, fill=white,
      minimum size=4mm, label=#1:#2}
                    ]
\node(u) [vrtx= left/$u$] at (-8*0.75, 2*0.75) {};
\node(v) [vrtx= left/$v$] at (-8*0.75, 0) {};
\node(w) [vrtx= above/$y$] at (-6*0.75, -1.2*0.75) {};
\node(c) [vrtx= above/$x$] at (-6*0.75, 3.2*0.75) {};
\node(a) [vrtx= right/$a$] at (-4*0.75,2*0.75) {};
\node(b) [vrtx= right/$b$] at (-4*0.75,0) {};

\path   (u) edge (v)
        (u) edge[dashed]   (c)
        (c) edge (a)
        (b) edge (w)
        (v) edge[dashed]   (w)
        (a) edge[dashed]   (b);

\node(u') [vrtx= left/$u$] at (2*0.75, 2*0.75) {};
\node(v') [vrtx= left/$v$] at (2*0.75, 0) {};
\node(w') [vrtx= above/$y$] at (4*0.75, -1.2*0.75) {};
\node(c') [vrtx= above/$x$] at (4*0.75, 3.2*0.75) {};
\node(a') [vrtx= right/$a$] at (6*0.75,2*0.75) {};
\node(b') [vrtx= right/$b$] at (6*0.75,0) {};

\path   (u') edge[dashed]  (v')
        (u') edge  (c')
        (c') edge[dashed]  (a')
        (b') edge[dashed]  (w')
        (v') edge  (w')
        (a') edge  (b');

\node(forward) at (-1*0.75,-1+0.3) (<name>) {Forward 3-Switching};
\draw[->, above, thick] (-2.5+0.25,-1) -- (0.5+0.25,-1);

\node(back) at (-1*0.75,2.3) (<name>) {Backward 3-Switching};
\draw[<-, above, thick] (-2.5+0.25,2) -- (0.5+0.25,2);
\end{tikzpicture} 
\caption{A 3-switching between $\Gs_{\db}(uv,M)$ and $\Gs_{\db}(\emptygraph,M+uv)$}
\label{3_switch}
\end{figure} 
\subsubsection*{Forward 3-Switchings}
\begin{defn}[Forward 3-Switching]
    Given $G \in \Gs_{\db}(uv,M)$, we call an ordered $4$-tuple $(x,a,y,b) \in W^4$ a \textit{forward 3-switching} on $G$ if the following conditions hold:
\begin{enumerate}
    \item $uv,xa,yb$ are edges in $G$ (not necessarily distinct).
    \item The vertices $u,v,x,y,a,b$ are distinct except $x = y$ is permitted.
    \item $ux,vy,ab$ are not edges in $G$ or $M$.
\end{enumerate}
\end{defn}

Given such an ordered $4$-tuple, one can modify the graph $G$ into a new graph $G' \in \Gs_{\db}(\emptygraph, M + uv)$ by deleting the edges $uv,xa,yb$ and adding the edges $ux,vy,ab$ as shown in Figure \ref{3_switch}. Condition 2 imposes that $uv,ux,xa,ab,yb,vy$ are distinct edges or non-edges in $G$. Condition 3 ensures $ux,vy,ab$ are not edges in $G$ or $M$, since we add them during the switching. We say $(x,a,y,b)$ is a forward 3-switching from $G$ to $G'$. Let $f_3(G)$ denote the number of forward 3-switchings for a specific $G \in \Gs_{\db}(uv, M)$. We obtain the following bound on $f_3(G)$.

\begin{claim}\label{forward_3_switch_upper}
For all $G \in \Gs_{\db}(uv,M)$,
\begin{equation*} % \label{for_upper_bound}
   f_3(G) \leq (2m(G))^2.
\end{equation*}
\end{claim}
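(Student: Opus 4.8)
The plan is to bound $f_3(G)$ by counting the ways to select an ordered $4$-tuple $(x,a,y,b)$ satisfying Condition~1 alone, since dropping Conditions~2 and~3 only increases the count. First I would observe that Condition~1 requires $xa$ and $yb$ to be edges of $G$ (the edge $uv$ is already fixed by the definition of $\Gs_{\db}(uv,M)$, so it contributes nothing to the count). Counting ordered edges: there are exactly $2m(G)$ ordered pairs $(x,a)$ with $xa \in G$, and independently $2m(G)$ ordered pairs $(y,b)$ with $yb \in G$. Multiplying gives at most $(2m(G))^2$ choices in total, which is precisely the claimed bound.

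The one subtlety to address is why selecting the ordered pairs $(x,a)$ and $(y,b)$ as oriented edges is the correct enumeration, rather than, say, double-counting or under-counting. Here I would point out that the $4$-tuple $(x,a,y,b)$ in the definition is genuinely ordered, and the role of each coordinate is pinned down by the switching picture (Figure~\ref{3_switch}): $x$ is the vertex that gets joined to $u$, $a$ its old partner, $y$ the vertex joined to $v$, $b$ its old partner. So each valid forward $3$-switching corresponds to exactly one choice of an ordered edge $(x,a)$ together with an ordered edge $(y,b)$, and conversely every such pair of ordered edges is a candidate (most of which will be discarded by Conditions~2 and~3). Hence $f_3(G) \le (2m(G)) \cdot (2m(G)) = (2m(G))^2$.

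There is essentially no obstacle here — this is the easy direction, mirroring Claim~\ref{backward_2_switch_upper}. The only thing to be careful about is not accidentally trying to prove a sharper bound: one might be tempted to subtract off the cases where, e.g., $xa = yb$ or where the endpoints coincide, but since we only need an upper bound and the subsequent argument (pairing forward and backward switchings via the double-counting identity $\sum_G f_3(G) = T = \sum_{G'} b_3(G')$) tolerates this slack, the crude count suffices. I would therefore keep the proof to two or three lines.

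\begin{proof}[Proof of Claim \ref{forward_3_switch_upper}]
We relax the conditions, counting only ordered $4$-tuples $(x,a,y,b) \in W^4$ satisfying Condition~1, since this count dominates $f_3(G)$. The edge $uv$ is fixed. There are exactly $2m(G)$ ordered pairs $(x,a)$ with $xa$ an edge of $G$, and independently $2m(G)$ ordered pairs $(y,b)$ with $yb$ an edge of $G$. Therefore $f_3(G) \leq (2m(G))^2$.
\end{proof}
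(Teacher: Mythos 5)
Your proposal is correct and follows exactly the paper's argument: drop Conditions 2 and 3, count ordered pairs $(x,a)$ with $xa \in G$ (there are $2m(G)$ of them, each edge contributing two orientations) and likewise for $(y,b)$, and multiply. Nothing further is needed.
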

\begin{proof}
    We exactly count the number of $4$-tuples satisfying Condition 1. There are $2m(G)$ choices for $x$ and $a$, where $xa \in G$; indeed there are $m(G)$ choices for an edge in $G$, and then we choose one endpoint of this edge to be $x$ and the other endpoint to be $a$. Similarly, there are $2m(G)$ choices for $y$ and $b$.
\end{proof}

\subsubsection*{Backward 3-Switchings}
We now define a backward 3-switching.
\begin{defn}[Backward 3-Switching]
    Given $G' \in \Gs_{\db}(\emptygraph, M + uv)$, we call an ordered $4$-tuple $(x,a,y,b) \in W^4$ a \textit{backward 3-switching} on $G'$ if the following conditions hold:
    \begin{enumerate}
    \setcounter{enumi}{3}
    \item $ux,vy,ab$ are edges in $G'$ (not necessarily distinct).
    \item $\{a,b\} \cap \{u,v\} = \emptyset$.
    \item $x \notin \{a,b\}$ and $xa$ is not an edge in $G'$ or $M$.
    \item $y \notin \{a,b\}$ and $yb$ is not an edge in $G'$ or $M$.
\end{enumerate}
\end{defn}

Given such a $4$-tuple, one can modify the graph $G'$ by deleting the edges $ux,vy,ab$ and adding the edges $uv,xa,yb$ to obtain a graph $G \in \Gs_{\db}(uv, M)$. Condition 4 ensures that $ux,vy,ab$ are edges in $G'$ that we delete during the switching. Condition 5 ensures $u$ and $v$ are distinct from $a$ and $b$, which implies the edges $ux,vy$ and $ab$ are distinct when combined with Condition 4. Conditions 6 and 7 imply $xa$ and $yb$ are non-edges in $G'$ and also are not forbidden (edges in $M$). Conditions 4, 5, 6 and 7 together imply $u,v,x,y,a,b$ are distinct except $x=y$ is permitted. Given $G' \in \Gs_{\db}(\emptygraph, M + uv)$, let $b_3(G')$ denote the number of backward switchings from $G'$. We introduce the bounds on $b_3(G')$.
\begin{claim}\label{backward_switch_3_lower_general}
    For all $G' \in \Gs_{\db}(\emptygraph,M+uv)$,
    \begin{equation*}
        b_3(G') \leq 2m(G) d_ud_v
    \end{equation*}
    and
    \begin{equation*}
        b_3(G') \geq 2m(G) d_u d_v \cdot g(\db,\emptygraph,M,uv).
    \end{equation*}
\end{claim}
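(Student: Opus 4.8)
The plan is to prove Claim~\ref{backward_switch_3_lower_general} by inclusion--exclusion, exactly mirroring the structure of the forward 2-switching count in Claim~\ref{forward_2_switch_lower} but now carried out in the graph $G' \in \Gs_{\db}(\emptygraph, M+uv)$. First I would fix $G'$ and observe that, by Condition~4, any backward 3-switching $(x,a,y,b)$ is obtained by: choosing the edge $ux \in G'$ (there are $d_u$ choices for $x$, since $\db$ is the degree sequence and $ux \in G'$ forces $x \in \partial_{G'}(u)$), choosing the edge $vy \in G'$ (there are $d_v$ choices for $y$), and then choosing an \emph{ordered} edge $ab \in G'$ (there are $2m(G)$ such ordered pairs, as in Claim~\ref{forward_3_switch_upper}). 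This gives the clean upper bound $b_3(G') \le 2m(G)\, d_u d_v$ immediately, since Conditions~5, 6, 7 only cut down this count.

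For the lower bound I would start from the exact count $d_u d_v \cdot 2m(G)$ of $4$-tuples satisfying Condition~4 and subtract an upper bound on the number of \emph{bad} tuples --- those satisfying Condition~4 but violating at least one of Conditions~5, 6, 7. The bad tuples split into the cases: (i) $a \in \{u,v\}$ or $b \in \{u,v\}$; (ii) $x \in \{a,b\}$ or $xa \in G' \cup M$; (iii) $y \in \{a,b\}$ or $yb \in G' \cup M$. For each case I would fix the choice of $x$ (i.e.\ the edge $ux$, $d_u$ ways) and of $y$ (the edge $vy$, $d_v$ ways) and bound the number of forbidden ordered pairs $ab \in G'$. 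The key estimate is that for a fixed vertex $x$, the number of ordered edges $ab \in G'$ with $a = x$ or $xa \in G' \cup M$ is at most $D^{\{x\}}(d_x + m_x) \le D(\alpha\text{-type quantity})$ after summing the degrees of the relevant neighbours; similarly for $y$; and the pairs hitting $\{u,v\}$ contribute a bounded amount controlled by $d_u + d_v + m_u + m_v$ or so. Summing these over the $d_u d_v$ choices of $(x,y)$ and bounding the neighbour-degree sums by the degree sum function $D_H$ evaluated at $\alpha_u + 2$ and $\alpha_v + 2$ (this is precisely where the $+2$ and the $\alpha$-value enter, since $x=y$ can coincide and we have up to $2$ extra excluded vertices among $\{a,b\}$) should yield $b_3(G') \ge 2m(G)\, d_u d_v\bigl(1 - \frac{D(\dots) + D(\dots)}{2m(G)}\bigr) = 2m(G)\, d_u d_v \cdot g(\db,\emptygraph,M,uv)$ after substituting the definition of $g$.

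The main obstacle I anticipate is the bookkeeping for the bad-pair count: unlike the 2-switching case, here the forbidden configurations depend on \emph{both} $x$ and $y$ simultaneously (through the constraint $\{a,b\} \cap \{x,y\} = \emptyset$ and through $ab$ being a single ordered edge that must avoid being adjacent to either $x$ or $y$), so naive summation risks either double-counting or producing an error term with the wrong shape. The trick will be to bound the count of bad $ab$ for \emph{fixed} $(x,y)$ by a sum over neighbours of $x$ (for the $xa \in G'\cup M$ or $a = x$ violations) plus a sum over neighbours of $y$, plus an $O(d_u + d_v + m_u + m_v)$ term for the Condition~5 violations, and then --- crucially --- to observe that averaging $\sum_{x \in \partial_{G'}(u)} D^{\{x\}}(d_x + m_x - 1)$ over the $d_u$ choices of $x$ is at most $d_u \cdot D(\alpha_u(\db,\lb,\db,\hb) + 2)$ by concavity of $D$ (Lemma~\ref{deg_sum_prop}(ii)) and the definition of the $\alpha$-value, since the relevant degrees being summed are at most $\max\{d_u, 1\}$-many of the largest in $\db + \mb$ (or the shifted sequence after the $\db \mapsto \db - \hb$, $M \mapsto H \cup L$ substitution). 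Once that concavity/averaging step is set up correctly, the rest is routine algebra matching the definition of $g(\db,\emptygraph,M,uv)$, followed by the same $\db \mapsto \db - \hb$, $M \mapsto H \cup L$ substitution used at the end of the Part~A proof to transfer the bound into the form stated in Lemma~\ref{ratio_bound_lemma}(B).
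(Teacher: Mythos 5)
Your proposal is correct and follows essentially the same route as the paper: an exact count of the tuples satisfying Condition~4 for the upper bound, then inclusion--exclusion over the Condition-5 ("terrible") and Condition-6/7 ("bad") violations, with the neighbour-degree sums bounded by the degree sum function and concavity (Jensen) producing the $\alpha$-values and hence $g(\db,\emptygraph,M,uv)$. The only quibble is your heuristic for the $+2$: in the paper it arises from absorbing the $d_u+d_v$ contribution of the Condition-5 count into the degree sum via Lemma~\ref{deg_sum_prop}(i) (re-admitting the excluded vertices $u,v$), not from the possible coincidence $x=y$, but this does not affect the validity of the plan.
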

\begin{proof}
    The upper bound arises by counting $4$-tuples $(x,a,y,b) \in W^4$ satisfying Condition 4. Indeed there are $d_u$ choices for $x$ and $d_v$ choices for $y$ such that $ux,vy \in G'$. There are at most $2m(G)$ choices for the edge $ab \in G'$. Therefore, $b_3(G') \leq 2m(G) d_u d_v$.

For the lower bound we use inclusion-exclusion. Starting from the number of $4$-tuples satisfying Condition 4, we will subtract upper bounds on the number of $4$-tuples satisfying Condition 4 but \textbf{not} satisfying Condition $5,6$ or $7$. We call a $4$-tuple \textit{terrible} if it satisfies Condition 4 but not Condition 5. We call a $4$-tuple \textit{bad} if it satisfies Conditions 4 and 5 but not Condition $6$ or $7$. We will first provide an exact count of the terrible $4$-tuples and then an upper bound on the number of bad $4$-tuples. The lower bound on $b_3(G')$ is obtained by starting with the number of $4$-tuples satisfying Condition 4, subtracting the number of terrible $4$-tuples, and then subtracting the upper bound on the number of bad $4$-tuples.

\begin{figure}[htbp]
\centering

\begin{subfigure}[b]{0.24\textwidth}
\centering
\begin{tikzpicture}[
  every edge/.style = {draw=black,very thick},
  vrtx/.style args = {#1/#2}{
    circle, draw, thick, fill=white,
    minimum size=4mm, label=#1:#2
  }
]
\node(u) [vrtx=above/, label=above:{$u=a$}] at (-1.3, 0.75) {};
\node(v) [vrtx=left/$v$] at (-1.3, -0.75) {};
\node(y) [vrtx=above/$y$] at (0, -1.5) {};
\node(x) [vrtx=above/, label=above:{$x$}] at (0, 1.5) {};
\node(a) [vrtx=right/$b$] at (0, 0.75) {};
\path (u) edge[dashed] (v)
      (u) edge (x)
      (u) edge (a)
      (v) edge (y);
\end{tikzpicture}
\caption*{(A): $u = a$}
\end{subfigure}
\begin{subfigure}[b]{0.24\textwidth}
\centering
\begin{tikzpicture}[every edge/.style = {draw=black,very thick},
  vrtx/.style args = {#1/#2}{
    circle, draw, thick, fill=white,
    minimum size=4mm, label=#1:#2
  }
]
\node(u) [vrtx=above/, label=above:{$u=b$}] at (-1.3, 0.75) {};
\node(v) [vrtx=left/$v$] at (-1.3, -0.75) {};
\node(y) [vrtx=above/$y$] at (0, -1.5) {};
\node(x) [vrtx=above/, label=above:{$x$}] at (0, 1.5) {};
\node(a) [vrtx=right/$a$] at (0, 0.75) {};
\path (u) edge[dashed] (v)
      (u) edge (x)
      (u) edge (a)
      (v) edge (y);
\end{tikzpicture}
\caption*{(B): $u = b$}
\end{subfigure}
\begin{subfigure}[b]{0.24\textwidth}
\centering
\begin{tikzpicture}[every edge/.style = {draw=black,very thick},
  vrtx/.style args = {#1/#2}{
    circle, draw, thick, fill=white,
    minimum size=4mm, label=#1:#2
  }
]
\node(u) [vrtx=above/, label=left:{$u$}] at (-1.3, 0.75) {};
\node(v) [vrtx=above/, label=below:{$v = a$}] at (-1.3, -0.75) {};
\node(y) [vrtx=right/$y$] at (0, -1.5) {};
\node(x) [vrtx=above/, label=above:{$x$}] at (0, 1.5) {};
\node(a) [vrtx=right/$b$] at (0, -0.75) {};
\path (u) edge[dashed] (v)
      (u) edge (x)
      (v) edge (a)
      (v) edge (y);
\end{tikzpicture}
\caption*{(C): $v = a$}
\end{subfigure}
\begin{subfigure}[b]{0.24\textwidth}
\centering
\begin{tikzpicture}[every edge/.style = {draw=black,very thick},
  vrtx/.style args = {#1/#2}{
    circle, draw, thick, fill=white,
    minimum size=4mm, label=#1:#2
  }
]
\node(u) [vrtx=above/, label=left:{$u$}] at (-1.3, 0.75) {};
\node(v) [vrtx=above/, label=below:{$v = b$}] at (-1.3, -0.75) {};
\node(y) [vrtx=right/$y$] at (0, -1.5) {};
\node(x) [vrtx=above/, label=above:{$x$}] at (0, 1.5) {};
\node(a) [vrtx=right/$a$] at (0, -0.75) {};
\path (u) edge[dashed] (v)
      (u) edge (x)
      (v) edge (a)
      (v) edge (y);
\end{tikzpicture}
\caption*{(D): $v = b$}
\end{subfigure}

\caption{Terrible cases, satisfying Condition 4 but not Condition 5}
\label{fig:terrible_cases}
\end{figure}

\textbf{Terrible Cases: Exact Count.}
    Please refer to Figure \ref{fig:terrible_cases}, which displays the terrible cases not satisfying Condition 5. There are $d_u d_v$ choices for $x,y$ such that $ux$ and $vy$ are both in $G'$. For each of these choices, we could (A) fix $u = a$ and set any of the $d_u$ neighbours of $u$ in $G'$ to be vertex $b$ and obtain $ab \in G'$ such that $u=a$, where we permit $x = b$. Further, we could (B) set $u = b$ and choose $a$ in $d_u$ ways, where we permit $x = a$. In a similar way, we could choose (C) $v = a$ and set any of the $d_v$ neighbours of $v$ in $G'$ to be vertex $b$, where $b = y$ is permitted. Further, we could set (D) $v = b$ and set any of the $d_v$ neighbours of $v$ in $G'$ to be $a$, where $a = y$ is permitted. This covers all cases of $\{a,b\} \cap \{u,v\} \neq \emptyset$ and shows there are exactly $d_ud_v (2d_u + 2d_u)$ terrible $4$-tuples.

\begin{figure}[htbp]
\centering
\begin{minipage}{0.3\textwidth}
  \centering
  \begin{tikzpicture}[
    every edge/.style = {draw=black,very thick},
    vrtx/.style args = {#1/#2}{% 
      circle, draw, thick, fill=white,
      minimum size=4mm, label=#1:#2}
  ]
  \node(u) [vrtx= left/$u$] at (-1.29904, 0.75) {};
  \node(v) [vrtx= left/$v$] at (-1.29904, -0.75) {};
  \node(y) [vrtx= above/$y$] at (0, -1.5) {};
  \node(x) [vrtx= above/, label = above : {$x=a$}] at (0, 1.5) {};
  \node(a) [vrtx= right/$b$] at (1.29904, 0.75) {};

  \path   (u) edge[dashed] (v)
          (u) edge  (x)
          (x) edge  (a)
          (v) edge  (y);
  \end{tikzpicture}

  \parbox{\linewidth}{\centering (A) : $x = a$}
\end{minipage}
\hfill
\begin{minipage}{0.3\textwidth}
  \centering
  \begin{tikzpicture}[
    every edge/.style = {draw=black,very thick},
    vrtx/.style args = {#1/#2}{% 
      circle, draw, thick, fill=white,
      minimum size=4mm, label=#1:#2}
  ]
  \node(u) [vrtx= left/$u$] at (-1.29904, 0.75) {};
  \node(v) [vrtx= left/$v$] at (-1.29904, -0.75) {};
  \node(y) [vrtx= above/$y$] at (0, -1.5) {};
  \node(x) [vrtx= above/$x$] at (0, 1.5) {};
  \node(a) [vrtx= right/$a$] at (1.29904, 0.75) {};
  \node(b) [vrtx= right/$b$] at (1.29904, -0.75) {};

  \path   (u) edge[dashed] (v)
          (u) edge   (x)
          (x) edge[red] (a)
          (v) edge  (y)
          (a) edge (b);
  \end{tikzpicture}

  \parbox{\linewidth}{\centering (B) : $xa \in G' \cup M$}
\end{minipage}
\hfill
\begin{minipage}{0.3\textwidth}
  \centering
  \begin{tikzpicture}[
    every edge/.style = {draw=black,very thick},
    vrtx/.style args = {#1/#2}{% 
      circle, draw, thick, fill=white,
      minimum size=4mm, label=#1:#2}
  ]
  \node(u) [vrtx= left/$u$] at (-1.29904, 0.75) {};
  \node(v) [vrtx= left/$v$] at (-1.29904, -0.75) {};
  \node(y) [vrtx= above/$y$] at (0, -1.5) {};
  \node(x) [vrtx= above/, label = above : {$x=b$}] at (0, 1.5) {};
  \node(a) [vrtx= right/$a$] at (1.29904, 0.75) {};

  \path   (u) edge[dashed] (v)
          (u) edge  (x)
          (x) edge  (a)
          (v) edge  (y);
  \end{tikzpicture}

  \parbox{\linewidth}{\centering (C) : $x = b$}
\end{minipage}

\caption{Some bad cases, satisfying Conditions 4 and 5 but not Condition 6}
\label{bad_con_5}
\end{figure}

\textbf{Bad Cases: Upper Bound.}
    A bad $4$-tuple with either satisfies Conditions 4 and 5 but not Condition 6, or it will satisfy Conditions 4 and 5 but not Condition 7. Let $(x,a,y,b)$ satisfy Conditions 4 and 5 but not Condition 6. Please refer to Figure \ref{bad_con_5} which illustrates these cases.
    We will upper bound the number of such $4$-tuples. If Condition 6 is not satisfied, then either (A) $x = a$ or (B) $xa$ is an edge in $G'$ or $M$ or (C) $x = b$. We bound all three cases for (A), (B) and (C) simultaneously. There are $d_v$ choices for $y$. For each fixed $y$, the number of $(x,a,y,b)$ such that Conditions 4 and 5 are satisfied but Condition 6 is not satisfied is at most
     \begin{equation}\label{condition_6_bound_complex}
         \sum_{\substack{x \in W \\ ux \in G'}} \Biggl( d_x - 1 + \sum_{\substack{a \in W \\ xa \in G' \cup M \\ a \notin \{u,v\}}} d_a \Biggr).
     \end{equation}
     This count arises as follows. Fix $x \in W$ such that $ux \in G'$. For case (A), there are $d_x - 1$ neighbours of $x$ not equal to $u$ that we could choose to be vertex $b$ (as $u=b$ does not occur if Condition 4 is satisfied). For cases (B) and (C), after fixing $x$ we count the number of directed 2-paths $x \rightarrow a \rightarrow b$ from $x$, where $xa \in G' \cup M$, $ab \in G'$ and $a \notin \{u,v\}$. The number of such 2-paths is at most
     \begin{equation}\label{a_sum}
         \sum_{\substack{a \in W \\ xa \in G' \cup M \\ a \notin \{u,v\}}} d_a.
     \end{equation}
     This is because after fixing $x$ and $a$ such that $xa \in G' \cup M$, there are $d_a$ choices for $b$ resulting in case (B) or (C). None of the choices for $a$ are equal to $u$ or $v$ since Condition 5 is satisfied. Further $a \neq x$ since $a$ is a neighbour of $x$. There are at most $d_x + m_x - 1$ terms in the sum of \eqref{a_sum} since there are $d_x + m_x$ vertices $a$ such that $xa \in G'\cup M$ and we remove the case $a = u$. Therefore, \eqref{a_sum} is bounded above by the sum of the $d_x + m_x - 1$ largest degrees in $G' \setminus M$, where the sum avoids vertices $\{x,u,v\}$. Using this we bound \eqref{condition_6_bound_complex} by
     \begin{align*}
         \sum_{\substack{x \in W \\ ux \in G'}} \Biggl( d_x - 1 + \sum_{\substack{a \in W \\ xa \in G' \cup M \\ a \notin \{u,v\}}} d_a \Biggr) & \leq  \sum_{\substack{x \in W \\ ux \in G'}} \Biggl( d_x - 1 + D^{\{x,u,v\}}(d_x + m_x - 1) \Biggr)  \\
         & \leq  \sum_{\substack{x \in W \\ ux \in G'}} \left(  - 1 + D_H^{\{u,v\}}(d_x + m_x) \right)  & \text{Lemma \ref{deg_sum_prop} (i)} \\
         & = -d_u + \sum_{\substack{x \in W \\ ux \in G'}} D_H^{\{u,v\}}(d_x + m_x). 
     \end{align*}
The above expression is for a fixed $y$. Since there are $d_v$ choices for $y$, the number of bad $4$-tuples satisfying Conditions 4 and 5 but not Condition 6 is at most
\begin{equation}\label{not_6}
    -d_ud_v + d_v \cdot \sum_{\substack{x \in W \\ ux \in G'}} D^{\{u,v\}}(d_x + m_x).
\end{equation}

\begin{figure}[htbp]
\centering
\begin{minipage}{0.3\textwidth}
  \centering
  \begin{tikzpicture}[
    every edge/.style = {draw=black,very thick},
    vrtx/.style args = {#1/#2}{% 
      circle, draw, thick, fill=white,
      minimum size=4mm, label=#1:#2}
  ]
  \node(u) [vrtx= left/$u$] at (-1.29904, 0.75) {};
  \node(v) [vrtx= left/$v$] at (-1.29904, -0.75) {};
  \node(y) [vrtx= above/, label = above : {$y=b$}] at (0, -1.5) {};
  \node(x) [vrtx= above/, label = above : {$x$}] at (0, 1.5) {};
  \node(b) [vrtx= right/$a$] at (1.29904, -0.75) {};

  \path   (u) edge[dashed] (v)
          (u) edge  (x)
          (y) edge  (b)
          (v) edge  (y);
  \end{tikzpicture}
  
  \parbox{\linewidth}{\centering (A) : $y = b$}
\end{minipage}
\hfill
\begin{minipage}{0.3\textwidth}
  \centering
  \begin{tikzpicture}[
    every edge/.style = {draw=black,very thick},
    vrtx/.style args = {#1/#2}{% 
      circle, draw, thick, fill=white,
      minimum size=4mm, label=#1:#2}
  ]
  \node(u) [vrtx= left/$u$] at (-1.29904, 0.75) {};
  \node(v) [vrtx= left/$v$] at (-1.29904, -0.75) {};
  \node(y) [vrtx= above/$y$] at (0, -1.5) {};
  \node(x) [vrtx= above/$x$] at (0, 1.5) {};
  \node(a) [vrtx= right/$a$] at (1.29904, 0.75) {};
  \node(b) [vrtx= right/$b$] at (1.29904, -0.75) {};

  \path   (u) edge[dashed] (v)
          (u) edge   (x)
          (v) edge  (y)
          (y) edge[red] (b)
          (a) edge (b);
  \end{tikzpicture}
  
  \parbox{\linewidth}{\centering (B) : $yb \in G' \cup M$}
\end{minipage}
\hfill
\begin{minipage}{0.3\textwidth}
  \centering
  \begin{tikzpicture}[
    every edge/.style = {draw=black,very thick},
    vrtx/.style args = {#1/#2}{% 
      circle, draw, thick, fill=white,
      minimum size=4mm, label=#1:#2}
  ]
  \node(u) [vrtx= left/$u$] at (-1.29904, 0.75) {};
  \node(v) [vrtx= left/$v$] at (-1.29904, -0.75) {};
  \node(y) [vrtx= above/, label = above : {$y=a$}] at (0, -1.5) {};
  \node(x) [vrtx= above/, label = above : {$x$}] at (0, 1.5) {};
  \node(a) [vrtx= right/$b$] at (1.29904, -0.75) {};

  \path   (u) edge[dashed] (v)
          (u) edge  (x)
          (y) edge  (a)
          (v) edge  (y);
  \end{tikzpicture}
  
  \parbox{\linewidth}{\centering (C) : $y = a$}
\end{minipage}

\caption{Conditions 4 and 5 are satisfied but not Condition 7.}
\label{bad_con_7}
\end{figure}

Now we upper bound the number of bad cases satisfying Conditions 4 and 5 but not satisfying Condition 7. Figure \ref{bad_con_7} displays these cases.
By a symmetric argument, the number of bad $4$-tuples satisfying Conditions 4 and 5 but not Condition 7 is at most
\begin{equation}\label{not_7}
    -d_ud_v + d_u \cdot \sum_{\substack{y \in W \\ vy \in G'}} D^{\{u,v\}}(d_y + m_y).
\end{equation}
Therefore, the total number of bad and terrible $4$-tuples is at most the sum of the terrible $4$-tuples with \eqref{not_6} and \eqref{not_7}. This is
\begin{align*}
     d_u d_v(2d_u + 2d_v) & + \Biggl( -d_ud_v + d_v \cdot \sum_{\substack{x \in W \\ ux \in G'}} D^{\{u,v\}}(d_x + m_x)\Biggr)   \\
     & + \Biggl(-d_u d_v + d_u \cdot \sum_{\substack{y \in W \\ vy \in G'}} D^{\{u,v\}}(d_y + m_y)\Biggr).
\end{align*}
To simplify the above, we distribute the first expression into the two summations. We obtain
\begin{align*}
     d_u^2d_v + d_ud_v^2 & + \Biggl( -d_ud_v + d_v \cdot \sum_{\substack{x \in W \\ ux \in G'}} D^{\{u,v\}}(d_x + m_x)\Biggr)   \\
     {}+ d_u^2d_v + d_ud_v^2  & + \Biggl(-d_ud_v + d_u \cdot \sum_{\substack{y \in W \\ vy \in G'}} D^{\{u,v\}}(d_y + m_y)\Biggr) \numberthis \label{to_simplify}.
\end{align*}
Let us consider the first line of the above expression (the reasoning for the second line follows by a symmetric argument). We have
{\allowdisplaybreaks
\begin{align*}
    d_u^2d_v &+ d_ud_v^2  + \Biggl( -d_ud_v + d_v \cdot \sum_{\substack{x \in W \\ ux \in G'}} D^{\{u,v\}}(d_x + m_x)\Biggr) \\
    & = d_v\sum_{\substack{x \in W \\ ux \in G'}} \Biggl(d_u + d_v  -1 +  D^{\{u,v\}}(d_x + m_x) \Biggr) & \text{$d_u$ terms in the sum} \\
    & \leq d_v\sum_{\substack{x \in W \\ ux \in G'}} \left(-1 +  D(d_x + m_x+2) \right) & \text{Lemma \ref{deg_sum_prop} (i)} \\
    & \leq d_v d_u D(\alpha_u(\db,\mb,\db,\zerob) + 2).
\end{align*}}
The last step follows by Lemma \ref{deg_sum_prop} (ii) and Jensen's inequality. By a symmetric argument, we can upper bound \eqref{to_simplify} and hence the number of bad and terrible $4$-tuples by $d_v d_u \left(D(\alpha_u(\db,\mb,\db,\zerob)+2) + D(\alpha_v(\db,\mb,\db,\zerob)+2) \right)$. Therefore, a lower bound for $b_3(G')$ is
\begin{align*}
    b_3(G') & \geq 2m(G)d_u d_v - d_v d_u \left( D(\alpha_u(\db,\mb,\db,\zerob)+2) + D(\alpha_v(\db,\mb,\db,\zerob)+2) \right) \\
    & = 2m(G) d_u d_v \cdot g(\db,\emptygraph,M,uv). \qedhere
\end{align*}
\end{proof}

Let $T'$ denote the number of pairs $(G,G')$, where $G \in \Gs_{\db}(uv,M)$ and $G' \in \Gs_{\db}(\emptygraph,M+uv)$ and there exists a backward 3-switching from $G'$ to $G$. Observe that this implies there is also a forward 3-switching from $G$ to $G'$, and we have the equality
\begin{equation*}
 \sum_{G' \in \Gs_{\db}(\emptygraph,M+uv)} b_3(G') = T' = \sum_{G \in \Gs_{\db}(uv,M)} f_3(G) . 
\end{equation*}
Claim \ref{forward_3_switch_upper} and Claim
\ref{backward_switch_3_lower_general} then give
\begin{equation*}
    |\Gs_{\db}(\emptygraph,M + uv)| \cdot 2m(G) d_u d_v \cdot g(\db,\emptygraph,M,uv)  \leq |\Gs_{\db}(uv,M)| \cdot (2m(G))^2 . 
\end{equation*}
Observe that the substitutions $\db \mapsto \db - \hb$ and $M \mapsto H \cup L$ give the result as shown in Lemma \ref{ratio_bound_lemma}(B). Indeed under the substitutions, $m(G) \mapsto m(G) - m(H)$ and $g(\db,\emptygraph,M,uv) \mapsto g(\db,H,L,uv)$.
\end{proof}

\subsection{Proof of Corollaries \ref{multiple_edge_cor} and \ref{multiple_forbidden_cor}}
\begin{proof}[Proof of Corollary \ref{multiple_edge_cor}]
Choose vertices $u_1,\dots,u_{m(X)},v_1,\dots,v_{m(X)}$ such that $e_i = u_iv_i$ and $\{e_i\}_{i=1}^{m(X)}$ is an enumeration of the edges in $X$ as given in Theorem \ref{multiple_edge_thm}. We will set assumptions on the functions $f$ and $g$ present there in order to simplify the products.

\textbf{Part A: Upper Bound}.
For $k$ large, using Lemma \ref{deg_sum_prop} (ii) we have
\begin{equation*}
    \frac{D_{X_{i-1}}(d_{u_i} + l_{u_i}) + D_{X_{i-1}}(d_{v_i} + l_{v_i})}{2m(G) - 2m(X_{i-1})} \leq \frac{D(\varDelta_{\partial X}(\db + \lb))}{m(G) - m(X) } \leq \rho(k).
\end{equation*}
Theorem \ref{multiple_edge_thm}(A) then applies. We now bound the product of the terms $f(\db,X_{i-1},L,e_i)$ present there. Letting $C(k) \coloneq \frac{\ln(1-\rho(k))}{\rho(k)}$ and using $1-x \geq e^{C(k) x}$ for $x \in [0,\rho(k)]$ gives
\begin{align*}
    \prod_{i=1}^{m(X)}  f(\db, X_{i-1}, L, e_i) & \geq \exp\biggl(C(k) \cdot \sum_{i=1}^{m(X)} \frac{D_{X_{i-1}}(d_{u_i} + l_{u_i}) + D_{X_{i-1}}(d_{v_i} + l_{v_i}) }{2m(G) - 2m(X_{i-1})}\biggr)  \\
    & \geq \exp\left(C(k) \cdot \frac{m(X)\cdot D(\kappa) }{m(G) - m(X)}\right) \numberthis\label{kappa_expression}  \\
    & \geq \exp\left(C(k) \cdot \frac{m(X)\cdot D(\varDelta_{\partial X}(\db + \lb)) }{m(G) - m(X)}\right), \numberthis\label{final_upper_term} 
\end{align*}
where $\kappa \coloneq \frac{\sum_{j \in W} x_{j}(d_j + l_j)}{2m(X) } \leq \varDelta_{\partial X}( \db + \lb)$.
The second last inequality follows by Lemma \ref{deg_sum_prop} (iii), observing $\sum_{i=1}^{m(X)}  D(d_{u_i} + l_{u_i}) + D(d_{v_i} + l_{v_i}) = \sum_{j \in W} x_{j} \cdot D(d_j + l_j)$, and then applying Jensen's inequality to the degree sum function. As $\rho(k)$ is bounded away from $1$, $C(k)$ is bounded (note that $C(k) \to -1$ if $\rho(k) \to 0$). The assumptions in the corollary then imply \eqref{final_upper_term} is either in $1+o(1)$ or in $O(1)$.

\textbf{Part B: Lower Bound.}
Let $\xb_i$ be the degree sequence of $X_{i}$. Observe that for all $1 \leq i \leq m(X)$ and all $w \in W$, $\alpha_w(\db,\lb,\db,\xb_{i-1}) \leq \gamma(\db,\lb,\db,\xb)$. Consequently, for $k$ large, using Lemma \ref{deg_sum_prop} (ii) gives
\begin{equation*}
    \frac{D_{X_{i-1}}(\alpha_{u_i}(\db,\lb,\db,\xb_{i-1})+2)+D_{X_{i-1}}(\alpha_{v_i}(\db,\lb,\db,\xb_{i-1})+2)}{2m(G) - 2m(X_{i-1})} \leq \frac{D(\gamma(\db,\lb,\db,\xb)+2)}{m(G) - m(X)} \leq \rho(k).
\end{equation*}
Theorem \ref{multiple_edge_thm}(B) then applies. We now bound the product of the terms $g(\db,X_{i-1},L,e_i)$ present there. Let $C(k) \coloneq \frac{\ln(1-\rho(k))}{\rho(k)}$. Again, using $1-x \geq e^{C(k) x}$ for $x \in [0,\rho(k)]$ gives
\begin{align*}
     \prod_{i=1}^{m(X)} & g(\db, X_{i-1}, L, e_i) \\[-2ex]
    &  \geq \exp\biggl(C(k) \cdot \sum_{i=1}^{m(X)}  \frac{D_{X_{i-1}}(\alpha_{u_i}(\db,\lb,\db,\xb_{i-1})+2) + D_{X_{i-1}}(\alpha_{v_i}(\db,\lb,\db,\xb_{i-1})+2)}{2m(G) - 2m(X_{i-1})}\biggr) \\
    &  \geq \exp\left(C(k) \cdot \frac{m(X) \cdot D(\mu+2)}{m(G) - m(X)}\right)\\
    &  \geq \exp\left(C(k) \cdot \frac{m(X) \cdot D(\gamma(\db,\lb,\db,\xb)+2)}{m(G) - m(X)}\right), \numberthis \label{final_lower_term}
\end{align*}
where $\mu \coloneq \frac{\sum_{i \in W} x_{i}\alpha_{i}(\db,\lb,\db,\xb) }{2m(X)} \leq \gamma(\db,\lb,\db,\xb)$. The second inequality follows by Lemma \ref{deg_sum_prop} (iii), observing that $\sum_{i=1}^{m(X)} \!D(\alpha_{u_i}(\db,\lb,\db,\xb)+2) + D(\alpha_{v_i}(\db,\lb,\db,\xb)+2) = \sum_{i \in W} x_{i} D(\alpha_{i}(\db,\lb,\db,\xb)+2)$, and then using Jensen's inequality. Since $\rho(k)$ is bounded away from $1$, $C(k)$ is bounded. The results follow from Theorem \ref{multiple_edge_thm}(B) as \eqref{final_lower_term} is either in $1+o(1)$ or in $\Omega(1)$ under the assumptions, and $\phi(\db,X)^{m(X)}$ is either in $1+o(1)$ or in $O(1)$ using the assumption on $\varLambda(k)$ and the inequality $1+x \leq e^x$.
\end{proof}

\begin{proof}[Proof of Corollary \ref{multiple_forbidden_cor}]
We again set assumptions to control the products of the terms $g$ and $f$ present in Theorem \ref{multiple_forbidden_theorem}. Let $\lb_j$ be the degree sequence of $L_j$.

\textbf{Part A: Upper Bound.}
 Let $C(k) = \frac{\ln(1-\rho(k))}{\rho(k)}$. Observe that for all $1 \leq j\leq m(Y)$ and all $w \in W$, $\alpha_w(\db,\lb_{j-1},\db,\zerob) \leq \gamma(\db,\lb + \yb,\db,\zerob)$. Consequently,
\begin{equation*}
    \frac{D(\alpha_{p_j}(\db,\lb_{j-1},\db,\zerob)+2)+D(\alpha_{q_j}(\db,\lb_{j-1},\db,\zerob)+2)}{2m(G)} \leq \frac{D(\gamma(\db,\lb + \yb,\db,\zerob)+2)}{m(G)} \leq \rho(k).
\end{equation*}
    Theorem \ref{multiple_forbidden_theorem}(A) then applies. We now bound the product of the terms $g(\db,\emptygraph,L_{j-1},\ebar_j)$ present in that theorem. Using $1-x \geq e^{C(k) x}$ for $x \in [0,\rho(k)]$ gives
\begin{align*}
    &\prod_{j=1}^{m(Y)} g(\db, \emptygraph,L_{j-1},\ebar_j)  \\[-2ex]
    &\qquad \geq \exp\biggl( C(k) \cdot \sum_{j=1}^{m(Y)} \frac{D(\alpha_{p_j}(\db,\lb_{j-1},\db,\zerob)+2) + D(\alpha_{q_j}(\db,\lb_{j-1},\db,\zerob)+2)}{2m(G)} \biggr) \\
    &\qquad \geq \exp\left( C(k) \cdot \frac{m(Y) D(\lambda + 2)}{m(G)} \right) \\
    &\qquad \geq \exp\left( C(k) \cdot \frac{m(Y) D(\gamma(\db,\lb + \yb,\db,\zerob) + 2)}{m(G)} \right), \numberthis\label{forbidden_upper_bound}
\end{align*}
where $\lambda \coloneq \frac{\sum_{i\in W} y_{i} \alpha_i(\db,\lb + \yb,\db,\zerob)}{2m(Y)} \leq \gamma(\db,\lb + \yb,\db,\zerob)$. The second last inequality follows by observing $\sum_{j=1}^{m(Y)}  D(\alpha_{p_j}(\db,\lb + \yb,\db,\zerob)+2) + D(\alpha_{q_j}(\db,\lb + \yb,\db,\zerob)+2) = \sum_{i \in W} y_{i} D(\alpha_{i}(\db,\lb + \yb,\db,\zerob)+2)$, and then applying Jensen's inequality. Since $\rho(k)$ is bounded away from $1$, $C(k)$ is bounded. The assumptions of the corollary imply that \eqref{forbidden_upper_bound} is either in $1+o(1)$ or in $O(1)$.

\textbf{Part B: Lower Bound.}

Let $C(k) = \frac{\ln(1-\rho(k))}{\rho(k)}$. Note that for all $1\leq j\leq m(Y)$,
\begin{equation*}
    \frac{D(d_{p_j} + l_{j-1,p_j}) + D(d_{q_j} + l_{j-1,q_j}) }{2m(G)} \leq \frac{ D(\varDelta_{\partial Y}(\db + \lb + \yb))}{m(G)} \leq \rho(k).
\end{equation*}
Theorem \ref{multiple_forbidden_theorem}(B) then  applies. We now bound the product of the terms $f(\db,\emptygraph,L_{j-1},\ebar_j)$ present there. Using $1-x \geq e^{C(k) x}$ for $x \in [0,\rho(k)]$ gives
\begin{align*}
     \prod_{j=1}^{m(Y)} f(\db,\emptygraph,L_{j-1},p_iq_i) & \geq \exp\Biggl(\! C(k) \cdot  \sum_{j=1}^{m(Y)} \frac{D(d_{p_j} + l_{j-1,p_j}) + D(d_{q_j} + l_{j-1,q_j}) }{2m(G)} \Biggr) \\
     & \geq \exp\biggl(\! C(k) \cdot \frac{m(Y) D(\eta)}{m(G)} \biggr) \\
     & \geq \exp\biggl(\! C(k) \cdot \frac{m(Y) D(\varDelta_{\partial Y}(\db + \lb + \yb))}{m(G)} \biggr), \numberthis\label{lower_bound_forbidden}
\end{align*}
where $\eta \coloneq \frac{\sum_{i \in W} y_{i}(d_{i} + l_{i} + y_i)}{2m(Y)} \leq \varDelta_{\partial Y}(\db + \lb + \yb)$. The second last inequality follows by Lemma \ref{deg_sum_prop} (ii), observing $\sum_{j=1}^{m(Y)} D(d_{p_j} + l_{m(Y),p_j}) + D(d_{q_j} + l_{m(Y),q_j}) = \sum_{i \in W} y_{i} D(d_{i} + l_{m(Y),i})$, and then applying Jensen's inequality. Again, $C(k)$ is bounded since $\rho(k)$ is bounded. The results now follow from the assumptions of the theorem, as \eqref{lower_bound_forbidden} is either in $1+o(1)$ or in $\Omega(1)$ depending on the behaviour of $m(Y) \cdot \rho(k)$.
\end{proof}

\section{Proof of Bipartite Cases: Lemma \ref{single_edge_prob_lemma_bipartite} and Theorems \ref{multiple_edges_bipartite}, \ref{multiple_forbidden_theorem_bipartite}}\label{bipartite_proofs_section}
The proofs are similar to the generic case. Indeed the main difference is accounting for the bipartition in the switching argument of Lemma \ref{ratio_bound_lemma_bipartite}.

\begin{lemma}\label{ratio_bound_lemma_bipartite}
    Let $H,L \subseteq K_{m,n}$ with $H \cap L = \emptygraph$. Let $(\hb,\ib)$ be the degree sequence of $H$. Let $uv \notin H \cup L$ be an edge of $K_{m,n}$.
    \begin{enumerate}[label=\textbf{(\Alph*)}, leftmargin=*]
    \item   If $p(\sbb,\tb,H,L,uv) >0$, then
    \begin{equation*}
        \frac{|\Bs_{(\sbb,\tb)}(H+uv,L)|}{|\Bs_{(\sbb,\tb)}(H,L+uv)|} \leq \frac{(s_u-h_u)(t_v-i_v)}{m(G) - m(H)} \cdot \frac{1}{p(\sbb,\tb,H,L,uv) }.
    \end{equation*}
    \item It always holds that
    \begin{equation*}
         \frac{|\Bs_{(\sbb,\tb)}(H+uv,L)|}{|\Bs_{(\sbb,\tb)}(H,L+uv)|}  \geq \frac{(s_u-h_u)(t_v-i_v)}{m(G) - m(H)} \cdot q(\sbb,\tb, H, L, uv).   
    \end{equation*}
\end{enumerate}
The inequality of Part (B) always holds, but is only useful for probability estimates if we assume $q(\sbb,\tb,H,L,uv) > 0$, since this yields valid probabilities in $[0,1]$.
\end{lemma}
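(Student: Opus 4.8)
The plan is to transcribe the proof of Lemma~\ref{ratio_bound_lemma}, adjusting the $2$- and $3$-switchings (Figures~\ref{2_switch} and~\ref{3_switch}) so that they respect the bipartition, and then to reduce the general statement to the case $H=\emptygraph$ by the substitution $\sbb\mapsto\sbb-\hb$, $\tb\mapsto\tb-\ib$, $M\mapsto H\cup L$, which sends $m(G)\mapsto m(G)-m(H)$ and sends $p,q$ with $H=\emptygraph$ to the general $p,q$, exactly as at the end of the generic proof. So it suffices to bound $|\Bs_{(\sbb,\tb)}(uv,M)|/|\Bs_{(\sbb,\tb)}(\emptygraph,M+uv)|$ for $M\subseteq K_{m,n}$ with $uv\notin M$, $u\in S$, $v\in T$, where $(\lb,\mb)$ denotes the degree sequence of $M$.

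For Part~(A) I would use a $2$-switching. A forward switching on $G\in\Bs_{(\sbb,\tb)}(uv,M)$ is an edge $ab\in G$ with $a\in S$, $b\in T$, $ab\ne uv$, $\{a,b\}\cap\{u,v\}=\emptyset$ and $ub,av\notin G\cup M$; deleting $uv,ab$ and adding $ub,av$ gives some $G'\in\Bs_{(\sbb,\tb)}(\emptygraph,M+uv)$. There are $m(G)-1$ candidate edges, and by inclusion--exclusion the bad ones are exactly those with $ub\in G\cup M$ or $av\in G\cup M$ (the cases $a=u$ and $b=v$ are subsumed, since then $ub$ resp.\ $av$ already lies in $G$). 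Grouping by the common endpoint and using Lemma~\ref{deg_sum_prop}(i), the number with $ub\in G\cup M$ is at most $(t_v-1)+T^{\{v\}}(s_u+l_u-1)\le T(s_u+l_u)-1$, and symmetrically for $av$, so $f_2(G)\ge m(G)+1-T(s_u+l_u)-S(t_v+m_v)\ge m(G)\,p(\sbb,\tb,\emptygraph,M,uv)$. For a backward switching from $G'$ one picks $b$ among the $s_u$ neighbours of $u$ and $a$ among the $t_v$ neighbours of $v$ (and checks $ab\notin G'\cup M$), so $b_2(G')\le s_u t_v$. Counting forward/backward pairs in two ways gives $|\Bs_{(\sbb,\tb)}(uv,M)|\,m(G)\,p\le|\Bs_{(\sbb,\tb)}(\emptygraph,M+uv)|\,s_u t_v$, which on dividing by $m(G)p>0$ is Part~(A).

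For Part~(B) I would use a $3$-switching. Because $u\in S$ and $v\in T$, the switching structure forces $u,y,a\in S$ and $v,x,b\in T$, and --- unlike the generic case --- the six vertices $u,v,x,y,a,b$ are then automatically distinct, so no degenerate ``$x=y$'' case needs to be admitted. A forward switching on $G\in\Bs_{(\sbb,\tb)}(uv,M)$ deletes $uv,ax,yb$ and adds $ux,vy,ab$; counting edges gives $f_3(G)\le m(G)^2$. A backward switching on $G'\in\Bs_{(\sbb,\tb)}(\emptygraph,M+uv)$ deletes $ux,vy,ab$ and adds $uv,xa,yb$; counting the tuples satisfying only the incidence condition gives the matching bound $b_3(G')\le s_u t_v\,m(G)$. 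For the lower bound on $b_3(G')$ I would use inclusion--exclusion: the ``terrible'' tuples ($a=u$, or $b=v$) number $s_u^2 t_v+s_u t_v^2$, and the ``bad'' tuples are those with $xa\in G'\cup M$ (resp.\ $yb\in G'\cup M$; note $x=b$ is subsumed, since then $xa=ab\in G'$). Fixing the $t_v$ choices of $y$, summing over the $s_u$ neighbours $x$ of $u$, then over the $t_x+m_x-1$ vertices $a\ne u$ with $ax\in G'\cup M$ (each contributing $\le s_a$ choices of $b$), and applying Jensen to the concave function $S$ together with $\sum_{x:\,ux\in G'}(t_x+m_x)\le D(\tb+\mb,s_u)$, bounds the bad tuples of the first kind by $t_v s_u\,S^{\{u\}}\!\big(\alpha_u(\tb,\mb,\sbb,\zerob)-1\big)$, which Lemma~\ref{deg_sum_prop}(i) turns into $t_v s_u\big(S(\alpha_u(\tb,\mb,\sbb,\zerob))-s_u\big)$. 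The $-s_u$ here cancels the terrible contribution $s_u^2 t_v$ exactly; symmetrically on the other side. Hence the total number of bad and terrible tuples is at most $s_u t_v\big(S(\alpha_u(\tb,\mb,\sbb,\zerob))+T(\alpha_v(\sbb,\lb,\tb,\zerob))\big)$, so $b_3(G')\ge s_u t_v\,m(G)\,q(\sbb,\tb,\emptygraph,M,uv)$, and double counting yields $|\Bs_{(\sbb,\tb)}(uv,M)|/|\Bs_{(\sbb,\tb)}(\emptygraph,M+uv)|\ge s_u t_v\,q/m(G)$ unconditionally, as claimed in Lemma~\ref{ratio_bound_lemma_bipartite}(B).

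The main obstacle is the bookkeeping in this last lower bound: one must track the vertex-exclusion sets carefully (which neighbours are forced distinct from $u$ and $v$) so that a single application of Lemma~\ref{deg_sum_prop}(i) produces precisely the ``$-s_u$'' (resp.\ ``$-t_v$'') needed to absorb the terrible count --- this off-by-one is the bipartite counterpart of the ``$+2$'' in the generic function $g$, and getting it exactly right is where $p$ and $q$ acquire their stated form. Everything else is a routine, if lengthy, transcription of the generic argument with the bipartition constraints inserted, after which the substitution $\sbb\mapsto\sbb-\hb$, $\tb\mapsto\tb-\ib$, $M\mapsto H\cup L$ finishes both parts.
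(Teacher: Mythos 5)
Your proposal is correct and follows essentially the same route as the paper: reduction to $H=\emptygraph$ via the substitution $(\sbb,\tb)\mapsto(\sbb,\tb)-(\hb,\ib)$, $M\mapsto H\cup L$, a 2-switching with inclusion--exclusion for Part (A), and a 3-switching for Part (B) whose backward count is bounded below by subtracting the terrible and bad tuples using Jensen's inequality together with Lemma \ref{deg_sum_prop}. The only cosmetic deviations are that you apply Jensen before Lemma \ref{deg_sum_prop}(i) (at a possibly non-integer argument, which is harmless by piecewise linearity) whereas the paper does the reverse, and your aside that the six vertices are ``automatically distinct'' in the bipartite setting overstates things --- only cross-side coincidences like $x=y$ disappear, while $a=u$, $b=v$, $x=b$, $y=a$ must still be excluded, exactly as your own terrible/bad counts do.
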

The proof of Lemma \ref{ratio_bound_lemma_bipartite} is left to the end of this section. Firstly, we will sketch the proof of Lemma \ref{single_edge_prob_lemma_bipartite} and Theorems \ref{multiple_edges_bipartite}, \ref{multiple_forbidden_theorem_bipartite}.
\begin{proof}[Proof Sketch: Lemma \ref{single_edge_prob_lemma_bipartite} and Theorem \ref{multiple_edges_bipartite}]
    The proof is extremely similar to the generic case, but we use the bounds presented in Lemma \ref{ratio_bound_lemma_bipartite} combined with
    \begin{equation*}
        \Pa(X \subseteq G) = \prod_{i=1}^{m(X)}\left(1 + \frac{|\Bs_{(\sbb,\tb)}(X_{i-1},L+e_i)|}{|\Bs_{(\sbb,\tb)}(X_{i-1}+e_i,L)}|\right)^{-1}. \qedhere
    \end{equation*}
\end{proof}

\begin{proof}[Proof of Theorem \ref{multiple_forbidden_theorem_bipartite}]
    The proof is extremely similar to the generic case except we use the bounds presented in Lemma \ref{ratio_bound_lemma_bipartite} combined with
\begin{equation*}
    \Pa(Y \cap G = \emptygraph) = \prod_{j=1}^{m(Y)} \left(1 + \frac{|\Bs_{(\sbb,\tb)}(\ebar_j,L_{j-1})|}{|\Bs_{(\sbb,\tb)}(\emptygraph,L_{j-1}+\ebar_j)|}\right)^{-1}. \qedhere
\end{equation*}
\end{proof}

\subsection{Proof of Lemma \ref{ratio_bound_lemma_bipartite}}
Let $M \subseteq K_{m,n}$ be a bipartite graph with degree sequence $(\mb,\nb)$ and $uv \in K_{m,n}$ be an edge with $uv \notin M$. We will use the method of switchings to estimate the ratio $\frac{|\Bs_{(\sbb,\tb)}(uv,M)|}{|\Bs_{(\sbb,\tb)}(\emptygraph,M+uv)|}$, where $(\sbb,\tb)$ is a bipartite degree sequence with sizes $m$ and $n$. Again, this is sufficient as we can perform the substitutions $(\sbb,\tb) \mapsto (\sbb,\tb) - (\hb,\ib)$ and $M \mapsto H \cup L$ 
 to obtain the estimates presented in Lemma \ref{ratio_bound_lemma_bipartite}, where $(\hb,\ib)$ is the degree sequence of $H$. The calculations are similar to the generic case, but we preserve the bipartition during the switching, so certain cases do not occur. Again, solid black lines are edges in $G$ and dashed black lines are non-edges. Red edges are known to be in $G \cup M$. Vertices in $S$ are white and vertices in $T$ black.

\subsection*{Part A Proof: 2-Switchings}
\subsubsection*{Forward 2-Switchings}

\begin{figure}[htbp]
\centering
   \begin{tikzpicture}[
every edge/.style = {draw=black,very thick},
 vrtx/.style args = {#1/#2}{% 
      circle, draw, thick, fill=white,
      minimum size=4mm, label=#1:#2}
                    ]
\node(u) [vrtx= left/$u$] at (0-5, 2) {};
\node(v) [vrtx= left/$v$,fill = black] at (0-5, 0) {};
\node(y) [vrtx= right/$y$,fill = black] at (2-5, 0) {};
\node(x) [vrtx= right/$x$] at (2-5, 2) {};

\path   (u) edge (v)
        (x) edge (y)
        (u) edge[dashed]   (y)
        (v) edge[dashed]   (x);

\node(u') [vrtx= left/$u$] at (0+3, 2) {};
\node(v') [vrtx= left/$v$,fill = black] at (0+3, 0) {};
\node(y') [vrtx= right/$y$,fill = black] at (2+3, 0) {};
\node(x') [vrtx= right/$x$] at (2+3, 2) {};

\path   (u') edge[dashed] (v')
        (x') edge[dashed] (y')
        (u') edge  (y')
        (v') edge  (x');

\node(forward) at (-0,0.3) (<name>) {Forward 2 - Switching};
\draw[->, above, thick] (-1.5,0) -- (1.5,0);

\node(back) at (0,0.3+2) (<name>) {Backward 2 - Switching};
\draw[<-, above, thick] (-1.5,0+2) -- (1.5,0+2);
\end{tikzpicture} 
\caption{A 2-switching between $\Bs_{(\sbb,\tb)}(uv,M)$ and $\Bs_{(\sbb,\tb)}(\emptygraph,M+uv)$}
\label{2_switch_bipartite}
\end{figure} 

\begin{defn}
    Given $G \in \Bs_{(\sbb,\tb)}(uv,M)$, we call an ordered $2$-tuple $(x,y) \in S \times T$ a \textit{forward 2-switching} on $G$ if the following conditions hold:
    \begin{enumerate}
        \item $xy$ is an edge in $G$ and $xy \neq uv$.
        \item $x \neq u$, $y \neq v$ and $uy,xv$ are not edges in $G \cup M$.
    \end{enumerate}
\end{defn}
Given a forward 2-switching $(x,y)$, we can delete the edges $uv,xy$ and add the edges $uy,xv$ to obtain a new bipartite graph $G' \in \Bs_{(\sbb,\tb)}(\emptygraph,M+uv)$. Let $f_2^B(G)$ denote the number of forward $2$-switchings from $G$ to $G'$.
\begin{claim}\label{forward_2_switch_lower_bipartite}
For all $G \in \Bs_{(\sbb,\tb)}(uv,M)$,
    \begin{equation*}
        f_2^B(G) \geq m(G) \cdot p(\sbb,\tb,\emptygraph,M,uv)
    \end{equation*}
\end{claim}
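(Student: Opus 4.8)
The plan is to mirror the inclusion--exclusion argument of Claim~\ref{forward_2_switch_lower}, exploiting the bipartition to eliminate several of the bad configurations that occur in the generic case. First I would count the ordered pairs $(x,y)\in S\times T$ satisfying Condition~1 exactly: $xy$ ranges over the $m(G)-1$ edges of $G$ other than $uv$, and in $K_{m,n}$ every edge has a unique endpoint in $S$ and a unique endpoint in $T$, so there are exactly $m(G)-1$ such pairs. (This is precisely where the factor $m(G)$ replaces the $2m(G)$ of the generic statement.) Then I would subtract an upper bound on the number of \emph{bad} pairs, those meeting Condition~1 but not Condition~2.

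Next I would observe that, given $xy\in G$ with $xy\neq uv$, Condition~2 fails exactly when $x=u$, $y=v$, $uy\in G\cup M$, or $xv\in G\cup M$; and since $x=u$ forces $uy=xy\in G$ and $y=v$ forces $xv=xy\in G$, the set of bad pairs is the union of the ``$u$-side'' ($uy\in G\cup M$) and the ``$v$-side'' ($xv\in G\cup M$). It therefore suffices to bound each side. For the $u$-side I would write the count as $\sum_{y:\,uy\in G\cup M}\#\{x\in S:xy\in G,\ xy\neq uv\}$; using that $G$ and $M$ are edge-disjoint and $uv\in G$, there is exactly one such $y$ equal to $v$ (contributing $t_v-1$) and exactly $s_u+m_u-1$ further values of $y$, each contributing $t_y$. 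Bounding $\sum_{y\neq v,\,uy\in G\cup M}t_y\le T^{\{v\}}(s_u+m_u-1)$ and invoking Lemma~\ref{deg_sum_prop}(i) with the removed vertex $v$ gives $t_v+T^{\{v\}}(s_u+m_u-1)\le T(s_u+m_u)$, so the $u$-side has size at most $T(s_u+m_u)-1$. A symmetric computation bounds the $v$-side by $S(t_v+n_v)-1$, where $(\mb,\nb)$ is the degree sequence of $M$.

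Combining the three counts,
\begin{equation*}
 f_2^B(G)\ \ge\ (m(G)-1)-\bigl(T(s_u+m_u)-1\bigr)-\bigl(S(t_v+n_v)-1\bigr)\ =\ m(G)-T(s_u+m_u)-S(t_v+n_v)+1,
\end{equation*}
which is at least $m(G)\cdot p(\sbb,\tb,\emptygraph,M,uv)$ by the definition of $p$. As in the generic case, the general-$H$ form stated in Lemma~\ref{ratio_bound_lemma_bipartite} then follows via the substitutions $(\sbb,\tb)\mapsto(\sbb,\tb)-(\hb,\ib)$ and $M\mapsto H\cup L$.

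I expect the only delicate point to be the bookkeeping of the ``$-1$'' corrections: isolating the degenerate cases $y=v$ (on the $u$-side) and $x=u$ (on the $v$-side), and checking that after removing them exactly $s_u+m_u-1$ (resp.\ $t_v+n_v-1$) summands remain, so that Lemma~\ref{deg_sum_prop}(i) applies cleanly. One should also note the boundary case $s_u+m_u=n$ (resp.\ $t_v+n_v=m$), where $T^{\{v\}}$ (resp.\ $S^{\{u\}}$) is already the full degree sum and the relevant inequality holds with equality rather than through Lemma~\ref{deg_sum_prop}(i). Everything else is a routine transcription of the generic argument, simplified by the bipartition removing the configurations $u=y$ and $v=x$ entirely.
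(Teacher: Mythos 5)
Your proposal is correct and takes essentially the same approach as the paper: its proof of Claim \ref{forward_2_switch_lower_bipartite} counts the $m(G)-1$ ordered pairs satisfying Condition 1 and subtracts at most $S(t_v+n_v)+T(s_u+m_u)-2$ bad pairs (via the same split into the $u$-side and $v$-side and the same use of Lemma \ref{deg_sum_prop}(i)), exactly as you do when transcribing the generic Claim \ref{forward_2_switch_lower}. Your additional remark on the boundary case $s_u+m_u=n$ is a harmless refinement and does not change the argument.
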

\begin{proof}
    The proof is similar to the generic case proof of Claim \ref{forward_2_switch_lower}. Let $(\mb,\nb)$ be the degree sequence of $M$. There are $m(G) - 1$ ordered pairs $(x,y)$ such that Condition 1 holds. Figure \ref{bad_forward_2_bipartite} shows the cases where Condition 1 holds but Condition 2 fails. There are at most $S(t_v + n_v) + T(s_u + m_u)-2$ such $2$-tuples. Inclusion-exclusion then implies the result.
    \begin{figure}[htbp]
\centering
    \begin{tabularx}{0.95\textwidth}{*{3}{>{\centering\arraybackslash}X}}
   \begin{tikzpicture}[
every edge/.style = {draw=black,very thick},
 vrtx/.style args = {#1/#2}{% 
      circle, draw, thick, fill=white,
      minimum size=4mm, label=#1:#2}
                    ]
\node(u) [vrtx= above/, label = above : {$u=x$}] at (0, 2) {};
\node(v) [vrtx= left/$v$,fill = black] at (0, 0) {};
\node(y) [vrtx= right/$y$,fill = black] at (2, 0) {};

\path   (u) edge (v)
        (u) edge   (y);        
        \end{tikzpicture} 
    \centerline{(A) : $u = x$}  
    &   
      \begin{tikzpicture}[
every edge/.style = {draw=black,very thick},
 vrtx/.style args = {#1/#2}{% 
      circle, draw, thick, fill=white,
      minimum size=4mm, label=#1:#2}
                    ]
\node(u) [vrtx= left/$u$] at (0, 2) {};
\node(v) [vrtx= left/$v$,fill = black] at (0, 0) {};
\node(y) [vrtx= right/$y$,fill = black] at (2, 0) {};
\node(x) [vrtx= right/$x$] at (2, 2) {};

\path   (u) edge (v)
        (x) edge (y)
        (u) edge[red]   (y);
\end{tikzpicture} 
\centerline{(B) : $uy \in G \cup M$\vrule width0pt depth3ex} \\

 \begin{tikzpicture}[
every edge/.style = {draw=black,very thick},
 vrtx/.style args = {#1/#2}{% 
      circle, draw, thick, fill=white,
      minimum size=4mm, label=#1:#2}
                    ]
\node(u) [vrtx= left/$u$] at (0, 2) {};
\node(v) [vrtx= left/, label = left : {$v=y$},fill = black] at (0, 0) {};
\node(y) [vrtx= right/$x$] at (2, 2) {};

\path   (u) edge (v)
        (v) edge   (y);
        \end{tikzpicture} 
    \centerline{(C) : $v = y$} 
        & 
        \begin{tikzpicture}[
every edge/.style = {draw=black,very thick},
 vrtx/.style args = {#1/#2}{% 
      circle, draw, thick, fill=white,
      minimum size=4mm, label=#1:#2}
                    ]
\node(u) [vrtx= left/$u$] at (0, 2) {};
\node(v) [vrtx= left/$v$,fill = black] at (0, 0) {};
\node(y) [vrtx= right/$y$,fill = black] at (2, 0) {};
\node(x) [vrtx= right/$x$] at (2, 2) {};
%\node(fill) [vrtx= above/, label = below : {\phantom{$v=y$}}] at (0,0){}; % This is phantom text to align the images

\path   (u) edge (v)
        (x) edge (y)
        (v) edge[red]   (x);
        \end{tikzpicture} 
    \centerline{(D) : $xv \in G \cup M$} 
    \end{tabularx}
\caption{Cases where Condition 2 is not satisfied}
\label{bad_forward_2_bipartite}
\end{figure} 
\end{proof}

\subsubsection*{Backward 2-Switchings}
\begin{defn}[Backward 2-Switching]
    Given $G' \in \Bs_{(\sbb,\tb)}(\emptygraph,M+uv)$, we call an ordered $2$-tuple $(x,y) \in S \times T$ a \textit{backward 2-switching} on $G'$ if the following conditions hold:
    \begin{enumerate}
        \setcounter{enumi}{2}
        \item $uy$ and $xv$ are edges in $G'$.
        \item $x \neq y$ and $xy$ is not an edge in $G'$ or $M$.
    \end{enumerate}
\end{defn}
Given a backward 2-switching on $G'$, we can delete the edges $uy$ and $xv$ and add the edges $uv$ and $xy$ to obtain a new bipartite graph $G \in \Bs_{(\sbb,\tb)}(uv,M)$. Let $b_2^B(G')$ denote the number of backward 2-switchings from $G'$. We now upper bound this quantity.
\begin{claim}\label{bipartite_2_backward_upper}
    For all $G' \in \Bs_{(\sbb,\tb)}(\emptygraph,M+uv)$,
    \begin{equation*}
        b_2^B(G') \leq s_u t_v.
    \end{equation*}
\end{claim}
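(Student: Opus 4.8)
The plan is to mimic the proof of Claim~\ref{backward_2_switch_upper} from the generic case, which is a one-line counting argument: simply bound the number of backward $2$-switchings by the number of ordered pairs satisfying Condition~3 alone, discarding Condition~4 (which can only reduce the count).

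Concretely, fix $G' \in \Bs_{(\sbb,\tb)}(\emptygraph,M+uv)$. Recall that in the bipartite setup $u \in S$ and $v \in T$, and a backward $2$-switching is an ordered pair $(x,y) \in S \times T$ with $uy, xv \in G'$ (Condition~3) together with the non-adjacency requirement of Condition~4. The degree of $u$ in $G'$ is $s_u$ (the degree sequence of $G'$ is $(\sbb,\tb)$), so there are exactly $s_u$ vertices $y \in T$ with $uy \in G'$; likewise there are exactly $t_v$ vertices $x \in S$ with $xv \in G'$. Hence the number of pairs $(x,y)$ satisfying Condition~3 is exactly $s_u t_v$, and since every backward $2$-switching must satisfy Condition~3, we get $b_2^B(G') \le s_u t_v$.

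There is no real obstacle here; the only thing to be careful about is that we are counting ordered pairs from $S \times T$ (the bipartition forces $x$ to pair with $v$ and $y$ with $u$, unlike the generic case where one must also worry about the orientation of the two deleted edges), so no factor of $2$ or additional casework arises. As with the generic Claim~\ref{backward_2_switch_upper}, we do not need Condition~4 for the upper bound; it is only needed to guarantee the resulting graph lies in $\Bs_{(\sbb,\tb)}(uv,M)$, which is irrelevant for bounding $b_2^B(G')$ from above.

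Combining this with Claim~\ref{forward_2_switch_lower_bipartite} via the standard double-counting identity $\sum_{G} f_2^B(G) = \sum_{G'} b_2^B(G')$ will then yield Part~(A) of Lemma~\ref{ratio_bound_lemma_bipartite} after the substitutions $(\sbb,\tb) \mapsto (\sbb,\tb)-(\hb,\ib)$ and $M \mapsto H \cup L$, exactly as in the generic case.
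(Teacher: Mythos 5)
Your proposal is correct and is essentially identical to the paper's proof: the paper also bounds $b_2^B(G')$ by the exact count of ordered pairs $(x,y)\in S\times T$ satisfying Condition~3 alone, namely $s_u$ choices of $y$ with $uy\in G'$ times $t_v$ choices of $x$ with $xv\in G'$, discarding Condition~4. Nothing further is needed.
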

\begin{proof}
    There are exactly $s_ut_v$ ordered pairs $(x,y)$ satisfying Condition 3.
\end{proof}

Using the same counting argument as in the generic case, let $T$ denote the number of pairs $(G,G')$, where $G \in \Bs_{(\sbb,\tb)}(uv,M)$ and $G' \in \Bs_{(\sbb,\tb)}(\emptygraph,M+uv)$ and there exists a forward switching from $G$ to $G'$. Since this implies there exists a backward switching from $G'$ to $G$, we obtain
\begin{equation*}
 \sum_{G \in \Bs_{(\sbb,\tb)}(uv,M)} f_2^B(G) = T = \sum_{G' \in \Bs_{(\sbb,\tb)}(\emptygraph,M+uv)} b_2^B(G'). 
\end{equation*}
Claim \ref{forward_2_switch_lower_bipartite} and Claim \ref{bipartite_2_backward_upper} then give
\begin{equation*}
    m(G) \cdot p(\sbb,\tb,\emptygraph,M,uv) \cdot |\Bs_{(\sbb,\tb)}(uv,M)| \leq s_u t_v \cdot |\Bs_{(\sbb,\tb)}(\emptygraph,M+uv)|.
\end{equation*}
The assumption $p(\sbb,\tb,\emptygraph,M,uv) > 0$ allows one to rearrange the above. The substitution of $(\sbb,\tb) \mapsto (\sbb,\tb) - (\hb,\ib)$ and $M \mapsto H \cup L$ gives the result.
\subsection*{Part B Proof: 3-Switchings}
\subsubsection*{Forward 3-Switchings}
\begin{figure}[htbp]
\centering
   \begin{tikzpicture}[scale=0.9,
every edge/.style = {draw=black,very thick},
 vrtx/.style args = {#1/#2}{% 
      circle, draw, thick, fill=white,
      minimum size=4mm, label=#1:#2}
                    ]
\node(u) [vrtx= left/$u$] at (-8, 2) {};
\node(v) [vrtx= left/$v$,fill = black] at (-8, 0) {};
\node(w) [vrtx= above/$y$] at (-6, 2) {};
\node(c) [vrtx= below/$x$,fill = black] at (-6, 0) {};
\node(a) [vrtx= right/$a$] at (-4,2) {};
\node(b) [vrtx= right/$b$,fill = black] at (-4,0) {};

\path   (u) edge (v)
        (u) edge[dashed]   (c)
        (c) edge (a)
        (b) edge (w)
        (v) edge[dashed]   (w)
        (a) edge[dashed]   (b);

\node(u') [vrtx= left/$u$] at (0+2, 2) {};
\node(v') [vrtx= left/$v$, fill = black] at (0+2, 0) {};
\node(w') [vrtx= above/$y$] at (2+2, 2) {};
\node(c') [vrtx= below/$x$, fill = black] at (2+2, 0) {};
\node(a') [vrtx= right/$a$] at (4+2,2) {};
\node(b') [vrtx= right/$b$, fill = black] at (4+2,0) {};

\path   (u') edge[dashed]  (v')
        (u') edge  (c')
        (c') edge[dashed]  (a')
        (b') edge[dashed]  (w')
        (v') edge  (w')
        (a') edge  (b');

\node(forward) at (-1,0.3) (<name>) {Forward 3-Switching};
\draw[->, above, thick] (-2.5,0) -- (0.5,0);

\node(back) at (-1,0.3+2) (<name>) {Backward 3-Switching};
\draw[<-, above, thick] (-2.5,0+2) -- (0.5,0+2);

\end{tikzpicture} 
\caption{A switching between $\Bs_{(\sbb,\tb)}(uv,M)$ and $\Bs_{(\sbb,\tb)}(\emptygraph,M+uv)$}
\label{3_switch_bipartite}
\end{figure} 
Please refer to Figure \ref{3_switch_bipartite}.
\begin{defn}[Forward 3-Switching]
    Given $G \in \Bs_{(\sbb,\tb)}(uv,M)$, we call an ordered $4$-tuple $(y,a,x,b) \in S^2 \times T^2$ a \textit{forward 3-switching} on $G$ if the following conditions hold:
\begin{enumerate}
    \item $uv,ax,yb$ are edges in $G$ (not necessarily distinct).
    \item The vertices $u,v,a,x,y,b$ are all distinct.
    \item $ux,yv,ab$ are not edges in $G$ or $M$.
\end{enumerate}
\end{defn}
If we can find a forward 3-switching on $G$, then by deleting the edges $uv,ax$ and $yb$ and adding the edges $ux,yb$ and $ab$ we obtain a graph $G' \in \Bs_{(\sbb,\tb)}(\emptygraph,M+uv)$. Condition~2 ensures $uv,ux,yv,yb,ax,ab$ are edges or non-edges in $G$ and they are distinct. Condition~3 ensures $ux,yv$ and $ab$ can be added in the switching. Let $f_3^B(G)$ denote the number of forward 3-switchings from $G$.
\begin{claim}\label{forward_3_switch_upper_bipartite}
    For all $G \in \Bs_{(\sbb,\tb)}(uv,M)$,
    \begin{equation*}
        f_3^B(G) \leq m(G)^2.
    \end{equation*}
\end{claim}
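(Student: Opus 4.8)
The plan is to prove the bound by counting, with Conditions 2 and 3 temporarily discarded, the number of $4$-tuples $(y,a,x,b)\in S^2\times T^2$ satisfying Condition 1 alone; since reinstating the omitted conditions only removes tuples, this count is an upper bound for $f_3^B(G)$. So the whole argument reduces to an exact enumeration under Condition 1.

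First I would observe that, because $G\in\Bs_{(\sbb,\tb)}(uv,M)$ already contains $uv$ as an edge, the clause ``$uv$ is an edge in $G$'' in Condition 1 is automatically satisfied and places no restriction on the tuple. What remains is to choose $(a,x)\in S\times T$ with $ax\in G$ and $(y,b)\in S\times T$ with $yb\in G$. This is exactly the place where the bipartite structure improves on the generic Claim \ref{forward_3_switch_upper}: an edge of a bipartite graph has a well-defined $S$-endpoint and a well-defined $T$-endpoint, so each edge of $G$ corresponds to a unique ordered pair $(a,x)$ with $a\in S$ and $x\in T$, with no factor of $2$. Hence there are exactly $m(G)$ admissible choices of $(a,x)$ and, identically, exactly $m(G)$ admissible choices of $(y,b)$.

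Multiplying the two independent counts gives $m(G)\cdot m(G)=m(G)^2$ tuples satisfying Condition 1, so $f_3^B(G)\le m(G)^2$, which is the claim (and a factor of $4$ smaller than the generic bound, as expected from the loss of the two orientation choices). I do not foresee any genuine obstacle; the one point requiring care is the bookkeeping of which coordinates of the tuple $(y,a,x,b)$ lie in $S$ and which in $T$, so that the two ``edge choices'' are correctly identified as $(a,x)$ and $(y,b)$ and not some other pairing of the coordinates.
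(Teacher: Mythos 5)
Your proposal is correct and matches the paper's argument: the paper's (one-line) proof likewise bounds $f_3^B(G)$ by the exact count of $4$-tuples satisfying Condition 1, namely $m(G)$ choices of the edge $ax$ and $m(G)$ choices of the edge $yb$, with the bipartition removing the factor of $2$ per edge present in the generic Claim \ref{forward_3_switch_upper}. Your additional bookkeeping about which coordinates lie in $S$ and $T$ is accurate but adds nothing beyond the paper's reasoning.
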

\begin{proof}
    The number of $4$-tuples satisfying Condition 1 is at most $m(G)^2$.
\end{proof}

\subsubsection*{Backward 3-Switchings}

\begin{defn}[Backward 3-Switching]
    Given $G' \in \Bs_{(\sbb,\tb)}(\emptygraph, M + uv)$, we call an ordered $4$-tuple $(y,a,x,b) \in S^2 \times T^2$ a \textit{backward 3-switching} on $G'$ if the following conditions hold:
    \begin{enumerate}
    \setcounter{enumi}{3}
    \item $ux,yv,ab$ are edges in $G'$ (not necessarily different).
    \item $a \neq u$ and $b \neq v$.
    \item $x \neq b $ and $ax$ is not an edge in $G'$ or $M$.
    \item $y \neq a$ and $yb$ is not an edge in $G'$ or $M$.
\end{enumerate}
\end{defn}
Given a backward 3-switching on $G'$, we can delete the edges $ux,yv$ and $ab$ and add the edges $uv,yb$ and $ax$ to obtain a graph $G \in \Bs_{(\sbb,\tb)}(uv,M)$. Condition 5 ensures $u$ and $v$ are distinct from $a$ and $b$, which along with Condition 4 implies the edges $ux$, $yv$ and $ab$ are different. Conditions 6 and 7 ensure the edges $ax$ and $yb$ can be added to $G'$. Let $b_3^B(G')$ denote the number of backward switchings from $G'$. 

\begin{claim}\label{back_3_switch_bipartite_lem}
    For all $G' \in \Bs_{(\sbb,\tb)}(\emptygraph,M+uv)$,
    \begin{equation*}
         b_3^B(G') \leq m(G) s_u t_v
    \end{equation*}
    and
    \begin{equation*}
        b_3^B(G') \geq m(G)s_u t_v  \cdot q(\sbb,\tb,\emptygraph,M,uv).
    \end{equation*}
\end{claim}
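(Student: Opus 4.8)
The plan is to follow the proof of Claim~\ref{backward_switch_3_lower_general} from the generic case, adapting the combinatorial counts to the bipartition. The bipartite setting is in fact slightly cleaner: since $a,y\in S$ while $b,x\in T$, the degenerate possibilities $x=a$, $a=v$, $b=u$ (and the overlap $a=u$, $b=v$, which would require the forbidden edge $uv\in G'$) are automatically excluded. For the upper bound I count the ordered $4$-tuples $(y,a,x,b)\in S^2\times T^2$ satisfying Condition~4 alone. Since $ux\in G'$ forces $x$ to be one of the $s_u$ neighbours of $u$, $yv\in G'$ forces $y$ to be one of the $t_v$ neighbours of $v$, and $ab$ ranges over the $m(G)$ edges of $G'$ (each with a unique orientation into $S\times T$), there are exactly $m(G)\,s_u t_v$ such tuples; as Condition~4 is necessary for any backward $3$-switching this gives $b_3^B(G')\le m(G)s_ut_v$.

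For the lower bound I use inclusion--exclusion, starting from the exact count $m(G)s_ut_v$ of Condition~4 tuples and subtracting upper bounds on the tuples that in addition violate Condition~5 (``terrible''), Condition~6, or Condition~7 (``bad''), so that the three bounds together cover all discarded tuples. Violating Condition~5 means $a=u$ or $b=v$; these cannot hold simultaneously, and counting each directly gives exactly $s_u^2 t_v + s_u t_v^2$ terrible tuples. Given Condition~4, violating Condition~6 forces $ax\in G'\cup M$ (the sub-case $x=b$ is subsumed, since then $ax=ab\in G'$); fixing a neighbour $y$ of $v$, then a neighbour $x$ of $u$, then $a\neq u$ with $ax\in G'\cup M$ (at most $t_x+n_x-1$ choices, where $(\mb,\nb)$ is the degree sequence of $M$), then $b$ with $ab\in G'$ (at most $s_a$ choices), bounds the count of such tuples by $t_v\sum_{x:\,ux\in G'}\sum_{a\neq u,\,ax\in G'\cup M}s_a \le t_v\sum_{x:\,ux\in G'}S^{\{u\}}(t_x+n_x-1)$. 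The symmetric estimate handles Condition~7, with $S^{\{u\}}$ and $(t_x,n_x)$ replaced by $T^{\{v\}}$ and $(s_y,m_y)$.

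I then combine these. Distributing the terrible count $s_u^2 t_v$ into the $s_u$-term sum over $x$ and $s_u t_v^2$ into the $t_v$-term sum over $y$, and applying Lemma~\ref{deg_sum_prop}(i) once to each (so that $s_u + S^{\{u\}}(t_x+n_x-1)\le S(t_x+n_x)$, and symmetrically for $T$), the total number of discarded tuples is at most $t_v\sum_{x:\,ux\in G'}S(t_x+n_x) + s_u\sum_{y:\,yv\in G'}T(s_y+m_y)$. Note that, unlike the generic case, no ``$+2$'' appears inside the degree-sum functions: the second excluded vertex of Condition~5 lies in the opposite part and so was never excluded from the relevant function in the first place. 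Finally, by Lemma~\ref{deg_sum_prop}(ii) and Jensen's inequality, $\tfrac{1}{s_u}\sum_{x:\,ux\in G'}S(t_x+n_x)\le S\bigl(\tfrac{1}{s_u}\sum_{x:\,ux\in G'}(t_x+n_x)\bigr)\le S(\alpha_u(\tb,\nb,\sbb,\zerob))$, since the average of $t_x+n_x$ over the $s_u$ neighbours of $u$ is at most the average of the $s_u$ largest entries of $\tb+\nb$; the other sum is bounded symmetrically by $t_v\,T(\alpha_v(\sbb,\mb,\tb,\zerob))$. Subtracting from $m(G)s_ut_v$ yields exactly $m(G)s_ut_v\cdot q(\sbb,\tb,\emptygraph,M,uv)$, as required.

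The main obstacle is the bookkeeping in the inclusion--exclusion: identifying which single vertex to exclude from each degree-sum function ($u$ from the $S$-sum, $v$ from the $T$-sum), checking that the terrible-tuple count is exactly what is needed to absorb the negative terms produced by Lemma~\ref{deg_sum_prop}(i), and the routine but easy-to-mis-state verification that the degree sequence of $M$ enters consistently as $(\mb,\nb)$ so that $n_x$ is attached to $x\in T$ and $m_y$ to $y\in S$, matching the definition of~$q$. Everything else (the forward count, the upper bound, the final substitution $(\sbb,\tb)\mapsto(\sbb,\tb)-(\hb,\ib)$, $M\mapsto H\cup L$ deferred to the wrap-up of Lemma~\ref{ratio_bound_lemma_bipartite}) is a direct transcription of the generic argument.
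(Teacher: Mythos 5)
Your proof is correct and takes essentially the same route as the paper's: the same exact count $m(G)s_ut_v$ of Condition~4 tuples, the same terrible/bad inclusion--exclusion with the counts $s_u^2t_v+s_ut_v^2$, $t_v\sum_{x:\,ux\in G'}S^{\{u\}}(t_x+n_x-1)$ and $s_u\sum_{y:\,yv\in G'}T^{\{v\}}(s_y+m_y-1)$, and the same absorption of the terrible terms via Lemma~\ref{deg_sum_prop}(i) followed by concavity/Jensen to reach $s_ut_v\bigl(S(\alpha_u(\tb,\nb,\sbb,\zerob))+T(\alpha_v(\sbb,\mb,\tb,\zerob))\bigr)$ and hence the stated factorisation through $q$. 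Your observation that the bipartition eliminates the ``$+2$'' of the generic case is consistent with the paper's definition of $q$ and its proof.
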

\begin{proof}
    \textbf{Upper Bound.}
    For the upper bound, we count exactly the number of $(y,a,x,b)$ satisfying Condition 4. We can choose $x$ such that $ux \in G'$ in exactly $s_u$ ways. For each choice of $x$, there are exactly $t_v$ choices of $y$ with $yv \in G'$. For each choice of $x$ and $y$, we can choose $a$ and $b$ satisfying Condition 4 in exactly $m(G)$ ways. This gives the result.
    \textbf{Lower Bound.}
    For the lower bound we use inclusion-exclusion. We call a $4$-tuple \textit{terrible} if it satisfies Condition $4$ but not Condition 5. We call a $4$-tuple \textit{bad} if it satisfies Conditions 4 and 5 but not Condition 6 or 7. The lower bound on $b_3^B(G')$ arises by starting with the number of $4$-tuples satisfying Condition 4 and subtracting an exact count on the number of terrible $4$-tuples and an upper bound on the number of bad $4$-tuples.
     \par\textbf{Terrible Cases: Exact Count.} Please refer to Figure \ref{terrible_cases_bipartite}, which displays the terrible cases not satisfying Condition $5$. There are exactly $s_u t_v$ choices for $x$ and $y$. For each of these choices we are in case (A) or (B). For (A) we set $u = a$ and have $s_u$ choices for $b$, where we permit $b = x$. For (B) we set $v = b$ and have $t_v$ choices for $a$, where we permit $y = a$. Therefore, there are $s_u t_v (s_u + t_v)$ terrible $4$-tuples.
   \begin{figure}[htbp]
\centering
    \begin{tabularx}{0.95\textwidth}{*{2}{>{\centering\arraybackslash}X}}
    \begin{tikzpicture}[
every edge/.style = {draw=black,very thick},
 vrtx/.style args = {#1/#2}{% 
      circle, draw, thick, fill=white,
      minimum size=4mm, label=#1:#2}
                    ]
\node(u) [vrtx= above/, label = above : {$u=a$}] at (-1.29904, 0.75) {};
\node(v) [vrtx= left/$v$,fill = black] at (-1.29904, -0.75) {};
\node(y) [vrtx= above/$y$] at (0, 0.75) {};
\node(x) [vrtx= below/$x$,fill = black] at (0, -0.75) {};
%\node(a) [vrtx= right/$a$] at (1.29904, 0.75) {};
\node(b) [vrtx= right/$b$,fill = black] at (1.29904, -0.75) {};

%[label=right: {$1*1=1$}]

\path   (u) edge[dashed] (v)
        (u) edge   (x)
      %  (x) edge[red] (a)
        (v) edge  (y)
        (u) edge (b);
\end{tikzpicture} 
\centerline{(A) : $u = a$} 
& 
 \begin{tikzpicture}[
every edge/.style = {draw=black,very thick},
 vrtx/.style args = {#1/#2}{% 
      circle, draw, thick, fill=white,
      minimum size=4mm, label=#1:#2}
                    ]
\node(u) [vrtx= left/$u$] at (-1.29904, 0.75) {};
\node(v) [vrtx= above/, label = below : {$v =b$},fill = black] at (-1.29904, -0.75) {};
\node(y) [vrtx= above/$y$] at (0, 0.75) {};
\node(x) [vrtx= above/, label = below : {$x$},fill = black] at (0, -0.75) {};
\node(a) [vrtx= right/$a$] at (1.29904, 0.75) {};

\path   (u) edge[dashed] (v)
        (u) edge  (x)
        (v) edge  (a)
        (v) edge  (y);
        \end{tikzpicture} 
    \centerline{(B) : $v = b$} 
    \end{tabularx}
\caption{Terrible Cases where Condition 4 holds but not Condition 5}
\label{terrible_cases_bipartite}
\end{figure} 
    \par \textbf{Bad Cases: Upper Bound.}
    Let $(y,a,x,b)$ satisfy Conditions 4 and 5 but not Condition 6. We will upper bound the number of such $4$-tuples. Either (A) $x = b$ or (B) $ax \in G' \cup M$. Both are illustrated in Figure \ref{bad_con_6_bipartite}. Recall $(\mb,\nb)$ is the degree sequence of $M$.

    \begin{figure}[htbp]
\centering
\captionsetup{justification=centering} 
    \begin{tabularx}{0.95\textwidth}{*{2}{>{\centering\arraybackslash}X}}
 \begin{tikzpicture}[
every edge/.style = {draw=black,very thick},
 vrtx/.style args = {#1/#2}{% 
      circle, draw, thick, fill=white,
      minimum size=4mm, label=#1:#2}
                    ]
\node(u) [vrtx= left/$u$] at (-1.29904, 0.75) {};
\node(v) [vrtx= left/$v$,fill = black] at (-1.29904, -0.75) {};
\node(y) [vrtx= above/$y$] at (0, 0.75) {};
\node(x) [vrtx= above/, label = below : {$x=b$},fill = black] at (0, -0.75) {};
\node(a) [vrtx= right/$a$] at (1.29904, 0.75) {};

\path   (u) edge[dashed] (v)
        (u) edge  (x)
        (x) edge  (a)
        (v) edge  (y);
        \end{tikzpicture} 
    \centerline{(A) : $x = b$} 

& 
\begin{tikzpicture}[
every edge/.style = {draw=black,very thick},
 vrtx/.style args = {#1/#2}{% 
      circle, draw, thick, fill=white,
      minimum size=4mm, label=#1:#2}
                    ]
\node(u) [vrtx= left/$u$] at (-1.29904, 0.75) {};
\node(v) [vrtx= left/$v$,fill = black] at (-1.29904, -0.75) {};
\node(y) [vrtx= above/$y$] at (0, 0.75) {};
\node(x) [vrtx= below/$x$,fill = black] at (0, -0.75) {};
\node(a) [vrtx= right/$a$] at (1.29904, 0.75) {};
\node(b) [vrtx= right/$b$,fill = black] at (1.29904, -0.75) {};

%[label=right: {$1*1=1$}]

\path   (u) edge[dashed] (v)
        (u) edge   (x)
        (x) edge[red] (a)
        (v) edge  (y)
        (a) edge (b);
\end{tikzpicture} 
\centerline{(B) : $ax \in G' \cup M$}
    \end{tabularx}
\caption{Bad cases where Conditions 4 and 5 hold but not Condition~6}
\label{bad_con_6_bipartite}
\end{figure}

    We bound cases (A) and (B) simultaneously. There are $t_v$ choices for $y$. For each fixed~$y$, the number of $(y,a,x,b)$ satisfying Conditions 4 and 5 but not satisfying Condition 6 is given by
    \begin{equation*}
        \sum_{\substack{x \in T \\ ux \in G'}}  \sum_{\substack{a \in S \\ ax \in G' \cup M \\ a \neq u}} s_a .
    \end{equation*}
    This count arises as follows. Fix $x \in T$ such that $ux \in G'$. We count the number of directed 2 paths $x \to a \to b$ such that $ax \in G' \cup M$ and $ab \in G'$, where $a \neq u$. We allow $x = b$ when counting these directed 2-paths, so cases (A) and (B) are counted simultaneously. After fixing $x$, the number of such 2-paths is
    $$\sum_{\substack{a \in S \\ ax \in G' \cup M \\ a \neq u}} s_a,$$
    since for each choice of $a$ with $ax \in G'\cup M$ and $a \neq x$, there are $s_a$ choices for $b$. There are $t_x + n_x - 1$ terms in the above summation, and at most they consist of the $t_x + n_x -1$ largest degrees in $G'$, where none of these degrees are the term $s_u$. Therefore, 
    \begin{equation*}
        \sum_{\substack{x \in T \\ ux \in G'}}  \sum_{\substack{a \in S \\ ax \in G' \cup M \\ a \neq u}} s_a \leq \sum_{\substack{x \in T \\ ux \in G'}} S^{\{u\}}(t_x+n_x - 1) .
    \end{equation*}
    Since this is for a fixed $y$, the number of $4$-tuples satisfying Conditions 4 and 5 but not Condition 6 is
    \begin{equation*}
        t_v \sum_{\substack{x \in T \\ ux \in G'}} S^{\{u\}}(t_x+n_x - 1).
    \end{equation*}
    \begin{figure}[htbp]
\centering
    \begin{tabularx}{0.95\textwidth}{*{2}{>{\centering\arraybackslash}X}}
 \begin{tikzpicture}[
every edge/.style = {draw=black,very thick},
 vrtx/.style args = {#1/#2}{% 
      circle, draw, thick, fill=white,
      minimum size=4mm, label=#1:#2}
                    ]
\node(u) [vrtx= left/$u$] at (-1.29904, 0.75) {};
\node(v) [vrtx= left/$v$,fill = black] at (-1.29904, -0.75) {};
\node(y) [vrtx= above/, label = above : {$y=a$}] at (0, 0.75) {};
\node(x) [vrtx= above/, label = below : {$x$},fill = black] at (0, -0.75) {};
\node(b) [vrtx= right/$b$,fill = black] at (1.29904, -0.75) {};

\path   (u) edge[dashed] (v)
        (u) edge  (x)
        (y) edge  (b)
        (v) edge  (y);
        \end{tikzpicture} 
    \centerline{(A) : $y = a$}  
& 
\begin{tikzpicture}[
every edge/.style = {draw=black,very thick},
 vrtx/.style args = {#1/#2}{% 
      circle, draw, thick, fill=white,
      minimum size=4mm, label=#1:#2}
                    ]
\node(u) [vrtx= left/$u$] at (-1.29904, 0.75) {};
\node(v) [vrtx= left/$v$,fill = black] at (-1.29904, -0.75) {};
\node(y) [vrtx= above/$y$] at (0, 0.75) {};
\node(x) [vrtx= below/$x$,fill = black] at (0, -0.75) {};
\node(a) [vrtx= right/$a$] at (1.29904, 0.75) {};
\node(b) [vrtx= right/$b$,fill = black] at (1.29904, -0.75) {};

%[label=right: {$1*1=1$}]

\path   (u) edge[dashed] (v)
        (u) edge   (x)
        (y) edge[red] (b)
        (v) edge  (y)
        (a) edge (b);
\end{tikzpicture} 
\centerline{(B) : $yb \in G' \cup M$} 
    \end{tabularx}
\caption{Bad cases where Conditions 4 and 5 hold but not Condition~7}
\label{fig:condition7_bad_bipartite}
\end{figure}

    Now we upper bound the number of bad $4$-tuples satisfying Conditions 4 and 5 but not satisfying Condition 7. Figure \ref{fig:condition7_bad_bipartite} displays these cases. By a  similar count, the number of these bad $4$-tuples is at most 
    \begin{equation*}
        s_u \sum_{\substack{y \in S \\ yv \in G'}} T^{\{v\}}(s_y+m_y - 1).
    \end{equation*}
    Therefore, the total number of bad and terrible $4$-tuples is at most
\begin{equation}\label{bipartite:bad_and_terrible}
     s_u^2 t_v +t_v \sum_{\substack{x \in T \\ ux \in G'}} S^{\{u\}}(t_x+n_x - 1)
     + t_v^2 s_u + s_u \sum_{\substack{y \in S \\ yv \in G'}} T^{\{v\}}(s_y+m_y - 1).
\end{equation}

Let us consider the first two terms (the reasoning to simplify the last two terms is similar). The first summation has $s_u$ terms, then we have
\begin{align*}
    s_u^2 t_v +t_v \sum_{\substack{x \in T \\ ux \in G'}} S_H^{\{u\}}(t_x+n_x - 1) & = t_v \Biggl( \sum_{\substack{x \in T \\ ux \in G'}} s_u +  S^{\{u\}}(t_x+n_x - 1)\Biggr) \\
    & \leq t_v \Biggl( \sum_{\substack{x \in T \\ ux \in G'}}  S(t_x+n_x )\Biggr) & \text{Lemma \ref{deg_sum_prop} (i)}\\
    & \leq t_v s_u S(\alpha_{u}(\tb,\nb,\sbb,\zerob)).
\end{align*}
The last line follows by Jensen's inequality and Lemma \ref{deg_sum_prop} (ii). Then \eqref{bipartite:bad_and_terrible} is at most $s_u t_v \left(S(\alpha_{u}(\tb,\nb,\sbb,\zerob)) + T(\alpha_{v}(\sbb,\mb,\tb,\zerob))\right) $.
Therefore,
\begin{equation*}
    b_3^B(G') \geq s_u t_v m(G)  -s_u t_v \left(S(\alpha_{u}(\tb,\nb,\sbb,\zerob)) + T(\alpha_{v}(\sbb,\mb,\tb,\zerob))\right),
\end{equation*}
which factors to the result.
\end{proof}

Let $T'$ denote the number of pairs $(G,G')$, where $G \in \Bs_{(\sbb,\tb)}(uv,M)$ and $G' \in \Bs_{(\sbb,\tb)}(\emptygraph,M+uv)$ and there exists a backward 3-switching from $G'$ to $G$. Observe that this implies there is also a forward 3-switching from $G$ to $G'$ and we have the equality
\begin{equation*}
 \sum_{G' \in \Bs_{(\sbb,\tb)}(\emptygraph,M+uv)} b_3^B(G') = T' = \sum_{G \in \Bs_{(\sbb,\tb)}(uv,M)} f_3^B(G) . 
\end{equation*}
Claim \ref{back_3_switch_bipartite_lem} and Claim \ref{forward_3_switch_upper_bipartite} then give
\begin{equation*}
    |\Bs_{(\sbb,\tb)}(\emptygraph,M + uv)| \cdot m(G) s_u t_v \cdot q(\sbb,\tb,\emptygraph,M,uv)  \leq |\Bs_{(\sbb,\tb)}(uv,M)| \cdot m(G)^2 . 
\end{equation*}
Observe that the substitutions $(\sbb,\tb) \mapsto (\sbb,\tb) - (\hb,\ib)$ and $M \mapsto H \cup L$ give the result as shown in Lemma \ref{ratio_bound_lemma_bipartite}. Indeed under the substitutions, $m(G) \mapsto m(G) - m(H)$ and $q(\sbb,\tb,\emptygraph,M,uv) \mapsto q(\sbb,\tb,H,L,uv)$.

\subsection{Proof of Corollaries \ref{multiple_edge_cor_bipartite} and \ref{multiple_forbidden_cor_bipartite}}
We will sketch the proofs in the bipartite case. They are almost identical to the generic case, however one keeps track of the functions $x \mapsto S(x)$ and $x \mapsto T(x)$ rather than the function $x \mapsto D(x)$.
\begin{proof}[Proof of Corollary \ref{multiple_edge_cor_bipartite}]
Consider the setup of Theorem \ref{multiple_edges_bipartite}. For each $1 \leq i \leq m(X)$, choose $u_i \in S$ and $v_i \in T$ such that  $e_i = u_iv_i$. We now bound the product of the functions $p$ and $q$ in Theorem \ref{multiple_edges_bipartite}.
    
    \textbf{Part A: Upper Bound.} For $k$ large using Lemma \ref{deg_sum_prop} (ii), we have
    \begin{align*}
    & \frac{S_{X_{i-1}}(t_{v_i} + m_{v_i}) + T_{X_{i-1}}(s_{u_i} + l_{u_i})}{m(G) - m(X_{i-1})} \\
    & \hspace{1cm} \leq \frac{S(\varDelta_{\partial{X}}(\tb + \mb)) + T(\varDelta_{\partial{X}}(\sbb + \lb))}{m(G) - m(X) } \leq \rho(k).    
\end{align*}
Theorem \ref{multiple_edges_bipartite}(A) then applies. Letting $C(k) \coloneq \frac{\ln(1-\rho(k))}{\rho(k)}$ and using $1-x \geq e^{C(k)x}$ for $x \in [0,\rho(k)]$ gives
\begin{align*}
    \prod_{i=1}^{m(X)}  p(\sbb,\tb, X_{i-1}, L, e_i) & \geq \exp\biggl( C(k) \cdot \sum_{i=1}^{m(X)} \frac{S_{X_{i-1}}(t_{v_i} + m_{v_i}) + T_{X_{i-1}}(s_{u_i} + l_{u_i}) }{m(G) - m(X_{i-1})}\biggr) \\
    & \geq \exp \cdot \left(C(k) \cdot \frac{m(X)(S(\kappa_T)+T(\kappa_S))}{m(G)-m(X)}\right) \\
    & \geq \exp \cdot \left(C(k) \cdot \frac{m(X)(S(\varDelta_{\partial{X}}(\tb + \mb))+T(\varDelta_{\partial{X}}(\sbb + \lb))) }{m(G)-m(X)}\right), 
\end{align*}
where $\kappa_S \coloneq \frac{\sum_{i\in S} x_{i}(s_i + l_i)}{m(X)}  $ and  $\kappa_T \coloneq  \frac{\sum_{i \in T} y_{i}(t_i + m_{i})}{m(X)}$. The second last equality follows by Lemma \ref{deg_sum_prop} (iii), observing that $\sum_{i=1}^{m(X)}S(t_{v_i} + m_{v_i}) = \sum_{i \in T} y_{i} S(t_i+m_{i})$, $\sum_{i=1}^{m(X)} T(s_{u_i} + l_{u_i}) = \sum_{i \in S} x_{i} T(s_i+l_{i})$, and then applying Jensen's inequality. The result now follows since $\rho(k)$ is bounded away from $1$.

\textbf{Part B: Lower Bound.}
Under the assumptions the result of Theorem \ref{multiple_edges_bipartite}(B) applies. Letting $C(k) \coloneq \frac{\ln(1-\rho(k))}{\rho(k)}$ and using $1-x \geq e^{C(k)x}$ for $x \in [0,\rho(k)]$ gives
\begin{align*}
    \prod_{i=1}^{m(X)} &q(\sbb,\tb, X_{i-1}, L, e_i) \\[-2ex]
    &{\quad}\geq \exp\biggl(C(k) \cdot \sum_{i=1}^{m(X)}  \frac{S_{X_{i-1}}(\alpha_{u_i}(\tb,\mb,\sbb,\xb_{i-1})) + T_{X_{i-1}}(\alpha_{v_i}(\sbb,\lb,\tb,\yb_{i-1}))}{m(G) - m(X_{i-1})} \biggr) \\
    & {\quad}\geq \exp\left(C(k) \cdot \frac{m(X)\left(S(\mu_S)  + T(\mu_T) \right)}{m(G) - m(X)}\right) \\
    & {\quad}\geq \exp\left(C(k) \cdot \frac{m(X)\left(S(\gamma(\tb,\mb,\sbb,\yb))  + T(\gamma(\sbb,\lb,\tb,\xb)) \right)}{m(G) - m(X)}\right),
\end{align*}
where $\mu_S \coloneq  \frac{\sum_{i \in S} x_{i} \alpha_{i}(\tb,\mb,\sbb,\xb)}{m(X)}  $ and $\mu_T \coloneq  \frac{\sum_{i \in T} y_{i} \alpha_{i}(\sbb,\lb,\tb,\yb)}{m(X)}$. The second last inequality follows by Lemma \ref{deg_sum_prop} (iii), observing that $\sum_{i=1}^{m(X)} S(\alpha_{u_i}(\tb,\mb,\sbb,\xb)) = \sum_{i \in S} x_{i} S(\alpha_{i}(\tb,\mb,\sbb,\xb)) $ (a similar equality holds for $T$), and then applying Jensen's inequality. The result now follows as $\rho(k)$ is bounded away from $1$ and $\phi'(\db,X) = o(1)$ or $O(1)$.
\end{proof}

\begin{proof}[Proof of Corollary \ref{multiple_forbidden_cor_bipartite}]
\textbf{Part A: Upper Bound.}
    The assumption implies the result of Theorem \ref{multiple_forbidden_theorem_bipartite}(A) holds. Letting $C(k) \coloneq \frac{\ln(1-\rho(k))}{\rho(k)}$ and using $1-x \geq e^{C(k)x}$ for $x \in [0,\rho(k)]$ gives
    \begin{align*}
       \prod_{j=1}^{m(Y)} q(\sbb,\tb,\emptygraph,L_{j-1},\ebar_j) & \geq \exp\left(C(k) \frac{m(Y)(S(\lambda_S)+T(\lambda_T))}{m(G)}\right) \\
       & \geq \exp\left(C(k) \frac{m(Y) (S(\gamma(\tb,\mb+\zb,\sbb,\zerob))+T(\gamma(\sbb,\lb + \yb, \tb, \zerob)))}{m(G)}\right),
    \end{align*}
where $\lambda_S \coloneq \frac{\sum_{i \in S} y_i \alpha_i(\tb,\mb + \zb, \sbb, \zerob)}{m(Y)}$ and $\lambda_T \coloneq \frac{\sum_{i \in T} z_i \alpha_i(\sbb,\lb + \yb, \tb, \zerob)}{m(Y)}$. The result now follows from the assumptions of the corollary.

\textbf{Part B: Lower Bound.}
 The assumption implies the result of Theorem \ref{multiple_forbidden_theorem_bipartite}(B) holds. Letting $C(k) \coloneq \frac{\ln(1-\rho(k))}{\rho(k)}$ and using $1-x \geq e^{C(k)x}$ for $x \in [0,\rho(k)]$ gives
    \begin{align*}
       \prod_{j=1}^{m(Y)} p(\sbb,\tb,\emptygraph,L_{j-1},\ebar_j) & \geq \exp\left(C(k) \frac{m(Y)(S(\eta_S)+T(\eta_T))}{m(G)}\right) \\
       & \geq \exp\left(C(k) \cdot \frac{m(Y) (S(\varDelta_{\partial Y}(\tb + \mb + \zb))+T(\varDelta_{\partial Y}(\sbb + \lb + \yb)))}{m(G)}\right),
    \end{align*}
where $\eta_S \coloneq \frac{\sum_{i \in S} y_i(s_i + l_i + y_i)}{m(Y)}$ and $\eta_T \coloneq \frac{\sum_{i \in T} z_i(t_i + m_i +z_i)}{m(Y)}$.
\end{proof}

\section{Sample Calculations}\label{section:example_calculation}
\textbf{Example 1: Degree sequence with linear maximum degree}.  
Consider the $n$-tuple $\db_n = (\lfloor \frac{n}{10} \rfloor, \lfloor \sqrt{n} \rfloor, \lfloor \sqrt{n} \rfloor, \lfloor \sqrt{n} \rfloor, \lfloor \ln n \rfloor, \lfloor \ln n \rfloor, \dots , \lfloor \ln n \rfloor)$ and $G \sim \Gs_{\db_n}(\emptygraph,\emptygraph)$. Let $X$ be a triangle between the three vertices of degree $\lfloor \sqrt{n} \rfloor$. We find $\Pa(X \subseteq G)$ using Corollary \ref{multiple_edge_cor}. Note that $D(\varDelta_{\partial X}(\db) ) = D(\lfloor \sqrt{n} \rfloor) = O(n) = o(m(G)-m(X))$ as $m(G) = \Theta(n\ln n)$. We then obtain $\Pa(X \subseteq G) \leq \frac{([ \sqrt{n} ]_2)^3}{2^3[m(G)]_3}(1+o(1))$. For the lower bound, the minimum degree in $\db-\xb$ is $\lfloor \ln n \rfloor$. Therefore, $\gamma(\db,\zerob,\db,\xb) = \frac{D(\db, \lfloor \ln n \rfloor)}{\lfloor \ln n \rfloor} = O(\frac{n}{\ln n})$. Then $D(\gamma(\db,\zerob,\db,\xb) ) = O(n) = o(m(G) - m(X))$. Further, $\varLambda(n) = o(1)$ and we obtain $\Pa(X\subseteq G) \geq \frac{([ \sqrt{n} ]_2)^3}{2^3[m(G)]_3} (1+o(1))$. This yields $\Pa(X\subseteq G) = \frac{1}{ \\ln^3 n}(1+o(1))$. Notice that the same result can not be obtained using \cite[Corollary 5]{gao22} as $D(\varDelta(\db)) = \Theta(m(G))$.

\textbf{Example 2: Example 1 Conditioned on Forbidden Edges}. Let $L$ be a spanning $r$-regular subgraph with $r = o(n)$. One can repeat the analysis in Example 1, where instead $G \sim \Gs_{\db_n}(\emptygraph,L)$ and $X \cap L = \emptygraph$. This results in the same asymptotic probability for both upper and lower bounds despite conditioning on all edges of $L$ not being present. To obtain the same result from Theorem 4 in \cite{gao22} one would need $m(L)\cdot \varDelta(\db)^2 = \Theta(r n^3) = o(m(G)^2)$ for the upper bound. However, even for $r = 1$ this does not hold.

\textbf{Example 3: Cycle of length $\lfloor \sqrt{n} \rfloor$}.
The proofs of Corollaries \ref{multiple_edge_cor} and \ref{multiple_forbidden_cor} and their bipartite counterparts are essentially a simplification of their corresponding theorems. In some cases one should stop during the simplification to obtain a more accurate result than the corollary as we now show.

Let $\db_n = (\lfloor \sqrt{n} \rfloor, \lfloor \ln{n} \rfloor, \lfloor \ln{n} \rfloor, \dots, \lfloor \ln{n} \rfloor)$ be an $n$-tuple and $X \subseteq K_n$ a cycle between the vertices in $\{1,2,\dots,\lfloor \sqrt{n} \rfloor\}$. We find an asymptotic upper bound on $\Pa(X \subseteq G)$ for $G \sim \Gs_{\db_n}(\emptygraph,\emptygraph)$. We have $D(\varDelta_{\partial X}(\db)) = D(\lfloor \sqrt{n} \rfloor) = O(\sqrt{n}\ln n) = o(m(G) - m(X))$ and the assumptions of Corollary \ref{multiple_edge_cor} are satisfied. Then one can take $\rho(n) \coloneq \frac{D(\varDelta_{\partial X}(\db)) }{m(G)-m(X)}$. However, $m(X) \cdot \rho(n) = \Omega(1)$, which implies that we can not apply the corollary to obtain the upper bound to a $1+o(1)$ factor. However, we verify as in the proof of Corollary \ref{multiple_edge_cor} the assumptions of Theorem \ref{multiple_edge_thm}(A) are satisfied. Then we can simplify the error term there into \eqref{kappa_expression}. One can verify that $\kappa \coloneq \frac{\sum_{i=j}^n x_j d_j}{2m(X)} = O(\ln n)$, which is  much less than $\varDelta_{\partial X}(\db)$. Since $m(X) D(\kappa) = O(n) = o(m(G) - m(X))$, we have that \eqref{kappa_expression} is in $1+o(1)$ as required. The asymptotic probability is then bounded by $\Pa(X \subseteq G) \leq \frac{n (\ln n)^{2 m(X) - 2} }{2^{m(X)}[m(G)]_{m(X)}}(1+o(1))$.

\textbf{Example 4: Forbidden Edges}.
Let $\db_n = (\lfloor  \frac{n}{4} \rfloor, \lfloor \sqrt{n} \rfloor, \lfloor \ln n \rfloor, \dots, \lfloor \ln n \rfloor)$ be an $n$-tuple and $G \sim \Gs_{\db_n}(\emptygraph,\emptygraph)$. Let $Y$ be a triangle between the vertices in $\{1,2,3\}$, the edges of which we will forbid. Note that $\gamma(\db,\yb,\db,\zerob) = O(\frac{n}{\ln n})$ and $D(\gamma(\db,\yb,\db,\zerob) +2) = O(n) = o(m(G))$. Corollary \ref{multiple_forbidden_cor} then gives $\Pa(Y \cap G = \emptygraph) \leq \frac{4}{5} \left(1 + \frac{\sqrt{n}}{4\ln n} \right)^{-1}(1+o(1))$. The $\frac{4}{5}$ factor comes from the edge $\{1,3\}$, and the $\left(1 + \frac{\sqrt{n}}{4\ln n} \right)^{-1}$ factor is from the edge $\{1,2\}$. Interestingly, there is no significant contribution to the asymptotic probability from the edge $\{2,3\}$ since with high probability this edge $\{2,3\}$ is not present. Further, $D(\varDelta_{\partial Y}(\db + \yb)) \leq \rho(n) \cdot m(G)$ for $\rho(n) = \frac{2}{3}$ and sufficiently large $n$. We have $m(Y) \cdot \rho(k) = O(1)$ and  $\Pa(Y \cap G = \emptygraph) \geq \frac{4}{5} \left(1 + \frac{\sqrt{n}}{4\ln n} \right)^{-1} \cdot \Omega(1)$.

\textbf{Example 5: Bipartite Case}.
Let $\sbb = (\lfloor \frac{n}{2} \rfloor, \lfloor \sqrt{n} \rfloor, \lfloor \sqrt{n} \rfloor, \dots, \lfloor \sqrt{n} \rfloor)$ be an $n$-tuple and $\tb = (\lfloor \ln n \rfloor, \lfloor \ln n \rfloor, \dots, \lfloor \ln n \rfloor)$ be a $\frac{n^{3/2}}{\ln n}(1+o(1))$-tuple. Let $G \sim \Bs_{(\sbb,\tb)}(\emptygraph,\emptygraph)$ be a random bipartite graph and $X$ a $4$-cycle between any four vertices. We have $S(\varDelta(\tb)) + T(\varDelta(\sbb)) = O(\varDelta(\sbb)\varDelta(\tb)) = O(n\ln n) = o(m(G) - m(X))$, then both the upper and lower bounds of Corollary \ref{multiple_edge_cor_bipartite} are satisfied. If $X$ contains the vertex of the largest degree in $\sbb$, then Corollary \ref{multiple_edge_cor_bipartite} yields $\Pa(X \subseteq G) = \frac{\ln^4 n }{4 n^3} \cdot (1 + o(1))$, where we used $m(G) = n^{3/2}(1+o(1))$. Otherwise, $\Pa(X \subseteq G) = \left(\frac{\ln n}{n}\right)^4(1+o(1))$. Notice that the same result can not be obtained from \cite[Theorem 3.5]{mckay81} as the condition $(\max\{\varDelta(\sbb),\varDelta(\tb)\})^2 = o(m(G) - m(X))$ is not satisfied.

\bibliographystyle{plain}

\end{document}